\numberwithin{equation}{section}
\newtheorem{theorem}[equation]{Theorem}
\newtheorem{lemma}[equation]{Lemma}
\newtheorem{cor}[equation]{Corollary}
\newtheorem{hyp}[equation]{Hypothesis}
\newtheorem{notn}[equation]{Notation}
\newproof{proof}{Proof}
\newcommand\PG{\mathsf{PG}}
\newcommand\Q{{\mathcal Q}}
\newcommand\soc{\mathsf{soc}}
\DeclareMathOperator{\Wr}{wr}
\DeclareMathOperator{\Sym}{Sym}
\renewcommand\le{\leqslant}
\renewcommand\ge{\geqslant}
\begin{document}

\begin{frontmatter}

\title{Generalised quadrangles with a group of automorphisms acting primitively on points and lines}
 
\author[uwa]{John Bamberg}
\ead{John.Bamberg@uwa.edu.au}

\author[uwa]{Michael Giudici}
\ead{Michael.Giudici@uwa.edu.au}

\author[lethbridge]{Joy Morris}
\ead{joy.morris@uleth.ca}

\author[uwa]{Gordon F. Royle}
\ead{Gordon.Royle@uwa.edu.au}

\author[uwa]{Pablo Spiga}
\ead{Pablo.Spiga@uwa.edu.au}

\address[uwa]{
   The Centre for the Mathematics of Symmetry and Computation,\\
   School of Mathematics and Statistics,\\
   The University of Western Australia,\\
   35 Stirling Highway, Crawley, W. A. 6009,  Australia}

\address[lethbridge]{
Department of Mathematics and Computer Science,\\ University of Lethbridge,\\ Lethbridge, AB. T1K 3M4. Canada}

\begin{keyword}
Generalised quadrangle, primitive permutation group
\MSC[2008]{51E12, 05B25, 20B15}
\end{keyword}


\begin{abstract}
We show that if $G$ is a group of automorphisms of a thick finite generalised quadrangle
$\mathcal{Q}$ acting primitively on both the points and lines of $\mathcal{Q}$, then $G$ is almost simple. 
Moreover, if $G$ is also flag-transitive then $G$ is of Lie type.
\end{abstract}

\end{frontmatter}

\section{Introduction}\label{section:introduction}

A (finite) \emph{generalised $d$-gon} is a finite point-line geometry whose bipartite incidence graph has diameter $d$ and girth $2d$. Despite its simplicity, this definition includes some of the most fundamental objects studied in finite geometry, including projective planes ($d=3$) and generalised quadrangles ($d=4$).
Generalised polygons were introduced by Tits \cite{Tits:1959fk} in an attempt to find geometric models for simple groups of Lie type. In particular, the group $\mathsf{PSL}(3,q)$ is admitted by the Desarguesian projective plane $\PG(2,q)$, the groups $\mathsf{PSp}(4,q)$, $\mathsf{PSU}(4,q)$, $\mathsf{PSU}(5,q)$ are admitted by certain generalised quadrangles and $\mathsf{G}_2(q)$, $\,^3\mathsf{D}_4(q)$, and $\,^2\mathsf{F}_4(q)$ arise as automorphism groups of two generalised hexagons ($d=6$) and a generalised octagon ($d=8$) respectively. We call these generalised polygons the \emph{classical} generalised polygons, and observe that in all these cases, the groups act primitively on both points and lines, and transitively on the \emph{flags} (i.e., the incident point-line pairs) of the generalised polygon. The automorphism groups of the classical generalised polygons also act \emph{distance-transitively} on the points of these geometries and Buekenhout \& Van Maldeghem \cite{Buekenhout:1994vn} showed that this property \emph{almost} characterises the classical generalised polygons. In particular, they showed that a generalised quadrangle constructed from a hyperoval in $\PG(2,4)$ (Payne \cite{Payne:1990fk}) is the only non-classical generalised polygon with a group acting distance-transitively on points.

Distance-transitivity is a very strong symmetry condition and various authors have considered the extent to which weaker (or just different) symmetry conditions characterise the classical generalised polygons. To exclude trivial cases, we require that the geometry is \emph{thick} (i.e. each line contains at least three points and each point lies on at least three lines) in which case there are constants $s$ and $t$ such that each line contains $s+1$ points, each point lies on $t+1$ lines, and the generalised polygon is said to have \emph{order} $(s,t)$. The celebrated theorem of Feit and Higman \cite{Feit:1964rt} shows that a thick generalised $d$-gon can only exist when $d \in \{2,3,4,6,8\}$, and as generalised $2$-gons are simply geometries whose incidence graph is complete bipartite, they can also be regarded as trivial.  This leaves four distinct types of generalised polygon that can be considered separately if necessary. For projective planes, it has long been conjectured (see Dembowski \cite{Dembowski:1997fk}) that even the mild condition of transitivity on points characterises the classical projective plane $\PG(2,q)$. However it is not even known whether the much stronger condition of flag-transitivity is sufficient. Higman and McLaughlin \cite{Higman:1961uq} showed that a group acting flag-transitively on a projective plane acts \emph{primitively} on both the points and lines of the plane, and Kantor \cite{Kantor:1987fr} showed that a group acting primitively on a non-classical projective plane contains a cyclic normal subgroup of prime order acting regularly on the points. Ultimately this leads to severe numerical restrictions on the possible size of the projective plane, and Thas \& Zagier \cite{Thas:2008fk} have shown that these restrictions are not satisfied by any non-classical projective plane with fewer than $4 \times 10^{22}$ points. Recently, Gill \cite{Gill:2007uq} proved that all minimal normal subgroups of a group acting transitively on a non-classical projective plane are elementary abelian.

Both Kantor's results on projective planes and Buekenhout and Van Maldeghem's characterisation of the generalised polygons with a group acting distance-transitively on points rely heavily on fundamental results regarding the structure of primitive permutation groups. While Buekenhout and Van Maldeghem show that primitivity is a necessary consequence of distance-transitivity, it is not necessarily the case that a flag-transitive group of automorphisms of a generalised polygon with $d \ge 4$ is primitive on either points or lines. However, by adding primitivity to the symmetry assumptions, Schneider and Van Maldeghem \cite{Schneider:2008zr} show that if a group acts flag-transitively, point-primitively, and line-primitively on a generalised hexagon ($d=6$) or generalised octagon ($d=8$), then it is an almost simple group of Lie type.

In this paper, we focus on the remaining case of \emph{generalised quadrangles}, first with only the primitivity assumptions, and prove the following main result:

\begin{theorem}\label{main1}
A group of automorphisms acting primitively on the points and lines of a finite thick generalised quadrangle is almost simple.
\end{theorem}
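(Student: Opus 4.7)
The plan is to apply the O'Nan--Scott theorem to the primitive action of $G$ on points and show that every non-almost-simple type yields a contradiction with the numerical constraints of a thick generalised quadrangle. Let $N$ denote the socle of $G$. Since $N$ is characteristic in $G$ and $G$ is primitive on both points and lines, $N$ acts transitively on both.

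First I would dispose of the affine (HA) case, in which $N$ is elementary abelian. Because $G$ acts faithfully on points (a non-trivial automorphism of a thick $\Q$ cannot fix every point), $N$ acts faithfully and hence, being abelian and transitive, regularly on points. The same argument applied to lines shows $N$ is regular on lines as well, so $(s+1)(st+1) = (t+1)(st+1)$, forcing $s = t$, and $v = (s+1)(s^2+1) = p^n$ for some prime $p$. Since $\gcd(s+1,\, s^2+1)$ divides $2$ and $s^2+1 = (s+1)(s-1)+2$, a short $p$-adic comparison of these two factors forces $p = 2$ and then $s \le 1$, contradicting thickness.

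We may therefore assume $N \cong T^k$ with $T$ a nonabelian simple group. If $k = 1$, then $C_G(N) \cap N = Z(N) = 1$; as $N$ is the unique minimal normal subgroup of $G$ in this case, one deduces $C_G(N) = 1$, so $G$ embeds into $\operatorname{Aut}(T)$ and is almost simple, which is the desired conclusion. When $k \ge 2$ each of the remaining O'Nan--Scott types has to be ruled out separately. In the diagonal types (HS, HC, SD, CD), the stabilizer in $N$ of a point is a full diagonal subgroup of some direct factor, so $v$ is a power of $|T|$, and likewise for the number of lines. In the twisted-wreath (TW) case $N$ acts regularly on points, giving $v = |T|^k$. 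In the product-action (PA) case the point set of $\Q$ is identified with a Cartesian power $\Omega^k$ of a smaller primitive action, so $v = |\Omega|^k$. Each of these rigid factorisations of $v$ and $\ell = (t+1)(st+1)$, combined with the Feit--Higman inequalities $s \le t^2$ and $t \le s^2$, the integrality of $st(s+1)(t+1)/(s+t)$, and explicit information on the orders of the finite simple groups, should yield a contradiction.

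The main technical obstacle I anticipate is the product-action case. There the socle has neither a regular nor a full-diagonal orbit structure on points, so one cannot pin down $v$ to a single power of $|T|$; instead one must exploit the simultaneous primitive action on lines together with the axioms of a generalised quadrangle (for instance by analysing how the Cartesian product structure on points interacts with the line set, or by examining the stabilizer within $N$ of a flag) to close the argument. The diagonal and twisted-wreath cases, by contrast, should fall from the numerical rigidity alone once the form of $v$ is known.
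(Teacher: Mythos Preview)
Your overall plan---an O'Nan--Scott case analysis on the point action---matches the paper's, and your treatment of the HA case is correct and in fact cleaner than the paper's: the paper invokes the Nagell--Ljunggren theorem (Lemma~\ref{s=t}) to handle $(s+1)(s^2+1)=\delta^k$, whereas your direct $p$-adic argument that $(s+1)(s^2+1)$ can never be a nontrivial prime power is entirely elementary.

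However, beyond HA and AS your proposal is an outline, not a proof, and the hoped-for mechanism does not match what actually works. For HS, HC, TW, SD, CD you assert that ``numerical rigidity'' (the shape of $v$ as a power of $|T|$, together with the Higman inequalities and the divisibility condition) ``should yield a contradiction''. The paper does \emph{not} proceed this way, and it is unclear that pure numerics suffice: for instance in the HS case one would have to show that $|T|\ne (s+1)(s^2+1)$ for every nonabelian simple $T$ and every $s\ge 2$, which is not obvious. Instead the paper uses Benson's theorem (Theorem~\ref{Benson}) to prove that a fixed-point-free automorphism of order $2$ or $3$ must fix a line whenever $\gcd(s,t)>1$ (Lemma~\ref{line-fixed}). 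Combined with Lemma~\ref{not-coprime}, this gives a structural tool: one exhibits an element such as $(g,1,\ldots,1)\in N$ (with $g$ an involution in $T$, via Feit--Thompson) that is semiregular on points, hence must fix a line, and then checks that no line-stabiliser in the candidate O'Nan--Scott type can contain such an element. This is the key idea you are missing for the non-PA types, and it is what reduces everything to the PA case (Lemma~\ref{lemma:onlyASPAleft}).

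Finally, you correctly identify PA as the main obstacle but offer no argument for it. In the paper this is the bulk of the work (all of Section~\ref{section:productaction}): one shows collinear points are at Hamming distance $k$ in $\Delta^k$ (Lemma~\ref{support-k}), deduces $k\le 3$ (Corollary~\ref{k-le-3}), eliminates TW/CD/SD on lines, and then in the PA--PA case constructs nested proper subquadrangles $\Q''\subsetneq\Q'\subsetneq\Q$ of orders $(s,t'')$, $(s,t')$, $(s,t)$ to force $t=s^2$ via Lemma~\ref{GQ-book-2}, which contradicts $s+1\mid t+1$. None of this is numerics on $|T|$; it is geometry of the product structure. Your proposal would need all of this added before it becomes a proof.
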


The proof of this result is based on the O'Nan-Scott theorem which classifies primitive groups into various types; there are a number of different ways to do this, but we use the classification into 8 types given by Praeger \cite{praegerbcc} which is summarised in Table~\ref{tab:primgroups}.  As the hypothesised automorphism group of the generalised quadrangle acts primitively on both points and lines, we consider the combinations of pairs of the 8 types according to the action of the group on points and lines respectively and, using a variety of techniques, eliminate all combinations except when the group is almost simple.

To obtain the analogue of Schneider and Van Maldeghem's result for generalised hexagons and generalised octagons, we must eliminate
the almost simple groups whose socle is a sporadic simple group or an alternating group. The sporadic case can be ruled out by numerical
considerations based on the degrees of the possible primitive actions, but the latter case causes more problems.   The classical generalised quadrangle of order $(2,2)$ has the symmetric group $S_6$ acting on it and, although $S_6$ is isomorphic to $\mathsf{PSp}(4,2)$ and
can therefore be viewed as a group of Lie type, the proof must admit this as a special case.  More importantly, we could not exclude
the remaining alternating groups without the additional symmetry hypothesis that the group acts \emph{flag-transitively}, although we suspect that this is needed only for the proof and not for the result itself. Using this we obtain the following result as a direct consequence of Corollary~\ref{final-cor-sporadics}, Theorem~\ref{A_n-intrans-done}, Corollary~\ref{cor:imprim} and Theorem~\ref{A_n-smalldegree-done}:

\begin{theorem}\label{theorem:alternatinggroups}\label{theorem:sporadics}
Let $G$ be a group of automorphisms of a finite thick generalised quadrangle $\mathcal{Q}$.  
\begin{enumerate}[(a)] 
\item If $G$ acts primitively on the points and lines of $\mathcal{Q}$, then
the socle of $G$ is not a sporadic simple group. 
\item  If $G$ acts flag-transitively and point-primitively on $\mathcal{Q}$ and the socle of $G$ is an alternating
group $A_n$ with $n\ge 5$, then $G \le S_6$ and $\mathcal{Q}$ is the unique generalised quadrangle of order $(2,2)$.
\end{enumerate}
\end{theorem}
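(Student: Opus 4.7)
The plan is first to apply Theorem \ref{main1}, which reduces us to the case where $G$ is almost simple with non-abelian simple socle $T$. The proof then splits according to whether $T$ is sporadic (part (a)) or an alternating group (part (b)).

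For part (a), write $v=(s+1)(st+1)$ and $b=(t+1)(st+1)$ for the numbers of points and lines of $\Q$ respectively; both must equal indices of maximal subgroups of $G$ by primitivity. The ratio $v/b=(s+1)/(t+1)$ together with the divisibility $\gcd(v,b)\ge st+1$ mean that a candidate pair of maximal-subgroup indices essentially determines $s$ and $t$. The strategy is then a finite case check: for each of the $26$ sporadic simple groups, use the ATLAS and the published lists of primitive representations to enumerate the maximal-subgroup indices of $T$ and of $\mathrm{Aut}(T)$, and verify that no pair $(v,b)$ produces integers $s,t\ge 2$ satisfying Higman's inequality $s\le t^2$ and $t\le s^2$. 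Elementary divisibility pruning keeps this manageable even for the Monster.

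For part (b), let $T=A_n$ with $n\ge 5$, let $H=G_P$ be a point stabilizer, and set $K=H\cap T$. Point-primitivity forces $H$ to be maximal in $G$, and the maximal subgroups of $A_n$ are classified by how $K$ acts on $\{1,\ldots,n\}$: intransitive, transitive imprimitive, or primitive. I would treat each of these three cases in turn. In each case one combines the structural restriction on $|K|$ (a maximum-index bound of the appropriate flavour) with the parameter equations for $v$ and $b$ and with the transitive action of $G_P$ on the $t+1$ lines through $P$ that flag-transitivity provides; these constraints together force either a contradiction or a very small value of $n$. The residual small-$n$ cases are then handled by hand, using in particular the exceptional isomorphism $A_6\cong \mathsf{PSp}(4,2)'$ to identify the surviving quadrangle as the one of order $(2,2)$ with $G\le S_6$.

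I expect the main obstacle to be the primitive case of part (b). Although a Praeger--Saxl-type polynomial bound on $|K|$, combined with the size of the line pencil through $P$, quickly forces $n$ to be small, the analysis near the boundary ($n\in\{5,\ldots,9\}$, where $A_n$ has unusually many small-index primitive subgroups and exceptional isomorphisms abound) requires a careful case-by-case inspection. Flag-transitivity is used essentially here to obtain the transitive action of $G_P$ on the lines through $P$, which is what makes the primitive case tractable; without this hypothesis one would need to replace this step with a more delicate orbit-counting argument, which explains the authors' remark that flag-transitivity is probably needed only for the proof and not for the result itself.
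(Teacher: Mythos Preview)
Your overall architecture—part (a) by enumeration of maximal-subgroup indices, part (b) by the trichotomy intransitive/imprimitive/primitive for the point-stabiliser's action on $\{1,\ldots,n\}$—matches the paper exactly, and your treatment of the sporadic case is essentially what the paper does (the one surviving candidate is the Rudvalis group at $s=t=57$, eliminated by subdegrees). Two minor remarks: invoking Theorem~\ref{main1} is superfluous, since in both parts the hypothesis already specifies the socle; and in part (b) you do not have line-primitivity, so you cannot use that $b$ is a maximal-subgroup index.

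Where your proposal diverges from the paper is in the intransitive and imprimitive subcases of (b), and here there is a genuine gap. You propose to handle all three subcases uniformly via index bounds on $|K|$ together with the pencil of $t+1$ lines through $P$. But in the intransitive and imprimitive cases the point-stabiliser is \emph{large} ($(S_k\times S_{n-k})\cap G$ or $(S_a\wr S_b)\cap G$), so an inequality of the shape $|G|<|G_\alpha|^c$ is vacuous there; this is precisely why the paper's Lemma~\ref{lemma:power6} is invoked only for the primitive subcase. The paper instead uses entirely different, and substantially harder, arguments: for the intransitive case, a direct combinatorial analysis of which $k$-subsets can be collinear (Lemmas~\ref{share-i-collinear}--\ref{A_n-bound-n}), forcing $n=6$ without ever using flag-transitivity; for the imprimitive case, the longest part of the whole proof, a nested fixed-substructure argument using the $3$-cycles $\theta_1,\theta_2$ and the subquadrangle machinery of Lemmas~\ref{GQ-book-2}--\ref{sub-GQ-order} to squeeze $n\le 36$, followed by a small case check. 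Consequently, your assessment that the primitive subcase is the main obstacle is inverted: in the paper the primitive case is dispatched quickly via Mar\'oti's bound and the $|G|<|G_\alpha|^6$ inequality (this is the \emph{only} place flag-transitivity is used), while the imprimitive case carries the bulk of the work.
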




Combining our results for generalised quadrangles with the previously mentioned
results for other generalised polygons yields the following unified result.  

\begin{cor}
A group of automorphisms acting primitively on the points and lines, and transitively on the flags of a finite thick generalised polygon is almost simple of Lie type or, possibly, acting on a non-classical projective plane with at least $4 \times 10^{22}$ points. 
\end{cor}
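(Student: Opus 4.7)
The plan is to dispatch the corollary by splitting on the gonality $d$ via the Feit-Higman theorem and invoking a previously cited result in each case. Since a thick generalised $d$-gon only exists for $d\in\{2,3,4,6,8\}$, and the case $d=2$ produces only a complete bipartite incidence graph with no structural content, I restrict attention to $d\in\{3,4,6,8\}$.

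For $d=3$, the geometry is a projective plane, and I would first note that the flag-transitivity already forces point- and line-primitivity by Higman and McLaughlin \cite{Higman:1961uq}, so our primitivity hypotheses are automatic. I would then apply Kantor's theorem \cite{Kantor:1987fr} to extract a dichotomy: either $\mathcal{Q}$ is the Desarguesian plane $\PG(2,q)$, in which case a flag-transitive subgroup of $\mathrm{P}\Gamma\mathrm{L}(3,q)$ contains $\mathsf{PSL}(3,q)$ and so is almost simple of Lie type; or $\mathcal{Q}$ is non-classical, in which case $G$ has a cyclic normal subgroup of prime order acting regularly on points (so $G$ is not almost simple), but the Thas-Zagier bound \cite{Thas:2008fk} guarantees that $\mathcal{Q}$ has at least $4\times 10^{22}$ points; this is precisely the exceptional outcome allowed in the conclusion.

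For $d=4$, Theorem \ref{main1} gives that $G$ is almost simple, and Theorem \ref{theorem:alternatinggroups} excludes both sporadic and alternating socles, the sole exception being the classical quadrangle of order $(2,2)$ on which $G\le S_6\cong\mathsf{PSp}(4,2)$, which is still of Lie type. Hence the socle is of Lie type. For $d\in\{6,8\}$, the result of Schneider and Van Maldeghem \cite{Schneider:2008zr} directly gives that $G$ is almost simple of Lie type.

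I do not anticipate any genuine obstacle: each of the four admissible gonalities is handled by an already-established result, and the argument is essentially a bookkeeping exercise verifying that these results cover every case and agree in their conclusions with the statement of the corollary. The only point warranting mild care is in the classical sub-case of $d=3$, where one must confirm that a flag-transitive subgroup of $\mathrm{P}\Gamma\mathrm{L}(3,q)$ necessarily contains $\mathsf{PSL}(3,q)$; this is a standard fact about flag-stabilisers in the Desarguesian plane and introduces no new difficulty.
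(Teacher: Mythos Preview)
Your proposal is correct and follows precisely the approach the paper intends: the paper does not give an explicit proof of this corollary, stating only that it follows by ``combining our results for generalised quadrangles with the previously mentioned results for other generalised polygons,'' and your case-split on $d\in\{3,4,6,8\}$ invoking Kantor/Thas--Zagier, Theorems~\ref{main1} and~\ref{theorem:alternatinggroups}, and Schneider--Van Maldeghem respectively is exactly that combination spelled out. The only point you flag as needing care---that a flag-transitive subgroup of $\mathrm{P}\Gamma\mathrm{L}(3,q)$ contains $\mathsf{PSL}(3,q)$---is in fact part of Kantor's theorem itself (the Desarguesian branch of his dichotomy), so no separate argument is required there.
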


There are two further known generalised quadrangles with a group acting transitively on flags, namely the 
generalised quadrangles of order $(3,5)$ and $(15,17)$ arising from transitive
hyperovals in $\PG(2,4)$ and $\PG(2,16)$ respectively. The corresponding groups act primitively on points, but imprimitively on lines.

Our proof of these results proceeds by considering all possible pairs of types of primitive action on the points and lines respectively. The types and which pairs of types can occur together are described in Section~\ref{section:background}, along with a number of other background results on generalised quadrangles. In Section~\ref{section:asorpa} we eliminate almost all possible pairs of types by using knowledge of the primitive actions, in particular semiregular elements in these actions. The results of this section lead to the conclusion that either the action is almost simple on both points and lines (i.e. the case that cannot be eliminated as it covers the classical examples) or is of type PA (product action) on either points or lines.  Section~\ref{section:productaction} is devoted to investigating the PA type in detail where, by duality, the points can be identified as elements of a Cartesian product and the group as a subgroup of a wreath product. Arguments relating collinearity to the product structure of the point-set and their numerical consequences are used to rule out all the possible types of primitive action on lines that can be paired with the action on points. By the end of this section, the only remaining possibility is that the group is almost simple, thus proving Theorem~\ref{main1}. 
The final two sections of the paper reduce the possibilities to almost simple groups of Lie type, first eliminating almost simple groups whose socle is an alternating group in Section~\ref{section:alternating} and then those whose socle is sporadic in Section~\ref{section:sporadics}. The former requires a rather long and intricate argument where a critical lemma (Lemma~\ref{lemma:power6}) bounding the size of the group needs the additional hypothesis of flag-transitivity. By contrast, Section~\ref{section:sporadics} dealing with the sporadic groups is a straightforward, though tedious, analysis based on comparing the known degrees of the primitive actions of almost simple groups whose socle is sporadic with the possible numbers of points of a generalised quadrangle. The only possibility that survives this test is that the Rudvalis group might act primitively on the points and lines of a generalised quadrangle of order $(57,57)$ but this is quickly ruled out by consideration of the subdegrees of this action.

%
%

\section{Background and definitions}\label{section:background}

\subsection{Generalised quadrangles}

The requirement that the incidence graph of a generalised quadrangle have diameter $4$ and girth $8$
is equivalent to the following definition: (i) every two points lie on at most one line, and (ii)
given a point $p$ and a line $L$, there is a unique point $q$ on $L$ that is collinear with $p$. The
second condition is equivalent to there being no triangles in the geometry.  If there is a line
containing at least three points or if there is a point on at least three lines, then there are
constants $s$ and $t$ such that each line is incident with $s+1$ points, and each point is incident
with $t+1$ lines. Such a generalised quadrangle is said to have \textit{order} $(s,t)$, and hence
its point-line dual is a generalised quadrangle of order $(t,s)$.  The number of points of a generalised quadrangle of order $(s,t)$ is $(s+1)(st+1)$. The \textit{collinearity graph} of
a generalised quadrangle has the points of the generalised quadrangle as its vertices with two vertices being adjacent if
and only if they are collinear in the generalised quadrangle. The collinearity graph is strongly regular, and
the ``Krein parameters" imply the inequality of D. Higman. We also have a divisibility
condition arising from the formula for the multiplicities of the eigenvalues of the collinearity
graph.

\begin{lemma}\label{lem:s+tdiv}
 In a thick generalised quadrangle of order $(s,t)$, we have
 \begin{enumerate}[(i)]
 \item (Higman's inequality): $s \le t^2$ and $t \le s^2$, and
 \item (Divisibility condition):  $s+t$ divides $st(s+1)(t+1)$.  
 \end{enumerate}
\end{lemma}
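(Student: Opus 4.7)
The plan is to work with the collinearity graph $\Gamma$ of $\mathcal Q$ and exploit its strong regularity, as the paragraph just before the lemma suggests. First I compute the parameters $(v,k,\lambda,\mu)$ of $\Gamma$ directly from the two defining axioms of a generalised quadrangle: $v=(s+1)(st+1)$ and $k=s(t+1)$ are immediate, while $\lambda=s-1$ follows because two collinear points lie on a unique line and there are no triangles, and $\mu=t+1$ because for $p\not\sim q$ the GQ axiom supplies exactly one neighbour of $q$ on each of the $t+1$ lines through $p$. The non-principal eigenvalues are then the roots of $\theta^2-(\lambda-\mu)\theta-(k-\mu)=0$, which factors to give eigenvalues $\theta_1=s-1$ and $\theta_2=-(t+1)$ besides the principal eigenvalue $k=s(t+1)$.

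For (ii), I would determine the multiplicity $m_1$ of $\theta_1$ from the two linear conditions $1+m_1+m_2=v$ and $k+m_1\theta_1+m_2\theta_2=\operatorname{tr}(A)=0$. Solving the resulting $2\times 2$ system yields
\[
m_1=\frac{st(s+1)(t+1)}{s+t}.
\]
Since $m_1$ is a non-negative integer as it is the multiplicity of an eigenvalue of an integer matrix, $s+t$ must divide $st(s+1)(t+1)$, which is exactly the divisibility condition.

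For (i), I would invoke the Krein bound for strongly regular graphs: the Krein parameters $q^k_{ij}$, expressible in terms of $v,k,\lambda,\mu$ and the eigenvalues, are all non-negative. Substituting $\lambda=s-1$, $\mu=t+1$, $\theta_1=s-1$, $\theta_2=-(t+1)$ into the Krein expression and simplifying leads, after some manipulation, to the inequality $s\le t^2$. The dual inequality $t\le s^2$ then follows immediately by applying the same argument to the point-line dual of $\mathcal Q$, which is a thick generalised quadrangle of order $(t,s)$.

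The main obstacle is the Krein bound step for Higman's inequality, which is less mechanical than the trace/multiplicity computation used for (ii); it requires unpacking the formal eigenmatrix machinery of strongly regular graphs. However this is entirely standard, and in fact the Krein approach is essentially Higman's original argument, so no new ideas are needed beyond correctly identifying which Krein parameter yields the desired inequality.
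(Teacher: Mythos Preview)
Your proposal is correct and follows exactly the approach the paper indicates: the paragraph preceding the lemma explicitly states that the Krein parameters of the strongly regular collinearity graph yield Higman's inequality and that the divisibility condition comes from the eigenvalue multiplicity formula, and the paper then simply cites \cite{Payne:2009ly} for these well-known facts rather than giving a proof. Your write-up fleshes out precisely this sketch, with the correct eigenvalues $s-1$ and $-(t+1)$ and the multiplicity $m_1=st(s+1)(t+1)/(s+t)$.
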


We will often refer to \cite{Payne:2009ly} for well-known results such as the above. 

A \textit{proper subquadrangle} of a generalised quadrangle $\Q$ is a generalised quadrangle $\Q' \not= \Q$ whose points and lines are subsets of the points and lines of $\Q$ with incidence being the restricted incidence relation from $\Q$.

\begin{lemma}[{\cite[2.2.2]{Payne:2009ly}}]\label{GQ-book-2}
Let $\Q$ be a generalised quadrangle of order $(s,t)$; let $\Q'$ be a proper subquadrangle of $\Q$
of order $(s,t')$, and let $\Q''$ be a proper subquadrangle of $Q'$ of order $(s, t'')$, where
$s>1$.  Then $t''=1$, $t'=s$, and $t=s^2$.
\end{lemma}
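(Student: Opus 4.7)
The plan is to combine a standard counting bound for subquadrangles sharing the same first parameter with Higman's inequality to squeeze $t''$ between $1$ and a quantity forced to be $\le 1$. The one auxiliary fact I would use, which is the preceding lemma in Payne--Thas and is the workhorse behind this statement, is the following: if $\mathcal{R}$ is a proper subquadrangle of a thick generalised quadrangle $\mathcal{S}$ of order $(s,u)$, $\mathcal{R}$ itself being of order $(s,v)$, and $s>1$, then $sv\le u$. (This is proved by a short double-counting: for a point $x\in \mathcal{S}\setminus \mathcal{R}$, the $t+1$ lines through $x$ meet $\mathcal{R}$ in at most $t'+1$ points in a controlled way, and then one compares $|\mathcal{R}|$ with the number of points of $\mathcal{S}$ that are ``near'' $\mathcal{R}$.)

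Granting that auxiliary bound, the argument is essentially three lines. First I would apply it to the pair $\Q'\subset \Q$: since both quadrangles have first parameter $s>1$, this yields $st'\le t$. Second, applying it to the pair $\Q''\subset \Q'$ (again both with first parameter $s>1$) gives $st''\le t'$. Combining the two inequalities yields
\[
s^2 t''\le s\,t'\le t.
\]

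Third, I would invoke Higman's inequality from Lemma~\ref{lem:s+tdiv}(i), namely $t\le s^2$. Together with the displayed bound this forces $s^2 t''\le s^2$, hence $t''\le 1$. Since the order of a generalised quadrangle satisfies $t''\ge 1$, we conclude $t''=1$. Finally, I would read the two inequalities backwards: $t'\ge s t''=s$ and $t\ge s t'\ge s^2$, which combined with $t\le s^2$ gives $t=s^2$ and then $t'=s$, completing the proof.

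The main obstacle, if one wants a fully self-contained argument, is establishing the auxiliary subquadrangle inequality $st'\le t$; everything after that is elementary arithmetic. In the paper this is not an obstacle at all, since the result (and its proof) can simply be cited from Payne--Thas~\cite{Payne:2009ly} along with Higman's inequality already recorded in Lemma~\ref{lem:s+tdiv}.
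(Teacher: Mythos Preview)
The paper does not actually prove this lemma; it is stated with a direct citation to Payne--Thas \cite[2.2.2]{Payne:2009ly} and no argument is given. Your proposal reproduces precisely the standard proof one finds there: apply the subquadrangle bound $st'\le t$ (Payne--Thas 2.2.1) twice along the chain $\Q''\subsetneq\Q'\subsetneq\Q$, then use Higman's inequality $t\le s^2$ to force $t''=1$ and read off $t'=s$, $t=s^2$. So your approach is correct and matches the source the paper cites.

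One small remark worth making explicit: when you apply the auxiliary bound to $\Q''\subset\Q'$, you need $\Q'$ to be thick, i.e.\ $t'\ge 2$. This is automatic, since $\Q''$ being a \emph{proper} subquadrangle of $\Q'$ with the same first parameter $s$ forces $t''<t'$, hence $t'\ge 2$; but it does no harm to say so. Also note that the specific inequality $st'\le t$ you use is part of Payne--Thas 2.2.1 but is not the part recorded in the paper's Lemma~\ref{sub-GQ-order} (which only states the alternative $s'=s$ or $s't'\le s$), so you are right to flag it as an external input.
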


\begin{lemma}[{\cite[2.4.1]{Payne:2009ly}}]\label{sub-GQ-theta}
Let $\theta$ be an automorphism of the generalised quadrangle $\Q$.  The substructure of the fixed
points and lines of $\theta$ must have one of the following forms:
\begin{enumerate}[(i)]
\item no lines;
\item no points;
\item all of the lines pass through one distinguished point;
\item all of the points lie on one distinguished line;
\item a grid (every point lies at the intersection of exactly two lines);
\item a dual grid (every line contains exactly two points); or
\item a subquadrangle of order $(s', t')$ with $s' \ge 2$ and $t' \ge 2$.
\end{enumerate} 
\end{lemma}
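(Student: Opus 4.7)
The plan is to analyse the fixed substructure $\mathcal{F}$ of $\theta$ using two simple closure properties. First, since two distinct points lie on at most one line of $\Q$, any line joining two fixed points is itself fixed; dually, the intersection (if it exists) of two fixed lines is a fixed point. Second, and more crucially, if $p$ is a fixed point and $L$ is a fixed line with $p\notin L$, then the unique point $q\in L$ collinear with $p$ is fixed (since $\theta$ preserves $L$ and the relation ``collinear with $p$''), and hence so is the line $pq$. These two observations propagate the fixed structure via ``perpendiculars'' between fixed points and fixed lines.

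Cases (i) and (ii) cover the situations where $\mathcal{F}$ has no fixed lines or no fixed points, so assume both are present. If every fixed line passes through a single common point we are in case (iii), and dually if every fixed point lies on a single common line we are in case (iv); these conclusions also absorb the degenerate subcases in which there is only one fixed line or only one fixed point. Otherwise there exist fixed lines $L_1,L_2$ which share no common point: if they met at a point, that point would be fixed by the first closure property, and then the hypothesis that not all fixed lines pass through a single point would give a third fixed line producing a triangle, contradicting the GQ axiom. Hence $L_1,L_2$ are ``skew''. Applying the perpendicular construction, each fixed point on $L_1$ produces a fixed point on $L_2$ collinear with it and a fixed transversal; by symmetry $L_1$ and $L_2$ carry the same number of fixed points and are joined by that many fixed transversals, giving a grid-like configuration inside $\mathcal{F}$.

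I would then trichotomise on the number of fixed lines through a fixed point of this configuration. If every fixed point lies on exactly two fixed lines, $\mathcal{F}$ is a grid and we are in case (v); dually, if every fixed line contains exactly two fixed points, we are in case (vi). Otherwise, some fixed point $p$ has a third fixed line $M$ through it (or dually). Iterating the perpendicular construction of $M$ against the fixed transversals of the grid extends $\mathcal{F}$, and a bookkeeping argument shows that each fixed line carries a constant number $s'+1\ge 3$ of fixed points and each fixed point lies on a constant number $t'+1\ge 3$ of fixed lines. The GQ axiom for $\Q$ applied within $\mathcal{F}$, combined with the perpendicular closure, shows that $\mathcal{F}$ is itself a generalised quadrangle of order $(s',t')$, giving case (vii). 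The main obstacle is this last step: one must check that the closure of the initial grid under repeated perpendicular operations yields uniform parameters $(s',t')$ everywhere in $\mathcal{F}$, and that no partial sub-configurations violating the GQ axioms can appear --- this requires careful use of the fact that ``collinear with $p$ on $L$'' selects a unique point, to rule out the accidental fixing of extra points or lines.
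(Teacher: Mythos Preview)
The paper does not prove this lemma; it is quoted from Payne and Thas's monograph (their 2.4.1) and used as a black box throughout. Your sketch is essentially the classical argument given there: exploit the two closure properties of the fixed substructure $\mathcal{F}$ (the line through two fixed points is fixed, and the foot on a fixed line of a fixed point off it is fixed) to see that $\mathcal{F}$ itself satisfies the GQ axiom, and then classify the resulting degenerate and non-degenerate geometries.

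Your argument for producing a pair of skew fixed lines once cases (i)--(iv) are excluded is correct (if every two fixed lines met, three of them not all concurrent would force a triangle), as is the transversal-grid construction from such a pair. The one place where your outline is thinner than the textbook treatment is the final uniformity step: showing that \emph{every} fixed line carries the same number $s'+1$ of fixed points, and dually that every fixed point lies on $t'+1$ fixed lines, requires comparing an arbitrary fixed line or point against the initial grid via iterated foot-dropping, and this is where Payne and Thas spend most of the effort you label ``a bookkeeping argument''. There is no genuine gap in your plan, but that step is not quite as automatic as the phrase suggests.
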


\begin{lemma}[{\cite[2.2.1]{Payne:2009ly}}]\label{sub-GQ-order}
Let $\Q$ be a generalised quadrangle of order $(s,t)$, and let $\Q'$ be a proper subquadrangle of
$\Q$ of order $(s', t')$ (so $\Q' \neq \Q$).  Then either $s'=s$, or $s't' \le s$.
\end{lemma}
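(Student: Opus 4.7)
The plan is to assume the nontrivial case $s' < s$ and derive $s't' \le s$ by a counting argument anchored at a carefully chosen point of $\Q$ outside $\Q'$. Since $s' < s$, each line of $\Q'$ must contain at least one point of $\Q$ not in $\Q'$, so I can fix a line $L$ of $\Q'$ and a point $x \in L$ with $x$ not a point of $\Q'$.

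First I would establish two structural facts about $x$. Fact (a): $x$ is collinear in $\Q$ with exactly the $s'+1$ points of $L$ that lie in $\Q'$, and with no other points of $\Q'$. The argument is a standard no-triangle one: if $z$ were a point of $\Q'$ off $L$ collinear with $x$ in $\Q$, then applying the GQ axiom to $z$ and the line $L$ in both $\Q$ and $\Q'$ would force the unique point of $L$ collinear with $z$ in $\Q$ to be simultaneously $x$ and a point of $\Q'$, contradicting $x \notin \Q'$. Fact (b) is an immediate consequence of (a): $x$ lies on a unique line of $\Q'$, namely $L$, since a second such line would give $\Q'$-points collinear with $x$ outside $L \cap \Q'$.

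Next, for each of the $s't'^2$ lines $N$ of $\Q'$ disjoint from $L$ in $\Q'$, I would define $n_N$ as the unique point of $N$ collinear with $x$ in $\Q$; by (a), $n_N \in \Q \setminus \Q'$, and by (b) the assignment $N \mapsto n_N$ is injective because each point of $\Q \setminus \Q'$ lies on at most one line of $\Q'$. Each $n_N$ lies on one of the $t$ lines through $x$ other than $L$, and for two such points $n_{N_1}, n_{N_2}$ on a common line $M$ through $x$ the forbidden-triangle property forces $N_1$ and $N_2$ to be disjoint in $\Q'$. A local count via the GQ axiom, pairing each of the $(s'+1)s't'$ points of $\Q' \setminus L$ with its unique collinear point on $M$ in $\Q$, then caps the number of $n_N$'s lying on any single such line $M$ by $s't'$.

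The main obstacle is promoting these local bounds to the sharp global inequality $s't' \le s$: na\"ive pigeonholing of $\sum_M a_M = s't'^2$ against $a_M \le s't'$ over $t$ lines yields only the weaker $s't'^2 \le st$. Following the argument in \cite[2.2.1]{Payne:2009ly}, the sharper conclusion is extracted by combining the partial-spread bound $s't'+1$ in $\Q'$ with the additional constraint that the lines of $\Q'$ through the $n_N$'s on a fixed $M$, together with $L$ itself, must form a partial spread in $\Q'$ of size $a_M + 1$; a careful analysis of saturation of this bound across the configuration of lines through $x$ then forces $s't' \le s$.
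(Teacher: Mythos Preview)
The paper does not prove this lemma; it is quoted verbatim from \cite[2.2.1]{Payne:2009ly} without argument. So there is no ``paper's own proof'' to compare against---only the standard Payne--Thas argument that the citation points to.

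Your setup is correct and matches the opening moves of that standard argument: the choice of $x\in L\setminus P'$, facts (a) and (b), the map $N\mapsto n_N$, its injectivity, and the bound $a_M\le s't'$ obtained by projecting $P'\setminus L$ onto $M$ are all sound. The problem is the final paragraph. You correctly observe that your local bounds only yield $s't'^2\le st$, and then write that the sharp conclusion follows from ``the partial-spread bound'' plus ``a careful analysis of saturation''. But the partial-spread observation---that $\{N:n_N\in M\}\cup\{L\}$ is a partial spread of $\Q'$ of size $a_M+1\le s't'+1$---is \emph{exactly} the inequality $a_M\le s't'$ you already had; it contributes nothing new. And ``careful analysis of saturation'' is not an argument. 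With only $\sum_M a_M=s't'^2$, $a_M\le s't'$, and $a_M\le s$ over $t$ lines, you genuinely cannot conclude $s't'\le s$: these constraints are consistent with $a_M$ being small on every line when $t$ is large relative to $t'$.

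The missing idea is that one must control the \emph{external} points on a line through $x$ (those $y\in M\setminus\{x\}$ lying on no line of $\Q'$), and in particular the size of $y^\perp\cap P'$ for such $y$. Your projection count actually gives the exact equation $\sum_{y\in M\setminus\{x\}}|y^\perp\cap P'|=(s'+1)s't'$, with each case-(b) point contributing exactly $s'+1$; the crux is what the external points contribute. The Payne--Thas proof handles this (in effect showing that for an external point the set $y^\perp\cap P'$ is an ovoid of $\Q'$, hence of size exactly $s't'+1$), and it is precisely this structural fact---not any ``saturation'' of the partial-spread bound---that closes the gap. As written, your proposal stops at the point where the real work begins.
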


The following general calculations will prove useful.

\begin{lemma}\label{useful}
Let $\Q$ be a thick generalised quadrangle of order $(s,t)$ that has $v$ points, and let $\Q'$ be a proper
subquadrangle of $\Q$ of order $(s', t')$ that has $v'$ points.  Then the following hold:
\begin{enumerate}[(i)]
\item $v < (t+1)^5$;
\item $v < (s+1)^4$;
\item $v > s^{5/2}$; and
\item if $s't' \le s$ and $s' \neq s$ then $v/v' >t$.
\end{enumerate}
\end{lemma}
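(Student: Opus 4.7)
The plan is to substitute the formula $v = (s+1)(st+1)$ into each of the four inequalities and use Higman's inequality from Lemma~\ref{lem:s+tdiv}(i) as the only nontrivial input. For part (i), I would apply $s \le t^2$ (so $st \le t^3$) to bound $v \le (t^2+1)(t^3+1)$; expanding $(t+1)^5 - (t^2+1)(t^3+1)$ gives a polynomial in $t$ with all positive coefficients (of the form $t(5t^3+9t^2+9t+5)$), which is strictly positive for the range of $t$ in question. Part (ii) is entirely dual: apply $t \le s^2$ to bound $v \le (s+1)(s^3+1)$, and then reduce the target inequality $v < (s+1)^4$ to the evident $s^3 + 1 < (s+1)^3$, which follows immediately from the binomial expansion.

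For part (iii), the crude lower bound $v = (s+1)(st+1) > s \cdot st = s^2 t$ (strict since both factors are increased) combined with Higman's inequality in the form $t \ge \sqrt{s}$ immediately yields $v > s^2\cdot\sqrt{s} = s^{5/2}$. For part (iv), the hypothesis $s't' \le s$ together with $t' \ge 1$ forces $s' \le s$, and since $s' \ne s$ we have $s - s' \ge 1$. Then $v' = (s'+1)(s't'+1) \le (s'+1)(s+1)$ gives
\[
\frac{v}{v'} \ge \frac{(s+1)(st+1)}{(s'+1)(s+1)} = \frac{st+1}{s'+1},
\]
and rearranging $(st+1)/(s'+1) > t$ reduces to $(s-s')t > t-1$, which is immediate from $s - s' \ge 1$.

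There is no serious obstacle here; the arguments are all elementary manipulations. The only care required is in applying Higman's inequality in the correct direction for each part, in choosing the right crude bound (tightening for upper bounds in (i) and (ii), loosening for the lower bound in (iii)), and in confirming that the inequalities remain strict through each substitution.
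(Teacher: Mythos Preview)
Your proposal is correct and follows essentially the same route as the paper's proof: Higman's inequality is the only substantive input, and each part is handled by the same elementary bounds (for (iv) the paper uses $s'+1\le s$ to reach $(st+1)/s>t$, while you rearrange $(st+1)/(s'+1)>t$ to $(s-s')t>t-1$, which is an equivalent one-line variant).
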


\begin{proof}
Using Higman's inequality (\ref{lem:s+tdiv}(i)) we have $v=(s+1)(st+1) \le (t^2+1)(t^3+1) <(t+1)^5$; similarly,
$v=(s+1)(st+1) \le (s+1)(s^3+1)<(s+1)^4$.  Also $v=(s+1)(st+1) >s^2\sqrt{s}=s^{5/2}$.  For (iv), we
have $v/v'=(s+1)(st+1)/((s'+1)(s't'+1)) \ge (st+1)/s>st/s=t$, since $s't' \le s$ and $s'<s$.
\qed\end{proof}

\subsection{Group actions}
 
We will assume the reader is familiar with the basic notions of group actions such as transitivity
and primitivity. Details can be found in \cite{DM}. A \textit{subdegree} of a transitive group is
the length of an orbit under the stabiliser of a point.

Finite primitive permutation groups are characterised by the O'Nan-Scott Theorem. We will follow the
description given in \cite{praegerbcc} which splits the primitive groups into eight types. The types
are distinguished by the structure and action of the minimal normal subgroups.  Table
\ref{tab:primgroups} provides a description of each type. The information given in the third column
is sufficient to identify a primitive group of the corresponding type but is not necessarily a
complete account of the known information about such a group. The fact that $k\geq 6$ in the TW case follows from \cite[Theorem 4.7B (iv)]{DM} and requires the Classification of Finite Simple Groups.

The \emph{socle} of a group is the product of all of its minimal normal subgroups. Let $N= T^k$ for
some finite nonabelian simple group $T$ and let $\pi_i$ be the projection map from $N$ onto the
$i^{\rm th}$ direct factor of $N$.  A subgroup $M$ of $N$ is called a \emph{full strip} if $M\cong T$ and, for each $i\in\{1,\ldots,k\}$,
either $\pi_i(M)\cong T$ or $\{1\}$. The \emph{support} of a strip is the subset
$I$ of all $i\in \{1,\ldots,k\}$ such that $\pi_i(M)\cong T$. Note that a nontrivial element $g$ of
a strip with support $I$ satisfies $\pi_i(g)\ne 1$ for all $i\in I$. The \emph{length} of a strip
is the cardinality of its support and two strips are said to be \emph{disjoint} if their supports are
disjoint. A \emph{full diagonal subgroup of $N$} is a full strip of length $k$.

\begin{center}
 \begin{table}
  \begin{tabular}{c|l|p{8cm}}
  Abbreviation & O'Nan-Scott type & Description\\ \hline\hline 
  HA & Affine & The unique minimal
  normal subgroup is elementary abelian and acts regularly on $\Omega$.\\ 
  HS & Holomorph simple & Two minimal normal subgroups: each nonabelian simple and regular.\\ 
  HC & Holomorph compound & Two
  minimal normal subgroups: each isomorphic to $T^k$, $k\ge 2$ and regular.\\ 
  AS & Almost simple & The unique minimal normal subgroup is nonabelian simple.\\ 
  TW & Twisted wreath & The unique minimal normal subgroup is regular and isomorphic to $T^k$ with $k\ge 6$.\\ 
  SD & Simple diagonal  & The unique minimal normal subgroup $N$ is isomorphic to $T^k$, with $k\ge 2$ and $N_{\alpha}$
  is a full diagonal subgroup of $N$. The group $G$ acts primitively on the set of $k$ simple direct factors of $N$. \\ 
  CD & Compound diagonal & The unique minimal normal subgroup $N$ is isomorphic to $T^k$ and 
  $N_{\alpha}\cong T^{\ell}$ is a direct product of $\ell$ pairwise disjoint full strips of length $k/\ell$ with $\ell,k/\ell\ge 2$. 
  The set of supports of the full strips forms a system of imprimitivity for the action of $G$ on the set of $k$ simple direct factors of $N$.\\ 
  PA & Product action & The unique minimal normal subgroup $N$ is isomorphic to $T^k$, with $k\ge 2$
  and $N_{\alpha}\cong R^k$ for some proper nontrivial subgroup $R$ of $T$.\\ \hline
  \end{tabular}
\caption{Types of primitive groups $G$ on a set $\Omega$. Here $T$ is a nonabelian simple group and $\alpha\in\Omega$.}
\label{tab:primgroups}
 \end{table}
\end{center}

In some of the O'Nan-Scott types, the distinguishing features of the type are purely group theoretic
and do not require properties of their actions. This allows us to prove the following lemma about
groups with two different primitive actions.

\begin{lemma}
\label{lem:2actions}
Let $G$ be a group with faithful primitive actions on the sets $\Omega_1$ and $\Omega_2$. Then the
possible O'Nan-Scott types of these actions are given by Table $\ref{tab:2primactions}$.
\end{lemma}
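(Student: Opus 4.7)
The plan is to extract, for each O'Nan--Scott type in Table~\ref{tab:primgroups}, those features of $G$ that depend only on $G$ as an abstract group (independent of its primitive action), and then to observe that two faithful primitive actions of $G$ must share these invariants. First I would note that $\soc(G)$, together with its set of minimal normal subgroups and their isomorphism types, is intrinsic to $G$. Three structural dichotomies then separate most of the eight types: whether $\soc(G)$ is abelian (characterising HA); whether $G$ has exactly two minimal normal subgroups (characterising HS and HC, which are in turn distinguished by whether each of these subgroups is simple or isomorphic to $T^k$ with $k\ge 2$); and whether $G$ has a unique nonabelian simple minimal normal subgroup (characterising AS). These invariants force HA, AS, HS and HC each to combine only with themselves in any pair of faithful primitive actions.

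This reduces attention to the four types TW, SD, CD, PA, all of which have a unique nonabelian minimal normal subgroup $N\cong T^k$ with $k\ge 2$. Here I would use the further intrinsic fact that the conjugation action of $G$ on the set $\Delta$ of $k$ simple direct factors of $N$ is determined by $G$ itself (via $G\to \mathrm{Aut}(N)\to \Sym(\Delta)$). Reading Table~\ref{tab:primgroups}, SD requires this outer action to be primitive while CD requires it to be imprimitive, with the specific block system given by the supports of the strips; hence SD and CD cannot co-occur for the same $G$. By contrast, PA and TW only demand transitivity on $\Delta$, so either of them can in principle pair with SD (when the outer action is primitive) or with CD (when it is imprimitive), and with each other, subject always to the intrinsic constraints $k\ge 6$ and that $N$ acts regularly, which are needed for TW.

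The step I expect to be the main obstacle is verifying that every pair appearing in Table~\ref{tab:2primactions} is actually realisable (equivalently, that no subtler intrinsic invariant rules it out), and that each excluded pair really fails some intrinsic test. The delicate point is that the regularity of $N$ (needed for TW) depends on the action, so a group $G$ with a TW action on $\Omega_1$ could in principle also admit a non-regular primitive action on $\Omega_2$ of type SD, CD or PA; one must check that the twisted-wreath structure of $G$ on $\Omega_1$ is compatible with the required stabiliser (full diagonal, product of full strips, or $R^k$) on $\Omega_2$, while respecting the primitivity or imprimitivity of the outer action on $\Delta$. This is the bookkeeping core of the argument; once it is carried out, the list of surviving pairs in Table~\ref{tab:2primactions} follows directly.
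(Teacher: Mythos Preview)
Your argument for the exclusions is essentially the paper's: you use the intrinsic invariants of $G$ (number and isomorphism type of the minimal normal subgroups) to pin down HA, HS, HC and AS individually, and then for the remaining four types you use the primitivity or imprimitivity of the conjugation action of $G$ on the simple direct factors of $N$ to separate SD from CD. This is exactly how the paper proceeds.

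Where you diverge is in the final paragraph. The paper does \emph{not} verify that every pair listed in Table~\ref{tab:2primactions} is actually realised by some group $G$; it reads the lemma as asserting only the necessary constraints on the pair of types, which is all that is needed downstream (the lemma is invoked solely to eliminate combinations, never to produce examples). So your anticipated ``main obstacle'' and the ``bookkeeping core'' about checking compatibility of TW with SD, CD, PA stabilisers is simply not part of the proof: once HA, HS, HC, AS are forced to pair only with themselves and the SD/CD combination is excluded, the paper declares the table complete. You may drop the realisability discussion entirely.
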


\begin{table}[H]
\begin{center}
\begin{tabular}{c | l }
\hline
Primitive type on $\Omega_1$ & Primitive type on $\Omega_2$ \\
\hline
HA & HA\\
HS &HS \\
HC & HC \\
AS & AS \\
TW & TW, SD, CD PA\\ 
SD & TW, SD, PA\\
CD & TW, CD, PA\\
PA & TW, SD, CD, PA\\
\hline
\end{tabular}
\caption{Two primitive actions}
\label{tab:2primactions}
\end{center}
\end{table}

\begin{proof}
Suppose that $G$ acts primitively on $\Omega_1$ with O'Nan-Scott type HA, HS, HC or AS. Then as each
of these types has a group theoretic structure that uniquely determines its primitive type, the action 
on $\Omega_2$ must be of the same type. A primitive group of each of the remaining four types has a unique minimal
normal subgroup $N$, which is isomorphic to $T^k$ where $T$ is a nonabelian simple group $T$ and $k\ge
2$. If $G$ is of type SD then it acts primitively on the set of $k$ simple direct factors of $N$,
while a group of type CD acts imprimitively on this set. Thus $G$ cannot have both a primitive
SD action and a primitive CD action. This completes the last four rows of Table
\ref{tab:2primactions}.  \qed\end{proof}

\section{Some basic deductions from a primitive action on a generalised quadrangle}\label{section:asorpa}

We begin our analysis by deducing what follows from the simple premise of a group acting on the
points of a generalised quadrangle. Some of these results follow immediately from elementary number
theoretic considerations.

\begin{lemma}\label{s=t}
If $\Q$ is a thick generalised quadrangle of order $(s,t)$ with $s=t$ and the number of points is equal
to $\delta^k$ with $k\geq 2$, then $s=7$, $\delta=20$ and $k=2$.
\end{lemma}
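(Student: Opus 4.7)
The plan is to analyse the Diophantine equation $(s+1)(s^2+1)=\delta^k$ by splitting into cases on the parity of $s$ and the value of $k$. First, setting $v=(s+1)(s^2+1)$, we have $(s+1)^3-v=2s(s+1)>0$ while $v>s^3$, so $v$ lies strictly between consecutive cubes and is not a cube, ruling out $k=3$. Next, any odd prime $p$ dividing $s+1$ gives $s^2+1\equiv 2\pmod p$, so $\gcd(s+1,s^2+1)\in\{1,2\}$. If $s$ is even the two factors are coprime, forcing each to be a $k$-th power; but $s^2+1$ lies strictly between consecutive squares, so $k=2$ is ruled out, and for $k\ge 4$ the equation $s^2+1=b^k$ has no positive solution (by Mihailescu's theorem, or by elementary factorisation when $k$ is even and a descent in $\mathbb{Z}[i]$ when $k$ is odd). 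Hence $s$ must be odd.

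For $s$ odd, write $s+1=2^a m$ and $s^2+1=2n$ with $m,n$ odd and coprime. The equation $v=2^{a+1}mn=\delta^k$ forces $k\mid a+1$ and both $m$ and $n$ to be $k$-th powers of odd integers; in particular $s^2+1=2n_0^k$. For $k=4$, this is Ljunggren's equation $X^2-2Y^4=-1$, whose only positive-integer solutions are $(s,n_0)=(1,1)$ and $(239,13)$: the first gives a non-thick quadrangle, and the second has $v_2(s+1)=4$, so the divisibility $k\mid a+1=5$ fails. For $k\ge 5$, a descent in $\mathbb{Z}[i]$ applied to $(s+i)(s-i)=(1+i)(1-i)n_0^k$ writes $s+i=(1+i)\,u\,\gamma^k$ for some unit $u$ and $\gamma\in\mathbb{Z}[i]$; comparing imaginary parts yields a polynomial identity of the shape $P_k(a,b)+Q_k(a,b)=\pm 1$ in the coordinates $\gamma=a+bi$, which (as one checks case by case, using the factorisation of $P_k+Q_k$ forcing $|a+b|=1$) admits only the solutions producing $s=1$, again non-thick.

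The remaining and most delicate case is $k=2$. Since $k\mid a+1$, the exponent $a$ must be odd, so $(s+1)/2=2^{a-1}m_0^2=(2^{(a-1)/2}m_0)^2$ is automatically a perfect square, say $e^2$; hence $s=2e^2-1$. Substituting into $s^2+1=2n_0^2$ gives the quartic $n_0^2=2e^4-2e^2+1$, equivalent to the negative Pell equation $(2e^2-1)^2-2n_0^2=-1$ with the side condition that $2e^2-1$ itself be a Pell solution. Using the recurrence $X_{i+1}=6X_i-X_{i-1}$ for the positive solutions of $X^2-2Y^2=-1$, one shows by modular arguments (for instance, by reducing $X_i+1$ modulo $9$, $16$ and $5$ in turn) that $X_i+1$ is twice a square only for $i\in\{1,2\}$; these give $e=1$ (so $s=1$, not thick) and $e=2$ (so $s=7$, $n_0=5$, $\delta=20$) respectively. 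The main obstacle of the whole argument is precisely this Ljunggren-type determination of integer points on the quartic $n_0^2=2e^4-2e^2+1$; completing the modular descent is elementary but tedious, and the analogous job at $k=4$ requires the invocation of Ljunggren's classical theorem.
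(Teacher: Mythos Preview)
The paper's proof is a one-line citation: writing $v=(s+1)(s^2+1)=(s^4-1)/(s-1)$, the Nagell--Ljunggren theorem on perfect powers of the form $(x^n-1)/(x-1)$ immediately yields $s=7$, $\delta=20$, $k=2$ as the unique nontrivial solution.

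Your route is genuinely different: you attempt to establish this special case of Nagell--Ljunggren from first principles via parity splitting and case analysis on $k$. Several pieces are correct and nicely done --- the cube-sandwich ruling out $3\mid k$, the coprimality argument when $s$ is even, and the $k=4$ reduction to Ljunggren's $X^2-2Y^4=-1$ together with the check that $s=239$ violates $4\mid a+1$. However, two steps are not actually proved.

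First, for $s$ odd and $k\geq 5$ the $\mathbb{Z}[i]$ descent you sketch does force a linear relation $|a\pm b|=1$ (the sign depends on $k\bmod 4$, not always $a+b$ as written), but one must then show the cofactor $R_k(a,b)=\pm 1$ has only the trivial solutions. Saying ``one checks case by case'' is not a proof when there are infinitely many $k$; you need either a uniform argument or a citation (St\"ormer's 1899 result on $x^2+1=2y^n$ for odd $n\geq 3$ is what is required here). Second, at $k=2$ your reduction to $n_0^2=2e^4-2e^2+1$ is correct, but the assertion that congruences modulo $9$, $16$, $5$ settle it is not carried out and is unlikely to succeed as stated: this is a genus-one problem, and Nagell's 1920 determination that $e\in\{1,2\}$ uses arithmetic in $\mathbb{Z}[\sqrt{2}]$ rather than congruences alone. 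Since you already invoke Ljunggren's theorem at $k=4$, the cleanest and honest repair is simply to cite Nagell--Ljunggren for the full statement --- which is precisely what the paper does.
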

\begin{proof}
Since $s=t$, the number of points is $v=(s+1)(st+1)=(s+1)(s^2+1)=(s^4-1)/(s-1)$.  
By the results of Nagell and Ljunggren (see \cite{Ljunggren:1943qf}), if $v=\delta^k$ with $k\geq 2$ then $s=7$, $\delta=20$ and $k=2$.  \qed\end{proof}

\begin{lemma}\label{not-coprime}
Let $(s,t)$ be the parameters of a thick generalised quadrangle.  If $s+1$ divides $t+1$, or $t+1$ divides
$s+1$, then $s$ and $t$ are not coprime.
\end{lemma}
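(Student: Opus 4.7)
The plan is to use the divisibility condition from Lemma~\ref{lem:s+tdiv}(ii), namely $s+t \mid st(s+1)(t+1)$, combined with the hypothesis that $s+1 \mid t+1$ to produce a contradiction from assuming $\gcd(s,t)=1$. By point-line duality, it suffices to handle the case $s+1 \mid t+1$; write $t+1 = m(s+1)$, so $t = ms+m-1$. If $m=1$ then $s=t \ge 2$ and $\gcd(s,t)=s \ge 2$, so we may assume $m \ge 2$.

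Now assume for contradiction that $\gcd(s,t)=1$. Since $\gcd(s+t,s) = \gcd(t,s) = 1$ and similarly $\gcd(s+t,t) = 1$, we obtain $\gcd(s+t, st) = 1$, so the divisibility condition reduces to $s+t \mid (s+1)(t+1)$. Working modulo $s+t$ and using $t \equiv -s$, we get $(s+1)(t+1) \equiv st+1 \equiv 1-s^2 \pmod{s+t}$, so
\[
s+t \;\Big|\; (s-1)(s+1).
\]

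Next I would compute $\gcd(s+t, s+1)$. Since $s+1 \mid t+1$ gives $t \equiv -1 \pmod{s+1}$, we have $s+t \equiv s-1 \equiv -2 \pmod{s+1}$, so $\gcd(s+t, s+1)$ divides $2$. If this gcd equals $1$, then $s+t$ must divide $s-1$, which is absurd since $s+t \ge s+2 > s-1$. So $\gcd(s+t,s+1) = 2$; in particular $s+t$ is even and (since $s+1$ is even) $s$ is odd. Set $u = (s-1)/2$ and $v = (s+1)/2$, so $\gcd(u,v)=1$, and write $s+t = 2D'$. From $(s-1)(s+1) = 4uv$ and $2D' \mid 4uv$ I get $D' \mid 2uv$, while $\gcd(2D', 2v) = 2\gcd(D',v) = 2$ forces $\gcd(D',v)=1$, hence $D' \mid 2u = s-1$. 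This yields $s+t = 2D' \le 2(s-1)$, i.e.\ $t \le s-2$.

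The final contradiction is then immediate: $m \ge 2$ gives $t = ms+m-1 \ge 2s+1$, incompatible with $t \le s-2$. The only non-routine step is the careful gcd bookkeeping in the case $\gcd(s+t,s+1)=2$, which is the natural obstacle because the two factors $s-1$ and $s+1$ of $s^2-1$ share a factor of $2$ whenever $s$ is odd; extracting that shared factor cleanly is what makes the bound $s+t \le 2(s-1)$ work. The dual case $t+1 \mid s+1$ follows by interchanging $s$ and $t$.
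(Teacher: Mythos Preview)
Your proof is correct and shares with the paper the opening reduction: assuming $\gcd(s,t)=1$, one gets $\gcd(s+t,st)=1$, and the divisibility condition yields $s+t \mid (s-1)(s+1)$. From that point the two arguments diverge. You analyse $\gcd(s+t,s+1)$ directly, split on whether this gcd is $1$ or $2$, and in the even case carefully peel off the shared factor of $2$ from $s-1$ and $s+1$ to obtain $s+t \le 2(s-1)$, contradicting $t \ge 2s+1$. The paper instead avoids any parity split: from $s+t \mid (s-1)(s+1)$ and $s+1 \mid t+1$ it immediately gets $s+t \mid (s-1)(t+1)$, and subtracting this from $s+t \mid st+1$ gives $s+t \mid 2(t+1)$; since $s>1$ forces $s+t > t+1$, the only possibility is $s+t = 2(t+1)$, i.e.\ $s=t+2$, which makes $s+1 \mid t+1 = s-1$ absurd. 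The paper's route is a little slicker (no cases, no auxiliary variables $u,v,D'$), while yours makes the obstruction coming from the common factor $2$ of $s\pm 1$ completely explicit; both land on the same numerical contradiction $t \le s-2$ versus $t+1 \ge 2(s+1)$.
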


\begin{proof}
Suppose that $s$ and $t$ are coprime and, by taking duals if necessary, that $s+1$ divides $t+1$.  
First note that $s+t$ and $st$ are also coprime and so by Lemma \ref{lem:s+tdiv}, $s+t$ divides $st+1$.  Now
$$st+1=s(s+t)-s^2+1,$$ therefore, $s+t$ divides $(s-1)(s+1)$. Since
$s+1$ divides $t+1$, the latter implies that $s+t$ divides $(s-1)(t+1)$.  So $s+t$
divides $(st+1)-(s-1)(t+1)=2+t-s=2(t+1)-(s+t)$, and hence, $s+t$ divides $2(t+1)$.
Now $s>1$ and so $s+t>t+1$. So $s+t=2(t+1)$, and thus $s=t+2$.  This is a contradiction (as $s+1$
would divide $s-1$).  \qed\end{proof}

\begin{theorem}[Benson's Theorem \cite{Benson:1970ve}]\label{Benson}
Let $\mathcal{Q}$ be a generalised quadrangle of order $(s,t)$ and let $\theta$ be an automorphism
of $\mathcal{Q}$. Let $f$ be the number of fixed points of $\theta$ and let $g$ be the number of
points $x$ for which $x^\theta$ is collinear with $x$. Then
$$(1+t)f +g\equiv st+1 \pmod{s+t}.$$
\end{theorem}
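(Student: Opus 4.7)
The plan is to translate the statement into linear algebra on the collinearity graph of $\Q$. Let $A$ be its adjacency matrix on the point set (no self-loops, so a point is not collinear with itself), let $J$ be the all-ones matrix, and let $P$ be the permutation matrix of $\theta$. Then $f = \mathrm{tr}(P)$, and because $(AP)_{xx} = A_{x, x^\theta}$, also $g = \mathrm{tr}(AP)$. The task therefore becomes a congruence about $(1+t)\,\mathrm{tr}(P) + \mathrm{tr}(AP)$ modulo $s+t$.

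First I would compute the spectrum of $A$. A standard incidence count in a thick GQ shows that two distinct points have $s-1$ common collinear neighbours if they are collinear and $t+1$ otherwise, which gives
\[
A^2 = s(t+1)I + (s-1)A + (t+1)(J - I - A).
\]
This identifies the eigenvalues of $A$ as $s(t+1)$, $s-1$, and $-(t+1)$, on eigenspaces $V_0 = \langle \mathbf{1}\rangle$, $V_1$, and $V_2$. Since $\theta$ is an automorphism, $AP = PA$, so each $V_i$ is $P$-invariant. Setting $a_i := \mathrm{tr}(P|_{V_i})$, I record $a_0 = 1$ (as $P\mathbf{1} = \mathbf{1}$) together with
\[
f = 1 + a_1 + a_2, \qquad g = s(t+1) + (s-1)a_1 - (t+1)a_2.
\]

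The main obstacle will be verifying that $a_1$ (and hence $a_2$) is a rational integer. Here I would use that the eigenvalues of $A$ lie in $\mathbb{Z}$, so each $V_i$ is defined over $\mathbb{Q}$; then $P|_{V_i}$ is representable by a rational matrix of finite order, and its trace is a sum of roots of unity that is also rational, hence an algebraic integer in $\mathbb{Q}$ and therefore an ordinary integer. With integrality secured, eliminating $a_2$ from the two displayed equations yields
\[
(1+t)f + g = (1+t)(s+1) + (s+t)\,a_1,
\]
and rewriting $(1+t)(s+1) = (st+1) + (s+t)$ gives $(1+t)f + g = (st+1) + (s+t)(1+a_1)$, from which the stated congruence modulo $s+t$ follows at once.
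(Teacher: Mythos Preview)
Your argument is correct and is essentially the standard spectral proof of Benson's theorem: compute the eigenvalues of the collinearity graph, use that the permutation matrix of an automorphism commutes with the adjacency matrix and hence preserves eigenspaces, and reduce the congruence to the integrality of $\mathrm{tr}(P|_{V_1})$, which follows because the eigenspaces are rational and $P$ has finite order. The algebra in your last step checks out: $(1+t)f+g=(1+t)(s+1)+(s+t)a_1=(st+1)+(s+t)(1+a_1)$.

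Note, however, that the paper does not prove this theorem at all; it simply quotes it from Benson's original article \cite{Benson:1970ve} and uses it as a black box (in Lemma~\ref{line-fixed}). So there is no ``paper's proof'' to compare against. Your write-up is precisely the argument one finds in Benson's paper (and in standard references such as Payne--Thas), so in that sense it matches the cited source.
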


The following consequence of Benson's Theorem is an analogue of Corollaries 5.3 and 6.3 in the paper of 
Temmermans, Thas and Van Maldeghem \cite{Temmermans:2009fk}, for generalised quadrangles.

\begin{lemma}\label{line-fixed}
Let $\theta$ be a fixed-point-free automorphism of a thick generalised quadrangle $\mathcal{Q}$ of order
$(s,t)$. Suppose $\theta$ has order $2$ or $3$.  If $s$ and $t$ are not coprime, then $\theta$ fixes
a line of $\mathcal{Q}$.
\end{lemma}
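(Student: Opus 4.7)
The plan is to invoke Benson's Theorem (\ref{Benson}) twice, to argue that under the hypotheses both $f$ (the number of fixed points) and $g$ (the number of points moved to a collinear point) must vanish, and then to derive a numerical contradiction from $\gcd(s,t)>1$.

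First, assume for contradiction that $\theta$ is fixed-point-free and fixes no line of $\mathcal{Q}$. By hypothesis $f=0$. The key geometric step is to show that $g=0$ as well, using the order assumption together with the no-triangle axiom. Suppose $x$ is a point such that $x$ and $x^\theta$ are collinear; since $f=0$, $x\neq x^\theta$, so there is a unique line $L$ containing both. If $\theta$ has order $2$, then $L^\theta$ passes through $x^\theta$ and $x^{\theta^2}=x$, forcing $L^\theta=L$ and contradicting that $\theta$ fixes no line. If $\theta$ has order $3$, the three points $x,x^\theta,x^{\theta^2}$ are pairwise distinct (as $\theta$ is fixed-point-free), and the three lines $L,L^\theta,L^{\theta^2}$ pass through the pairs $\{x,x^\theta\}$, $\{x^\theta,x^{\theta^2}\}$, $\{x^{\theta^2},x\}$ respectively; if $L\ne L^\theta$ these lines form a triangle, which is forbidden in a generalised quadrangle, so again $L^\theta=L$, a contradiction.

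Having $f=g=0$, Benson's Theorem yields $st+1\equiv 0\pmod{s+t}$, that is, $s+t$ divides $st+1$. Now let $p$ be any prime dividing $\gcd(s,t)$, which exists by hypothesis. Then $p$ divides both $s+t$ and $st$, hence $p$ divides $st+1$ as well, forcing $p\mid 1$, a contradiction. Therefore our assumption was false and $\theta$ must fix some line.

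The arithmetic step is immediate once $g=0$ is established, so the only genuine obstacle is the geometric argument that collinearity between $x$ and $x^\theta$ forces a fixed line. This is handled separately for orders $2$ and $3$, with the order $3$ case being the more delicate one since it requires the no-triangle property rather than a direct calculation with $\theta^2 = 1$.
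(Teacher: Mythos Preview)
Your proof is correct and uses exactly the same ingredients as the paper's: Benson's Theorem, the arithmetic observation that $\gcd(s,t)>1$ prevents $s+t\mid st+1$, and the geometric argument that a point collinear with its image under an order $2$ or $3$ automorphism yields a fixed line via the no-triangle axiom. The only difference is organisational: the paper argues directly (first shows $g\neq 0$ by arithmetic, then extracts the fixed line), whereas you argue by contradiction (assume no fixed line, force $g=0$, then derive the arithmetic contradiction); these are contrapositives of one another.
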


\begin{proof}
Let $g$ denote the number of points collinear with their image under $\theta$. Then by Benson's Theorem \ref{Benson}, $g\equiv st+1 \pmod{s+t}$. As $s$, $t$ are not coprime there is an integer $m>1$ dividing both $s$ and $t$.  If $s+t$ divides $st+1$ then $m$ would divide both $st$ and $st+1$ which is not possible. Therefore $s+t$ does not divide $st+1$ and so $g$ is non-zero. Therefore, there is at least one point $x$ where $x^\theta$ is collinear with $x$. If $\theta$ has order $2$, then $\theta$ fixes the line joining $x$ and $x^\theta$. If $\theta$ has
order $3$, then $x, x^\theta, x^{\theta^2}$ are pairwise collinear, and as $\mathcal{Q}$ contains no triangles, the  points $x, x^\theta, x^{\theta^2}$ lie on a common line that is fixed by $\theta$.
\qed\end{proof}

\begin{lemma}\label{lemma:onlyASPAleft}
Let $\Q$ be a thick generalised quadrangle of order $(s,t)$, and suppose $G$ is a group of
automorphisms of $\Q$ acting primitively on the points of $\Q$ and primitively on the lines of
$\Q$. Then either:
\begin{enumerate}[(i)]
 \item one of these two primitive actions is of product action type; or
 \item $G$ is almost simple.
\end{enumerate}
\end{lemma}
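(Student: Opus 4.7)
The plan is to apply Lemma \ref{lem:2actions} and, using Table \ref{tab:2primactions}, eliminate every pair of O'Nan--Scott types except AS--AS (yielding conclusion (ii)) and those in which the PA type appears (yielding (i)). What then remains to be ruled out are HA--HA, HS--HS, HC--HC, TW--TW, SD--SD, CD--CD, together with the four mixed pairs TW--SD, TW--CD, SD--TW and CD--TW.

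For HA--HA, the elementary abelian socle $N=C_p^k$ is regular on both points and lines, so the two degrees coincide and $s=t$, forcing $(s+1)(s^2+1)$ to be a prime power. Since $s\ge 2$ this quantity is composite, so by Lemma \ref{s=t} we must have $s=7$ and $(s+1)(s^2+1)=400$; this is not a prime power, a contradiction.

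I would handle all of the remaining cases by a single strategy: produce an involution $\sigma\in\soc(G)$ that acts fixed-point-freely on both points and lines, and then contradict its existence via Lemma \ref{line-fixed}. For HS--HS, HC--HC and TW--TW the socle is regular on both sides (and every nontrivial finite simple group contains involutions), so any involution of $\soc(G)$ will serve. For the remaining cases $\soc(G)\cong T^k$ with $k\ge 2$, and the point stabiliser in any SD or CD action is a direct product of full strips of length at least two. I would choose $\sigma=(t,1,\dots,1)$, with $t$ an involution of $T$ placed in a single coordinate of $T^k$. The defining property of a full strip is that every nontrivial element has nontrivial projection on each coordinate of its support, and this property is preserved under conjugation in $N$; hence $\sigma$ cannot lie in any $N$-conjugate of such a stabiliser, so it is fixed-point-free in any SD or CD action, as well as in any TW action, where the socle is regular.

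To apply Lemma \ref{line-fixed} I would then show that $s$ and $t$ are not coprime. When the two actions are of matching type---HS--HS, HC--HC, TW--TW, SD--SD, or CD--CD with the same strip number---the two degrees are equal, so $s=t>1$ and non-coprimality is immediate. In the mixed pairs, and in CD--CD with differing strip numbers, both degrees are powers of $|T|$ and one divides the other, so one of $s+1$, $t+1$ divides the other and Lemma \ref{not-coprime} finishes the job. Lemma \ref{line-fixed} then forces $\sigma$ to fix a line, contradicting the construction. The main obstacle, as I see it, is the bookkeeping in this last stage---in particular, the careful verification that the single-coordinate involution really does avoid every $N$-conjugate of the SD/CD stabiliser, which I expect to follow cleanly from the strip characterisation recalled in Section \ref{section:background}.
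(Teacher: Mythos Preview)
Your approach is essentially the paper's: restrict the type pairs via Lemma~\ref{lem:2actions}, dispose of HA--HA with Lemma~\ref{s=t}, and for every remaining non-PA pair exhibit an involution in $\soc(G)$ that is fixed-point-free on both points and lines, then invoke Lemma~\ref{line-fixed} after checking non-coprimality via $s=t$ or Lemma~\ref{not-coprime}.

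There is, however, one slip. In types HS and HC the socle is \emph{not} regular: there are two minimal normal subgroups, each regular, and $\soc(G)$ is their product, so it has order the square of the degree and the point-stabiliser in $\soc(G)$ is a full diagonal copy of $T$ (respectively $T^k$). Hence ``any involution of $\soc(G)$ will serve'' is false --- an involution lying in that diagonal fixes a point. The fix is immediate and is exactly what the paper does: take the involution inside a single minimal normal subgroup, which is regular on both points and lines. With that correction your argument goes through and coincides with the paper's proof.
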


\begin{proof}
Suppose first that $G$ is a primitive group of type HA in its action on the points of $\Q$. Then by Lemma \ref{lem:2actions}, $G$ is
also primitive of type HA in its action on lines and so the unique minimal normal subgroup $N$ of $G$ acts
regularly on both points and lines. Thus the number of lines is equal to the number of points and so
$s=t$. The number of points is equal to $|N|=p^d$ for some prime $p$. Since the number of points is $(s+1)(st+1)$, 
it is not a prime and so $d\geq 2$.  Lemma \ref{s=t} implies
that this case cannot occur.

Next suppose that $G$ is primitive of type HS or HC on points. By Lemma \ref{lem:2actions}, $G$ is
primitive of the same type on lines. Moreover, a minimal normal subgroup $N$ of $G$ is regular on
both points and lines and so $s=t$. Since $N\cong T^k$, where $T$ is a nonabelian simple group, it follows from the
Feit-Thompson Theorem~\cite{FeitThompson} that $N$ contains an involution $g$. However, as $g$ fixes no points or lines,
this contradicts Lemma \ref{line-fixed} and so this case cannot occur.

Next suppose that $G$ is primitive of type TW on points and let $N= T^k$ be the unique minimal
normal subgroup of $G$, where $T$ is a nonabelian simple group. Then $N$ acts regularly on points. Since $G$ acts primitively on lines, $N$ must act transitively on lines and so the number of lines divides $|N|$, which is the number of points. Hence
$t+1$ divides $s+1$, and so by Lemma \ref{not-coprime}, $s$ and $t$ are not coprime.  By the
Feit-Thompson Theorem, $T$ contains an involution $g$. Then $g'=(g,1\ldots,1)$ is an element of $N$ and therefore fixed-point-free. Hence by Lemma \ref{line-fixed}, $g'$ fixes a line. By Table \ref{tab:primgroups}, in the TW case the stabiliser in $N$ of a line is trivial, in the SD case it is a full diagonal subgroup, and in the CD case, it is a product of $\ell$ pairwise disjoint strips of length $k/\ell\ge 2$. Since none of these can contain $g'$, it follows from Lemma \ref{lem:2actions} that $G$ must be of type PA on lines.

Next suppose that $G$ is primitive of type SD or CD on points and let $N\cong T^k$ be the unique
minimal normal subgroup of $G$, where $T$ is a nonabelian simple group. Then the number of points is equal to $|T|^{k-\ell}$ for some
$\ell\ge 1$.  By Lemma \ref{lem:2actions}, the action of $G$ on lines is of type TW, SD, CD or
PA. Suppose that the action on lines is not of type PA. Then the number of lines is $|T|^i$ for some
integer $i$. Hence either the number of points divides the number of lines or vice versa. Thus by
Lemma \ref{not-coprime}, $s$ and $t$ are not coprime. By the Feit-Thompson Theorem, $T$ contains an involution $g$ and so the element $g'=(g,1\ldots,1)$ lies in $N$.  By Table \ref{tab:primgroups}, the stabiliser in $N$ of a point is either a full diagonal subgroup (the SD case)  or a product of $\ell$ pairwise disjoint strips of length $k/\ell\ge 2$ (the CD case). Thus $g'$ fixes no points. Also the stabiliser in $N$ of a line is either trivial (the TW case) or a full diagonal subgroup (the SD case)  or a product of $\ell$ pairwise disjoint strips of length $k/\ell\ge 2$ (the CD case). Thus $g'$ also fixes no lines, contradicting Lemma \ref{line-fixed}. Hence
$G$ must be of type PA on lines. Thus by Lemma \ref{lem:2actions}, the possible combinations of the
action of $G$ on points and lines is given by the uncrossed entries of Table \ref{tab:poss} and so the
lemma holds.\qed\end{proof}

\begin{table}
\begin{center}
\begin{tabular}{c | l }
\hline
Primitive type on points & Primitive type on lines \\
\hline
\xout{HA} & \xout{HA}\\
\xout{HS} & \xout{HS} \\
\xout{HC} & \xout{HC} \\
AS & AS \\
TW & \xout{TW}, \xout{SD}, \xout{CD}, PA\\ 
SD & \xout{TW}, \xout{SD}, PA\\
CD & \xout{TW}, \xout{CD}, PA\\
PA & TW, SD, CD, PA\\
\hline
\end{tabular}
\caption{Possible actions on points and lines}\
\label{tab:poss}
\end{center}
\end{table}

\section{Product Action type} \label{section:productaction}

In this section we consider the case where
one of the primitive actions is of PA type; by duality we can assume that this is the action on points.
The argument proceeds by a series of lemmas that all use common hypotheses and notation, and so we first establish this common
setting.

\begin{hyp}\label{pahype}
Throughout this section, we assume that $\Q$ is a thick generalised quadrangle of order $(s,t)$ with a group $G$ of automorphisms
whose action on the points of $\Q$ is primitive of type PA. The group $G$ has a unique minimal normal subgroup $N=T^k$ where $k > 1$ and
$T$ is a nonabelian simple group. From the properties of the PA type, the set of points can be identified with 
$\Delta^k$ for some set $\Delta$ with $|\Delta|=\delta$. Furthermore, the group $G$ is a subgroup of $H\Wr S_k$, where $H$ is a primitive group of type AS on $\Delta$ with minimal normal subgroup isomorphic to $T$.
\end{hyp}

We first deal with the case $s=t$.

\begin{lemma}\label{lem:PAs=t} 
Under Hypothesis~\ref{pahype}, the generalised quadrangle $\Q$ does not have order $(s,s)$.
 \end{lemma}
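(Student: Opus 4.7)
My plan is to reduce the problem, via Lemma~\ref{s=t}, to a single very restricted configuration, identify the candidate socle $T$, and then derive a contradiction by counting orbits on the collinearity graph of $\Q$.

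Under Hypothesis~\ref{pahype} the number of points of $\Q$ equals $\delta^k$ with $k\ge 2$, so I would first invoke Lemma~\ref{s=t} to reduce the case $s=t$ to the unique possibility $s=t=7$, $\delta=20$, $k=2$. Thus $\Q$ would have $400=20\cdot 20$ points, and the socle of $G$ would be $N=T\times T$, where $T$ is the (nonabelian simple) socle of the primitive AS group $H$ on the $20$-element set $\Delta$. A direct inspection of the nonabelian simple groups of order divisible by $20$ that admit a maximal subgroup of index $20$ leaves only $T\cong\mathsf{PSL}(2,19)$ acting $2$-transitively on the projective line $\PG(1,19)$ as a viable option.

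I would then count orbits to obtain a contradiction. Fix a point $p=(a_0,b_0)\in\Delta\times\Delta$ and set $R:=T_{a_0}$; then the stabilizer of $p$ in $N$ equals $R\times R$, and by the $2$-transitivity of $T$ on $\Delta$ the $R$-orbits on $\Delta$ are $\{a_0\}$ and $\Delta\setminus\{a_0\}$. Hence $R\times R$ has exactly three orbits on $(\Delta\times\Delta)\setminus\{p\}$, of sizes $19$, $19$ and $361$. The set of points collinear with $p$ is preserved by $G_p\ge R\times R$ and has cardinality $s(t+1)=56$, so $56$ would have to equal the sum of some subset of $\{19,19,361\}$. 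The only such sums are $0$, $19$, $38$, $361$, $380$ and $399$, so no subset sums to $56$, and this contradicts the existence of $\Q$.

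The main obstacle in this approach is the classification step that pins $T$ down to $\mathsf{PSL}(2,19)$; once that is done, the orbit-count is mechanical. An alternative to the explicit classification is to argue directly from the subdegrees of $T$ on $\Delta$: since $|\Delta|=20$ and $T$ is almost simple on $\Delta$, the $T_{a_0}$-orbit lengths on $\Delta\setminus\{a_0\}$ all divide $|T|$ and sum to $19$, which already sharply restricts how $56$ can decompose as a subset-sum of the $(R\times R)$-orbit lengths on $(\Delta\times\Delta)\setminus\{p\}$.
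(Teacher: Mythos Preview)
Your overall strategy is sound and close to the paper's, but there is a factual slip in the classification step: it is not true that $\mathsf{PSL}(2,19)$ is the only nonabelian simple group with a maximal subgroup of index $20$. The alternating group $A_{20}$ has $A_{19}$ as a maximal subgroup of index $20$, and indeed the paper lists all four primitive groups of degree $20$, namely $\mathsf{PSL}(2,19)$, $\mathsf{PGL}(2,19)$, $A_{20}$ and $S_{20}$, so the socle $T$ is either $\mathsf{PSL}(2,19)$ or $A_{20}$. Fortunately your subdegree argument survives this omission unchanged: $A_{20}$ is also $2$-transitive on $\Delta$, so in that case too $R=T_{a_0}$ has orbits $\{a_0\}$ and $\Delta\setminus\{a_0\}$, the $(R\times R)$-orbits on $(\Delta\times\Delta)\setminus\{p\}$ still have sizes $19$, $19$, $361$, and the same subset-sum contradiction with $56$ applies. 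You should simply state both possibilities for $T$ and note that the $2$-transitivity holds in each.

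As for the comparison with the paper: the paper's argument also reduces via Lemma~\ref{s=t} to $\delta=20$, $k=2$ and identifies the two candidates for $T$, but then finishes differently. Rather than looking at $N_p$-orbits on the whole neighbourhood $p^\perp\setminus\{p\}$, the paper takes two collinear points $\alpha,\beta$ and observes that $N_{\alpha,\beta}=(T_1)_{x_1,x_1'}\times(T_2)_{x_2,x_2'}$ fixes the $8$-point line through them; since for both $T=A_{20}$ and $T=\mathsf{PSL}(2,19)$ every nontrivial orbit of a two-point stabiliser $T_{x,x'}$ on $\Delta$ has length at least $9$, the line cannot be a union of $N_{\alpha,\beta}$-orbits. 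Your approach and the paper's are equally short; yours has the mild advantage of only needing the one-point stabiliser (and $2$-transitivity), whereas the paper's needs the orbit structure of the two-point stabiliser, which is a slightly finer piece of information.
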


\begin{proof}
For a contradiction, suppose that $\Q$ does have order $(s,s)$.  It then follows from Lemma \ref{s=t} that $s=7$, $\delta=20$ and $k=2$. 
Moreover, $G$ is a subgroup of the wreath product $H\Wr S_2$ endowed with its natural product action on $\Delta^2$, and
$H$ is a primitive group on $\Delta$.  There are only four primitive groups of degree $20$:
$\mathsf{PSL}(2,19)$, $\mathsf{PGL}(2,19)$ (acting on the $20$ points of the projective line),
$A_{20}$ and $S_{20}$. Thus $H$ is one of these groups.

The socle of
$G$ is $N=T_1\times T_2$ with $T_1\cong T_2\cong T$. By direct inspection, we see that $T$ is either
$3$-transitive (if $T=A_{20}$) or the stabiliser $T_{x_1,x_2}$ of two distinct points
$x_1,x_2$ of $\Delta$ has two orbits of length $9$ on
$\Delta\setminus\{x_1,x_2\}$ (if $T=\mathsf{PSL}(2,19)$).

Let $\alpha=(x_1,x_2)$ and $\beta=(x_1',x_2')$ be two collinear points and let $\ell$ be
the line through $\alpha$ and $\beta$. Now $N_{\alpha,\beta}=(T_1)_{x_1,x_1'}\times
(T_2)_{x_2,x_2'}$ fixes the line $\ell$, which has $8$ points. In particular, $\ell$ is a
union of $N_{\alpha,\beta}$-orbits. Since each non-trivial orbit of $(T_1)_{x_1,x_1'}$ has length at
least $9$, we obtain that every $N_{\alpha,\beta}$-orbit has length at least $9$, a contradiction.
\qed\end{proof}

By Lemma \ref{lem:2actions}, if $G$ is primitive of type PA on points and primitive on lines
then the action on lines is of type TW, SD, CD or PA. Thus in each of these cases, we can write the line set as
$\Gamma^{k'}$ for some set $\Gamma$.  We let $\gamma=|\Gamma|$.

%

\begin{lemma}\label{support-k}
In addition to Hypothesis~\ref{pahype}, suppose that $G$ acts primitively of type TW, SD, CD,
or PA on lines and that if the action on lines is of type PA, then there are more lines than points. 
Then any two points in a line, viewed as elements of $\Delta^k$, are at Hamming distance $k$.
In addition, if the action on lines is of type TW, SD, or CD and $\alpha$ and $\beta$ are
collinear points, then the subgroup of $T$ that fixes both the $i^{\rm th}$ entry of $\alpha$ and
the $i^{\rm th}$ entry of $\beta$ is the identity.
\end{lemma}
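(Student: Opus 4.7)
The plan is to use the product-action structure of the socle $N=T^k$ on the point-set $\Delta^k$ to compute the pointwise stabiliser $N_{\alpha,\beta}$ of two collinear points $\alpha,\beta$, and then compare it against the known structure of $N_\ell$ for the line $\ell$ through them, splitting into cases according to the O'Nan-Scott type of $G$ on lines. Writing $\alpha=(x_1,\ldots,x_k)$ and $\beta=(y_1,\ldots,y_k)$, the fact that $\ell$ is the \emph{unique} line of $\Q$ through $\alpha$ and $\beta$ gives $G_{\alpha,\beta}\le G_\ell$, hence $N_{\alpha,\beta}\le N_\ell$, and the componentwise product action of $N=T^k$ on $\Delta^k$ yields
\[
N_{\alpha,\beta}=T_{x_1,y_1}\times T_{x_2,y_2}\times\cdots\times T_{x_k,y_k},
\]
where $T_{x_i,y_i}$ is exactly the subgroup appearing in the second claim of the lemma. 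Since the PA hypothesis forces $R=T_x$ (for $x\in\Delta$) to be a \emph{nontrivial} proper subgroup of $T$, the Hamming distance assertion ``$x_i\ne y_i$ for all $i$'' is equivalent to the assertion ``$T_{x_i,y_i}\ne T_{x_i}$ for all $i$''.

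I would then treat the TW, SD and CD cases in a single sweep by exploiting the group-theoretic fact that a full diagonal subgroup of $T^m$ with $m\ge 2$ contains no nonidentity element supported on a single coordinate. In TW, $N_\ell=1$ immediately forces every $T_{x_i,y_i}$ to be trivial. In SD, $N_\ell$ is a full diagonal subgroup of $T^k$; if some $T_{x_i,y_i}$ were nontrivial, then embedding a nonidentity element into coordinate $i$ (trivially elsewhere) produces an element of $N_{\alpha,\beta}\le N_\ell$ supported on a single coordinate, a contradiction. The CD argument is identical, applied inside each full strip, whose length is at least $2$ by definition. In all three cases $T_{x_i,y_i}=1$, which, combined with $T_{x_i}\ne 1$, proves both the Hamming distance claim and the stronger stabiliser statement simultaneously.

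For the remaining PA case on lines, since $N=T^k$ is the unique minimal normal subgroup under both actions, the same integer $k$ and simple group $T$ appear, so the line-set is identified with $\Gamma^k$ and $N_\ell=R''_1\times\cdots\times R''_k$ where each $R''_i$ has the common order $|R''|$ of a proper nontrivial subgroup $R''\le T$ dictated by the line action. The hypothesis ``more lines than points'' translates to $|N_\alpha|>|N_\ell|$, i.e., $|R|^k>|R''|^k$ and hence $|R|>|R''|$. The componentwise containment $N_{\alpha,\beta}\le N_\ell$ then gives $T_{x_i,y_i}\le R''_i$ for each $i$; if $x_i=y_i$, then $T_{x_i,y_i}=T_{x_i}$ has order $|R|$, contradicting its containment in $R''_i$ of strictly smaller order $|R''|$. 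Hence $x_i\ne y_i$ for all $i$, which is the Hamming distance claim (no stronger stabiliser statement is asserted in this case). The main obstacle I foresee is making the ``direct product inside a diagonal'' step in SD and CD fully rigorous, and, in the PA case, justifying that the two product structures genuinely share the same exponent $k$---both ultimately resting on the fact that the socle, as the unique minimal normal subgroup in each of the four types considered, is an invariant of $G$ rather than of any particular action.
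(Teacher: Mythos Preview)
Your argument is correct and ultimately rests on the same structural fact the paper exploits---that in the TW, SD and CD cases $N_\ell$ contains no nonidentity element supported on a single coordinate, while in the PA case the coordinate stabilisers on lines are strictly smaller than those on points---but you organise the deduction differently. The paper normalises $\alpha=(x,\ldots,x)$, takes the single-coordinate subgroup $R_i\cong T_x$ of $N_\alpha$, and argues that $R_i$ is semiregular on lines (TW/SD/CD) or too large to fix $\ell$ (PA); from this it first reads off that the $i$th entry of $\beta$ differs from $x$, and then, via an orbit-length count $|y^{R}|=|R|$, obtains $T_{x,y}=1$. You instead pass directly through the containment $N_{\alpha,\beta}=\prod_i T_{x_i,y_i}\le N_\ell$, deduce $T_{x_i,y_i}=1$ (respectively $T_{x_i,y_i}\le R''_i$ with $|R''_i|<|T_{x_i}|$) componentwise, and recover the Hamming-distance statement as a corollary of the stabiliser statement rather than the other way round. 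Your packaging is a little more direct and avoids the orbit argument; the two obstacles you flag (single-coordinate elements in diagonal/strip subgroups, and $k'=k$ in the PA case) are genuine but routine, and the paper itself relies on the latter identification in Lemma~\ref{line-stabiliser}.
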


\begin{proof}
Fix a point $\alpha$ which, by point-transitivity, we can assume is  given by $\alpha=(x,\ldots, x)$ for some $x \in
\Delta$.  Let $R = T_x$ be the stabiliser of $x$ in the action of $T$ on the points of $\Delta$
(which is nontrivial under PA action).  Let $\beta$ be any point that is collinear with $\alpha$,
and let $\ell$ be the line through $\alpha$ and $\beta$.  Let $R_i$ be the subgroup of $N$ with $\pi_j(R_i)=\{1\}$ for all $j\neq i$ and $\pi_i(R_i)=R$.

If the action on the lines is of TW, SD, or CD type, then each $R_i$ is semiregular in its action on
lines, since the stabiliser of a line in $N=T^k$ is either trivial (the TW case) or the
product of full strips all of length at least 2.  In particular, $R_i$ does not fix $\ell$,
so cannot fix $\beta$.  Since the action of $R_i$ affects only the $i^{\rm th}$ coordinate of
$\beta$, and $R$ fixes $x$, the $i^{\rm th}$ entry of $\beta$ cannot be $x$.  So
the Hamming distance between $\beta$ and $\alpha$ is $k$, as claimed.

Furthermore, since $R_i$ is semiregular in its action on lines, no two elements of $R_i$ can take
$\beta$ to the same point, so if $y$ is the $i^{\rm th}$ entry of $\beta$, we have $|y^R|=|R|$.
Hence $T_{x,y}=R_y=\{1\}$, as claimed.

Suppose now that the action on the lines is of PA type.  Then the line $\ell$ is in
$\Gamma^{k'}$. Let $S_i$ ($1 \le i \le k'$) be the stabiliser of the $i^{\rm th}$ entry of $\ell$ in
the action of $T$ on the elements of $\Gamma$.  Then all of the $S_i$ have the same order.  As
both of the actions are of PA type there are more lines than points, so $\gamma>\delta$, giving $|T|/\delta >|T|/\gamma$; thus $|R|>|S_i|$.  Hence for any $i$, there is some $r_i \in R_i$ such that $\ell^{r_i} \neq \ell$.  Again,
this means that $r_i$ cannot fix $\beta$; the action of $r_i$ affects only the $i^{\rm th}$ entry of
$\beta$ and fixes $x$, so the $i^{\rm th}$ entry of $\beta$ cannot be $x$.  So again the Hamming
distance between $\beta$ and $\alpha$ is $k$.  \qed\end{proof}

\begin{cor}\label{s+1-le-delta}
In addition to Hypothesis~\ref{pahype}, suppose that $G$ acts primitively of type TW, SD, CD,
or PA on the lines of $\Q$ and that if the action on lines is of type PA,  then $\Q$ has more lines than points.
Then $s+1 \le \delta$.
\end{cor}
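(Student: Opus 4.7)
The plan is to derive the corollary as an almost immediate consequence of Lemma~\ref{support-k}. Fix a line $\ell$ of $\Q$; since $\Q$ has order $(s,t)$, the line $\ell$ carries exactly $s+1$ points. I will show that the projections of these points onto any single coordinate of $\Delta^k$ are pairwise distinct, which forces $s+1 \le \delta$.

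To do this, first observe that any two points $\alpha,\beta$ on $\ell$ are collinear in $\Q$. Under Hypothesis~\ref{pahype} together with the hypothesis on the action on lines, Lemma~\ref{support-k} applies, and therefore $\alpha$ and $\beta$ are at Hamming distance exactly $k$ as elements of $\Delta^k$. In particular, they disagree in every coordinate.

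Now fix the projection $\pi_1\colon \Delta^k \to \Delta$ onto the first coordinate. Applied to the $s+1$ points of $\ell$, the previous paragraph shows that the images under $\pi_1$ are pairwise distinct elements of $\Delta$. Hence the set of first coordinates of the points of $\ell$ is a subset of $\Delta$ of cardinality $s+1$, giving $s+1 \le |\Delta| = \delta$.

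There is no real obstacle here: once Lemma~\ref{support-k} has been established, the corollary is just the pigeonhole observation that $s+1$ pairwise distinct first coordinates must fit inside $\Delta$. The only thing to check is that the hypotheses of Lemma~\ref{support-k} are indeed in force, namely that the action on lines is of type TW, SD, CD, or PA, and that in the PA case $\Q$ has more lines than points; but these are exactly the standing assumptions of the corollary.
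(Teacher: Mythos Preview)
Your proof is correct and follows essentially the same approach as the paper: both argue that Lemma~\ref{support-k} forces distinct points of a line to differ in every coordinate, so projecting the $s+1$ points of a line onto a fixed coordinate yields $s+1$ distinct elements of $\Delta$, whence $s+1\le\delta$.
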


\begin{proof}
By Lemma~\ref{support-k}, two points on a line $\ell$ have different elements of $\Delta$ in any fixed coordinate position. 
Therefore there are at most $\delta$ values available for any fixed coordinate position, and so at most $\delta$ points on $\ell$.
\qed\end{proof}

\begin{cor}\label{k-le-3}
In addition to Hypothesis~\ref{pahype}, suppose that $G$ acts primitively of type TW, SD, CD,
or PA on the lines of $\Q$ and that if the action on lines is of type PA, then $\Q$ has more lines than points. Then
$k \le 3$.
\end{cor}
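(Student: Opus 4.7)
The plan is to combine the lower bound on $s$ given by Corollary~\ref{s+1-le-delta} with the general upper bound on the number of points of a generalised quadrangle in terms of $s$ from Lemma~\ref{useful}(ii). Since the point set is identified with $\Delta^k$, the number of points is exactly $v = \delta^k$.

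First, I would recall from Lemma~\ref{useful}(ii) that $v < (s+1)^4$. Next, by Corollary~\ref{s+1-le-delta} (which applies under precisely the hypotheses we have assumed here), $s+1 \le \delta$, and hence $(s+1)^4 \le \delta^4$. Substituting the expression $v = \delta^k$ then yields
\[
\delta^k \;=\; v \;<\; (s+1)^4 \;\le\; \delta^4.
\]
Since $\delta \ge 2$ (in fact $\delta \ge 5$, as $H$ is a primitive group of type AS acting on $\Delta$, and any nonabelian simple group has no faithful primitive action of degree less than $5$), this strict inequality forces $k < 4$, i.e.\ $k \le 3$.

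There is no real obstacle here: the work has already been done in establishing $s+1 \le \delta$ (Corollary~\ref{s+1-le-delta}), which is itself a consequence of the Hamming-distance lemma (Lemma~\ref{support-k}); once that bound is in hand, the Higman-type inequality $v < (s+1)^4$ immediately caps the number of product factors. The only thing worth being slightly careful about is that the hypothesis ``if the action on lines is of type PA, then $\Q$ has more lines than points'' is exactly what is needed to invoke Corollary~\ref{s+1-le-delta}, so the hypotheses of the present corollary are tailored to make this two-line comparison go through.
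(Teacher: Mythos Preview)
Your proof is correct and follows essentially the same approach as the paper: both combine $s+1\le\delta$ from Corollary~\ref{s+1-le-delta} with Higman's inequality $t\le s^2$ and the identification $v=\delta^k$. Your version is in fact a slight streamlining, since you invoke the pre-packaged bound $v<(s+1)^4$ of Lemma~\ref{useful}(ii) directly, whereas the paper redoes that computation inline (deriving $t\ge\delta^{k-2}$ and then $s\ge\delta^{(k-2)/2}$ before comparing with $s<\delta$).
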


\begin{proof}
By Corollary~\ref{s+1-le-delta}, we have $s+1 \le \delta$.  We also have the number of points is
$\delta^k=(s+1)(st+1)$. So $st+1 \ge \delta^{k-1}$.  Hence $t \ge (\delta^{k-1}-1)/s \ge
(\delta^{k-1}-1)/(\delta-1)$.  From Higman's inequality (\ref{lem:s+tdiv}(i)), we also have $s^2 \ge t$, so $s^2 \ge \delta^{k-2}$, so $s \ge
\delta^{(k-2)/2}$.  Comparing this with $s <\delta$ forces $(k-2)/2<1$ so $k<4$, as claimed.
\qed\end{proof}

\begin{cor}\label{TW-and-CD}
Under Hypothesis~\ref{pahype}, the action of $G$ on the lines of $\Q$ is not primitive of type TW or CD.
\end{cor}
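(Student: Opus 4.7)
The plan is short: both the TW and CD O'Nan--Scott types impose a lower bound on $k$ (the number of simple factors of the socle $N = T^k$ under Hypothesis \ref{pahype}) which clashes with the upper bound $k \le 3$ already supplied by Corollary \ref{k-le-3}.

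First I would confirm that Corollary \ref{k-le-3} applies in both subcases. The statement of that corollary assumes Hypothesis \ref{pahype} together with the hypothesis that $G$ acts on lines primitively of type TW, SD, CD, or PA, with the additional ``more lines than points'' assumption required \emph{only} in the PA case. Hence in both the TW and the CD subcases of Corollary \ref{TW-and-CD} we obtain $k \le 3$ without any further hypothesis on the number of lines.

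In the CD subcase, the definition in Table \ref{tab:primgroups} tells us that the line stabiliser in $N$ is a direct product of $\ell \ge 2$ pairwise disjoint full strips each of length $k/\ell \ge 2$, which forces $k \ge 4$ and immediately contradicts $k \le 3$. In the TW subcase, the remark preceding Table \ref{tab:primgroups} records (via \cite[Theorem 4.7B(iv)]{DM} and the Classification of Finite Simple Groups) that any primitive action of TW type requires $k \ge 6$, again contradicting $k \le 3$.

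Both subcases therefore reduce to an immediate arithmetic clash, so I expect no genuine obstacle. The substantive work is upstream, in Corollary \ref{k-le-3} itself, which combines Higman's inequality $t \le s^2$ (Lemma \ref{lem:s+tdiv}(i)) with the bound $s+1 \le \delta$ from Corollary \ref{s+1-le-delta} to force $s \ge \delta^{(k-2)/2}$ and hence $k < 4$. Viewed in this light, Corollary \ref{TW-and-CD} is simply a dictionary check: the TW and CD types are too ``coordinate-rich'' in $k$ to coexist with a PA action on the points of a thick generalised quadrangle.
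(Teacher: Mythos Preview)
Your proposal is correct and follows essentially the same route as the paper: invoke Corollary~\ref{k-le-3} to get $k\le 3$, then observe that TW forces $k\ge 6$ (via the Classification of Finite Simple Groups) and CD forces $k=\ell\cdot(k/\ell)\ge 4$, yielding an immediate contradiction in each case. Your explicit check that the ``more lines than points'' hypothesis in Corollary~\ref{k-le-3} is only needed in the PA subcase is a welcome clarification.
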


\begin{proof}
In a group of TW type, we have $k \ge 6$ (note that this requires the Classification of Finite Simple
Groups).  In a group of CD type, $k$ is the product of $\ell$, $k/\ell \ge 2$ and so $k \ge 4$.  These contradict Corollary~\ref{k-le-3}.
\qed\end{proof}

\begin{lemma}\label{SD}
Under Hypothesis~\ref{pahype}, the action of $G$ on the lines of $\Q$ is not primitive of type SD.
\end{lemma}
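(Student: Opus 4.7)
The plan is to assume for contradiction that $G$ acts primitively of type SD on the lines of $\Q$. Since SD requires $k\ge 2$ while Corollary~\ref{k-le-3} gives $k\le 3$, I handle the cases $k=2$ and $k=3$ separately. In the SD case the $N$-stabiliser of a line is a full diagonal subgroup of $N$ isomorphic to $T$, so $\Q$ has $|T|^{k-1}$ lines, and combined with $(s+1)(st+1)=\delta^k$ and $|T|=\delta\cdot|R|$ (writing $R=T_x$) this pins down the relations among $(s,t,\delta,|T|,|R|)$ up to one free parameter in each case.

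The main structural input is Lemma~\ref{support-k}: for any two collinear points $\alpha=(x_1,\ldots,x_k)$ and $\beta=(y_1,\ldots,y_k)$ the subgroups $T_{x_i,y_i}$ are trivial, so $N_{\alpha,\beta}=1$. I extract two consequences. First, $N_\alpha=R^k$ acts semiregularly on the $s(t+1)$ points collinear with $\alpha$, giving the divisibility $|R|^k\mid s(t+1)$. Second, writing a line stabiliser as $N_\ell=\{(g^{\phi_1},\ldots,g^{\phi_k}):g\in T\}$ and intersecting with $N_\alpha$ shows that $N_{\alpha,\ell}$ corresponds under $N_\ell\cong T$ to $\bigcap_{i}(T_{x_i})^{\phi_i^{-1}}$, an intersection of $k$ conjugate copies of $R$ in $T$. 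Hence $|N_{\alpha,\ell}|\le|R|$, so every $N_\alpha$-orbit on the $t+1$ lines through $\alpha$ has size at least $|R|^{k-1}$, yielding $t+1\ge|R|^{k-1}$.

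For $k=2$ the lines-through-$\alpha$ bound reads $t+1\ge|R|$. Since $|T|=(st+1)(t+1)$ and $\delta=\sqrt{(s+1)(st+1)}$, one has $|R|=(t+1)\sqrt{(st+1)/(s+1)}$, so the bound simplifies to $s+1\ge st+1$, forcing $t\le 1$, contradicting thickness.

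For $k=3$ the bound $t+1\ge|R|^2$ gives $\delta^2\ge st+1$, which together with $\delta^3=(s+1)(st+1)$ and Corollary~\ref{s+1-le-delta} forces $\delta=s+1$ and $t=s+2$. Consequently $|T|^2=(s+1)^2(s+3)$, so $s+3=m^2$ for some integer $m\ge 3$ (the lower bound coming from thickness $s\ge 2$), yielding $|T|=m(m^2-2)$ and $|R|=m$. The main obstacle is eliminating this one-parameter family, which I address by revisiting the semiregular divisibility $|R|^3\mid s(t+1)$: this becomes $m^3\mid m^2(m^2-3)$, so $m\mid 3$ and hence $m=3$. But then $|T|=21$, and since every group of order $pq$ for distinct primes $p<q$ is solvable, no nonabelian simple group has order $21$, contradicting the simplicity of $T$.
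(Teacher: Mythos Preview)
Your proof is correct, but it takes a different route from the paper's.

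Both arguments begin by establishing $t+1\ge|R|^{k-1}$. The paper obtains this more directly: in an SD action a subgroup $T^{k-1}$ of $N$ acts regularly on lines, so its intersection $R^{k-1}$ with $N_\alpha$ is semiregular on the $t+1$ lines through $\alpha$. You instead bound $|N_{\alpha,\ell}|\le|R|$ by identifying it with an intersection of $k$ automorphic images of $R$ inside $T$; this is fine but slightly less direct. From $t+1\ge|R|^{k-1}$ both arguments deduce $st+1\le\delta^{k-1}$ and hence, via $\delta^k=(s+1)(st+1)$ and Corollary~\ref{s+1-le-delta}, that $s+1=\delta$. The paper does this uniformly in $k$, without a case split.

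The real divergence is in the endgame. Once $s+1=\delta$, the paper observes that the $\delta$ points on a line, being pairwise at Hamming distance $k$, exhaust $\Delta$ in each coordinate; the second clause of Lemma~\ref{support-k} then gives $T_{x,x'}=1$ for \emph{every} pair $x\ne x'$ in $\Delta$, so $T$ acts on $\Delta$ as a Frobenius group, which is impossible for a nonabelian simple $T$. This is a clean, uniform contradiction. Your argument instead splits on $k$. For $k=2$ the inequality already forces $t\le 1$. For $k=3$ you reach the one-parameter family $s=m^2-3$, $|T|=m(m^2-2)$, $|R|=m$, and then invoke a second divisibility relation $|R|^k\mid s(t+1)$, obtained from the semiregular action of $N_\alpha$ on the $s(t+1)$ points collinear with $\alpha$ (this uses $N_{\alpha,\beta}=1$, which does follow from the second clause of Lemma~\ref{support-k}). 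This forces $m\mid 3$, hence $|T|=21$, which is not the order of a simple group. Your approach trades the Frobenius structure theorem for an extra divisibility constraint and some elementary arithmetic; it is more computational but arguably more self-contained.
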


\begin{proof}
By the properties of primitive groups of type SD, the line set can be written as $\Gamma^{k-1}$ where $\Gamma$ can be identified with $T$. Moreover, a subgroup $T^{k-1}$ of  the socle $N=T^k$ of $G$ acts regularly on the set of lines.  Let $\alpha$ be a point incident with a line $\ell$.   Then the intersection $R^{k-1}$ of  $T^{k-1}$ with $N_{\alpha}$ acts semiregularly on lines through $\alpha$. Hence no two elements of $R^{k-1}$
can take $\ell$ to the same line, so $|\ell^{R^{k-1}}|=|R^{k-1}|$.  Hence $|R^{k-1}|$ divides the
number of lines through $\alpha$, but the number of lines through $\alpha$ is $t+1$.  So we have
$t+1 \ge |R^{k-1}|$.  Now $|R|=|T|/\delta$, and so $t+1 \ge (|T|/\delta)^{k-1}$.

Now, $(t+1)(st+1)=|T|^{k-1}$.  Combining this with $t+1 \ge (|T|/\delta)^{k-1}$ gives $st+1 \le
\delta^{k-1}$.  Recall that $(s+1)(st+1)=\delta^k$, and so $s+1 \ge \delta$.  Thus by
Corollary~\ref{s+1-le-delta}, we have $s+1=\delta$.

Now since the line $\ell$ contains $\delta$ points, pairwise at Hamming distance $k$ from one
another (by Lemma~\ref{support-k}), every element of $\Delta$ must appear as the first entry of some
point on $\ell$.  Now the second part of the statement of Lemma~\ref{support-k} tells us that
$T_{x,x'}=1$ for every pair $x\neq x' \in \Delta$.  Furthermore $T$ cannot be regular in its action (a
property of PA groups).  So by definition, $T$ is a Frobenius group.  However, the Structure Theorem of
Frobenius groups (see \cite[p 86]{DM}), implies that no finite simple group is a Frobenius group.
\qed\end{proof}

We can therefore assume that the action on both points and lines is of PA type, and that $k \le 3$.

\begin{lemma}\label{line-stabiliser}
In addition to Hypothesis~\ref{pahype}, assume that $G$ acts primitively of type PA on the lines of $\Q$ and that $\Q$ has
more lines than points. Then the stabiliser $N_{\ell}$ of any line $\ell$ fixes every point in that line. Furthermore $s+1$ divides $t+1$.
\end{lemma}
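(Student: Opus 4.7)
The plan is in two steps. First I will show that $N_\ell$ fixes every point of $\ell$ by using the product decomposition of $N_\ell$ together with Lemma \ref{support-k}. Then I will deduce the divisibility from the resulting equality $N_{\alpha,\ell} = N_\ell$ by computing a single orbit size.

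For the first step, fix $\alpha = (\alpha_1, \ldots, \alpha_k) \in \ell$ and consider the $N_\ell$-orbit $\mathcal{O}$ of $\alpha$. Since the simple factors of the socle $N = T^k$ are canonically determined, and both PA actions must have their Cartesian decompositions indexed by these $k$ factors, one can align the labelling so that the $i$-th simple factor $T_i$ acts on the $i$-th coordinate of both $\Delta^k$ and $\Gamma^k$. Then $N_\ell = (T_1)_{\ell_1} \times \cdots \times (T_k)_{\ell_k}$ acts coordinate-wise on $\Delta^k$, so $\mathcal{O}$ is the Cartesian product $O_1 \times \cdots \times O_k$ of the individual orbits $O_i = (T_i)_{\ell_i} \cdot \alpha_i$. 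Because $N_\ell$ stabilises $\ell$, the whole of $\mathcal{O}$ lies on $\ell$. The key observation is that if some $O_i$ were larger than $\{\alpha_i\}$, one could change only the $i$-th coordinate of $\alpha$ to obtain another point of $\ell$ at Hamming distance $1$ from $\alpha$, contradicting Lemma \ref{support-k}. Hence every $O_i$ is trivial, $\mathcal{O} = \{\alpha\}$, and since $\alpha$ was arbitrary, $N_\ell$ fixes $\ell$ pointwise.

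For the second step, it now follows that $N_{\alpha,\ell} = N_\ell$ for every $\alpha \in \ell$, so the $N_\alpha$-orbit of $\ell$ among the lines through $\alpha$ has size $|N_\alpha|/|N_\ell|$, which is necessarily a positive integer. Writing $|N_\alpha| = (|T|/\delta)^k$ and $|N_\ell| = (|T|/\gamma)^k$ and using the counts $\delta^k = (s+1)(st+1)$ and $\gamma^k = (t+1)(st+1)$, this orbit size simplifies to $(\gamma/\delta)^k = (t+1)/(s+1)$. Hence $(t+1)/(s+1)$ is a positive integer, i.e., $s+1$ divides $t+1$.

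The main delicate point is the first step: one has to pin down how the simple factors of $N$ correspond to coordinates in each of the two product decompositions, since a priori the two PA structures could permute the factors differently. Once this labelling is aligned, the Hamming-distance lemma finishes the geometric part of the argument very quickly, and the divisibility drops out as an immediate numerical consequence of an orbit-size computation.
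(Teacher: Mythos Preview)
Your argument is correct and follows essentially the same route as the paper: both use the coordinate-wise decomposition of $N_\ell$ together with Lemma~\ref{support-k} to force $N_\ell$ to fix each point of $\ell$, and then extract the divisibility from the containment $N_\ell\le N_\alpha$. The only cosmetic difference is in the final step: the paper observes that the coordinate containment $T_{y_i}\le T_{\alpha_i}$ gives $\delta\mid\gamma$ directly, whereas you package the same information as the integrality of the orbit length $|N_\alpha:N_\ell|=(\gamma/\delta)^k=(t+1)/(s+1)$; and you flag explicitly the alignment of the two Cartesian decompositions via the simple direct factors of $N$, a point the paper leaves implicit.
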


\begin{proof}
Since $\ell \in \Gamma^{k'}$ and $k'=k$ when we have two actions of PA type of the same group, we have $\ell=(y_1,\ldots, y_k)$ for some $y_1, \ldots, y_k \in \Gamma$.  Let $\overline{S_i}=T_{y_i}$ for $1 \le i \le k$, and let $S_i$ be the subgroup of $N$ with $\pi_j(S_i)=\{1\}$ for all $j\neq i$ and $\pi_i(S_i)=\overline{S_i}$.  Let $\alpha$ be any point of $\ell$.  Then since $S_i$ fixes $\ell$, we must
have $\alpha^{s_i} \in \ell$ for any $s_i \in S_i$, and $s_i$ changes only the $i^{\rm th}$ entry of $\alpha$.  By Lemma~\ref{support-k}, no other point in $\ell$ has any entries in common with $\alpha$, certainly not $k-1$ entries in common, so it must be the
case that for every $s_i \in S_i$, $\alpha^{s_i}=\alpha$.  Thus, for every $i$, we have shown that $S_i$ fixes
$\alpha$.  This means that $N_{\ell}=\overline{S_1} \times \ldots \times \overline{S_k}$ also fixes $\alpha$.  Since
$\alpha$ is an arbitrary point of $\ell$, this completes the proof of the first statement.

In fact, this shows that $\overline{S_1} < R_1$ where $R_1$ is the subgroup of $T$ that fixes the first
coordinate of $\alpha$.  So $|\overline{S_1}|$ divides $|R_1|$, meaning $\delta \mid \gamma$ (since $\delta =
|T|/|R_1|$ and $\gamma=|T|/|\overline{S_1}|$).  Then we have $(t+1)/(s+1)=(\gamma/\delta)^k$, and so $s+1$ divides $t+1$.  \qed\end{proof}

The following result yields a generalised subquadrangle of the generalised quadrangle $\mathcal{Q}$.

\begin{lemma}\label{GQ-book-1}
In addition to Hypothesis~\ref{pahype}, assume that $G$ acts primitively of type PA on the lines of $\Q$ and that $\Q$ has
more lines than points. Let $\ell$ be an arbitrary line of $\Q$, and let $S$ be any subgroup
of $N_{\ell}$ that contains $T_y\times 1^{k-1}$, where $y\in \Gamma$ is the first entry of $\ell$.  Then the
incidence substructure $\Q_{S}$ consisting of the points and lines that are fixed by
every element of $S$ is described by one of the following:
\begin{enumerate}[(i)]
\item there are exactly two lines of $\Q_S$ through every point of $\Q_S$, and $\Q_S$ is an $s+1$ by
  $s+1$ grid; or
\item $\Q_S$ is a proper thick subquadrangle of order $(s, t')$ with $2 \le t'<t$.
\end{enumerate}
\end{lemma}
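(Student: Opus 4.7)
The plan is to apply Lemma~\ref{sub-GQ-theta} to the fixed substructure $\Q_S$ and eliminate every possibility it lists except the $(s+1)\times(s+1)$ grid (conclusion (i)) and a proper thick subquadrangle of order $(s,t')$ (conclusion (ii)). Although Lemma~\ref{sub-GQ-theta} is stated for a single automorphism, its proof only uses that the fixed substructure is closed under the unique join of two collinear fixed points and the unique intersection of two concurrent fixed lines, and therefore applies verbatim to the substructure fixed by any group of automorphisms.

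First I would observe that by Lemma~\ref{line-stabiliser} the line stabiliser $N_\ell$ fixes every point of $\ell$ pointwise, and since $S\le N_\ell$ this means that $\ell$ together with its $s+1$ points belongs to $\Q_S$. In particular $\Q_S$ contains a line of length $s+1\ge 3$, which immediately excludes cases (i), (ii), and (vi) of Lemma~\ref{sub-GQ-theta}. Next I would compute the fixed points of $S$: since $N=T^k$ acts componentwise on $\Delta^k$, the fixed point set is a product $F_1\times\cdots\times F_k$ with $F_i:=\mathrm{Fix}_\Delta(\pi_i(S))$. The inclusion $S\le N_\ell=T_{y_1}\times\cdots\times T_{y_k}$ gives $\pi_i(S)\le T_{y_i}$, and by Lemma~\ref{support-k} the $s+1$ points of $\ell$ have pairwise distinct $i$-th coordinates, each a fixed point of $T_{y_i}$ on $\Delta$; so $|F_i|\ge s+1$ and $\Q_S$ has at least $(s+1)^k\ge(s+1)^2$ points, far more than the $s+1$ that one line can carry. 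This rules out case (iv).

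The main obstacle is to rule out case (iii), in which every fixed line passes through a single distinguished point $p$. My approach is as follows: since $\ell$ is fixed we have $p\in\ell$, and writing $p=(p_1,\ldots,p_k)$ the estimate $\prod_{i\ge 2}|F_i|\ge(s+1)^{k-1}\ge 3$ produces a fixed point $\beta=(p_1,z_2,\ldots,z_k)$ with $(z_2,\ldots,z_k)\ne(p_2,\ldots,p_k)$. By Lemma~\ref{support-k}, $\beta$ shares its first coordinate with $p$ and therefore cannot be collinear with $p$ in $\Q$, and in particular $\beta\notin\ell$. The generalised quadrangle axiom then gives a unique point $\alpha\in\ell$ collinear with $\beta$, necessarily $\alpha\ne p$. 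The join $\beta\alpha$ is fixed (being the unique, hence $S$-invariant, line through two fixed collinear points) and by hypothesis it contains $p$; but any line through the two distinct points $p$ and $\alpha$ of $\ell$ must be $\ell$ itself, forcing $\beta\in\ell$, a contradiction.

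With only cases (v) and (vii) remaining, I would identify them with the two conclusions of the lemma. In the grid case every point of $\Q_S$ lies on exactly two fixed lines, so each of the $s+1$ points of $\ell$ lies on a second fixed line of $\Q$ of length $s+1$; the transverse family therefore has $s+1$ lines and the grid is $(s+1)\times(s+1)$, giving conclusion (i). In the thick subquadrangle case of order $(s',t')$, the presence of $\ell$ forces $s'=s$; properness together with $s'=s$ forces $t'<t$ (lines of $\Q_S$ through a point are lines of $\Q$, so $t'\le t$, and $(s',t')\ne(s,t)$ by properness); and thickness provides $t'\ge 2$, yielding conclusion (ii).
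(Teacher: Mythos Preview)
Your proof is correct and follows the same strategy as the paper: apply Lemma~\ref{sub-GQ-theta} and use the product structure $F_1\times\cdots\times F_k$ of the fixed-point set together with Lemma~\ref{line-stabiliser} to eliminate the degenerate cases. Your contradiction argument for case~(iii)---producing a fixed $\beta$ that shares a coordinate with the distinguished point $p$ and is therefore non-collinear with it by Lemma~\ref{support-k}---is a mild variant of the paper's approach, which instead directly exhibits three fixed lines $\ell,\ell',\ell''$ meeting $\ell$ in distinct points.

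One small gap: in conclusion~(ii) you invoke properness of $\Q_S$ to obtain $t'<t$, but you never verify $\Q_S\ne\Q$. The paper handles this explicitly by noting that $T_y\ne 1$ in PA type, so $T_y\times 1^{k-1}\le S$ acts nontrivially on $\Delta^k$ and hence $\Q_S\subsetneq\Q$. You might also make explicit that Lemma~\ref{line-stabiliser} applies to \emph{every} line $\ell'$ fixed by $S$ (since then $S\le N\cap G_{\ell'}=N_{\ell'}$), so that all lines of $\Q_S$ carry their full $s+1$ points; you rely on this when concluding the grid is $(s+1)\times(s+1)$ and when asserting $s'=s$ in the subquadrangle case.
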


\begin{proof}
Since $S$ is a subgroup of $N_\ell$, we have at least one line, $\ell$, in $\Q_S$.  By
Lemma~\ref{line-stabiliser}, every point of $\ell$ is fixed by $S$, so $\Q_S$ has at least $s+1$
points.  In fact, this lemma tells us that whenever $\ell'$ is fixed by $S$, so is every point of
$\ell'$, so every line in $\Q_S$ has $s+1$ points.  Since $s\ge 2$, there are more than $2$ points
in every line of $\Q_S$. Hence $\Q_S$ is not a dual grid. Moreover, since $T_y\neq 1$ (a property of actions of type PA given in Table \ref{tab:primgroups}), $S$ acts nontrivially on the points of $\Q$ and so $\Q\neq \Q_S$. Thus by Lemma~\ref{sub-GQ-theta}, it remains to show that all points do not lie on a distinguished line and all lines do not pass through a distinguished point.

Suppose that $\alpha_1=(x_{1,1}, \ldots, x_{1,k})$, \ldots\ , $\alpha_{s+1}=(x_{s+1,1}, \ldots, x_{s+1,k})$
are the points of $\ell$.  Then by Lemma~\ref{support-k} $x_{i,j} \neq x_{i',j}$ for any $j$ and any
$i \neq i'$, and every point of the form $(x_{i_1,1},\ldots, x_{i_{k},k})$ where $i_1, \ldots, i_{k}
\in \{1, \ldots, s+1\}$ is fixed by $N_{\ell}$ and hence by $S$, so there are at least $(s+1)^{k}$
points in $\Q_S$.  In particular, the points of $\Q_S$ do not lie on a distinguished line. Thus let $\alpha$ be a point of $\Q_S$ not on $\ell$ and let $\ell'$ be the unique line of $\Q$ passing through $\alpha$ and meeting $\ell$. Let $\beta$ be the point of intersection of $\ell$ and $\ell'$. Since $S$ fixes all points on $\ell$, it fixes $\beta$ and hence $S$ also fixes $\ell'$. Thus $\ell'\in\Q_S$. Since $(s+1)^k>s+1+s$, we can find a third point $\alpha'$ fixed by $S$ and on neither $\ell$ nor $\ell'$. Since the points of $\ell$ and $\ell'$ differ from $\beta$ in all coordinates (Lemma \ref{support-k}), $\alpha'$ can be chosen to have the first entry in common with $\beta$. Let $\ell''$ be the unique line of $\Q$ through $\alpha'$ and meeting $\ell$, and let $\beta'$ be the point of intersection. By Lemma \ref{support-k}, $\beta\neq \beta'$. Since $S$ fixes $\beta'$ and $\alpha''$, it also fixes $\ell''$ and so $\ell''\in\Q_S$. Thus $\ell,\ell',\ell''$ are three lines of $\Q_S$ that do not pass through a common point. Thus not all lines of $\Q_S$ pass through a distinguished point and the result follows.
 \qed\end{proof}

\begin{lemma}\label{no-PA}
Under Hypothesis~\ref{pahype}, the group $G$ does not act primitively of type PA on the lines of $\Q$.
\end{lemma}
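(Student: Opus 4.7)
The plan is to argue by contradiction, assuming that $G$ acts primitively of type PA on the lines of $\Q$ as well as on points. Since the PA hypothesis is symmetric between points and lines (passing to the dual quadrangle preserves both the PA-on-points and PA-on-lines assumptions), and since Lemma~\ref{lem:PAs=t} rules out $s=t$, I may assume without loss of generality that $t>s$, so $\Q$ has more lines than points. This activates both Lemma~\ref{line-stabiliser} (yielding in particular $s+1\mid t+1$ and $\delta\mid\gamma$) and Lemma~\ref{GQ-book-1}, and by Corollary~\ref{k-le-3} we have $k\in\{2,3\}$.

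The core step is to apply Lemma~\ref{GQ-book-1} with $S=N_\ell=T_{y_1}\times\cdots\times T_{y_k}$ for a line $\ell$; this $S$ trivially contains $T_{y_1}\times 1^{k-1}$. The conclusion identifies $\Q_{N_\ell}$ as either an $(s+1)\times(s+1)$ grid or a proper thick subquadrangle of order $(s,t')$ with $2\le t'<t$. The proof of Lemma~\ref{GQ-book-1} already exhibits at least $(s+1)^k$ points fixed by $N_\ell$ (the ``mix-and-match'' combinations of coordinates of the $s+1$ points of $\ell$), so for $k=3$ the count $(s+1)^3>(s+1)^2$ immediately excludes the grid case.

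To eliminate the subquadrangle case (for either value of $k$), I would iterate Lemma~\ref{GQ-book-1} along the chain of subgroups $T_{y_1}\times 1^{k-1}=S_1\lneq S_2\lneq\cdots\lneq S_k=N_\ell$, each still containing $T_{y_1}\times 1^{k-1}$. This produces a descending chain $\Q_{S_1}\supseteq\Q_{S_2}\supseteq\cdots\supseteq\Q_{S_k}$ of proper substructures. If any two consecutive terms are both proper thick subquadrangles with strict inclusion, then Lemma~\ref{GQ-book-2} forces the inner one to satisfy $t''=1$, contradicting thickness; so the chain must collapse and $\Q_{N_\ell}=\Q_{T_{y_1}\times 1^{k-1}}$. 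Pushing this collapse through Lemma~\ref{GQ-book-2} applied to any genuine chain $\Q\supsetneq\Q_{S_1}\supsetneq\Q_{S_k}$ that might arise, together with the divisibility $s+1\mid t+1$ from Lemma~\ref{line-stabiliser}, I expect to reduce to the parameter regime $t=s^2$, $t'=s$; this yields $s+1\mid s^2+1=(s+1)(s-1)+2$, hence $s+1\mid 2$, contradicting $s\ge 2$.

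The remaining case, and the main technical obstacle, is the grid outcome when $k=2$: here the fixed-point count $(s+1)^2$ matches the grid exactly and no immediate contradiction arises. The tools are the parametric identities $(s+1)(st+1)=\delta^2$ and $(t+1)(st+1)=\gamma^2$, which together with $\delta\mid\gamma$ yield $t+1=m^2(s+1)$ with $m=\gamma/\delta\ge 2$. Higman's inequality then forces $m^2<s$, so the possibilities are severely constrained; combining the requirement that $\delta^2=(s+1)(st+1)$ be a perfect square with the divisibility condition $s+t\mid st(s+1)(t+1)$ from Lemma~\ref{lem:s+tdiv}(ii), a careful arithmetic case analysis (using the small prime factors of $s+t=m^2(s+1)+s-1$ against those of $st(s+1)(t+1)$) should eliminate all remaining parametric candidates. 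I expect this $k=2$ grid analysis to contain the bulk of the calculation, while the $k=3$ case and the subquadrangle branch fall out cleanly from the fixed-point count and the GQ-book-2 iteration.
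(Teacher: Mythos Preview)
Your overall architecture matches the paper's: reduce by duality and Lemma~\ref{lem:PAs=t} to $t>s$, build a chain of subgroups between $T_y\times 1^{k-1}$ and $N_\ell$, apply Lemma~\ref{GQ-book-1} to each, feed the resulting nested substructures into Lemma~\ref{GQ-book-2} to force $t=s^2$, and finish with $s+1\mid t+1$. That is exactly what the paper does.

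However, your ``remaining case'' --- the $k=2$ grid outcome for $\Q_{N_\ell}$ --- is a phantom, and the arithmetic programme you sketch for it is unnecessary. The point you are missing is that an $(s+1)\times(s+1)$ grid \emph{is} a generalised quadrangle of order $(s,1)$, so Lemma~\ref{GQ-book-2} applies to the chain $\Q\supsetneq\Q_{S_1}\supsetneq\Q_{N_\ell}$ even when the inner term is a grid; indeed its conclusion $t''=1$ is precisely the grid case. What you need, and what the paper supplies, is the observation that the inclusions are \emph{strict} on points (because $T_y\neq 1$, the second coordinate in $\Q_{S_1}$ is unconstrained while in $\Q_{N_\ell}$ it is not), so $t>t'>t''\ge 1$ and hence $t'\ge 2$. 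This alone forces $\Q_{S_1}$ into case~(ii) of Lemma~\ref{GQ-book-1}, and Lemma~\ref{GQ-book-2} then yields $t=s^2$ directly, with no case split on $k$ and no arithmetic beyond $s+1\mid s^2+1$.

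Your ``collapse'' reasoning is also slightly garbled: Lemma~\ref{GQ-book-2} does not give a contradiction when applied to two nested proper subquadrangles of order $(s,\cdot)$ --- it gives the exact values $t''=1$, $t'=s$, $t=s^2$, which is the desired output, not an obstruction. Once you see this, the whole proof is three lines and uniform in $k$.
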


\begin{proof}
By Lemma~\ref{lem:PAs=t} and taking duals if necessary, we may suppose that $\Q$ has more lines than points.
Let $y\in\Gamma$, and let $\ell$ be the line $(y,\ldots, y)$.  Consider the substructure $\Q'$ of fixed points and lines of $T_{y}^{k-1}
\times 1$  and the substructure $\Q''$ of fixed
points and lines of $T_y^k$.  Both groups are subgroups of $N_\ell$ that
contain $T_y \times 1^{k-1}$, so by Lemma~\ref{GQ-book-1}, the structures $\Q'$ and $\Q''$ are each
either an $s+1$ by $s+1$ grid, or a proper thick subquadrangle.
Now, $T_y \neq 1$ because of the properties of primitive groups of type PA,  so the set of points of $\Q''$ is a proper subset of
the set of points of $\Q'$, which is a proper subset of the set of points of $\Q$.  So $t>t'>t''\ge
1$, and hence $t' \ge 2$.  Therefore $\Q'$ is a proper subquadrangle of order $(s,t')$ where $t' \ge 2$, 
and $\Q''$ is a proper subquadrangle of order $(s,t'')$ where $t'' \ge 1$ (where if $t''=1$, $\Q''$ is a grid).  Now we can apply Lemma~\ref{GQ-book-2} to obtain $t=s^2$.  By
Lemma~\ref{line-stabiliser}, we have $s+1$ divides $t+1 = s^2+1=s(s+1)+1-s$, so $s+1$ divides $s-1$, a
contradiction since $s>1$.  \qed\end{proof}

We can now prove Theorem~\ref{main1}.

\begin{proof}
By Lemma~\ref{lemma:onlyASPAleft}, if $G$ is not almost simple, then the action of $G$ is of PA type on either the points or the lines of the generalised quadrangle.  By duality, we can assume that the action of $G$ is of PA type on the points.  
Lemma~\ref{lem:2actions} shows that the action of $G$ on the lines must be of TW, CD, SD, or PA type on the lines. The first three of these possibilities are ruled out by Corollary~\ref{TW-and-CD} and Lemma~\ref{SD} and Lemma~\ref{no-PA} rules out the PA case, thereby completing the proof.
\qed\end{proof}

\section{The Alternating Groups}\label{section:alternating}

Our goal in this section is to prove Theorem \ref{theorem:alternatinggroups} when the socle of $G$ is an alternating group.
Note that the automorphism group of the unique
generalised quadrangle of order $(2,2)$ is isomorphic to $S_6$.  An almost simple group with socle
$A_n$ must be $A_n$ or $S_n$, or $n=6$ and it is one of $A_6$, $S_6$, $M_{10}$, $\mathsf{PGL}(2,9)$ or
$\mathsf{P\Gamma L}(2,9)$.  First we deal with these exceptional cases.

\begin{lemma}\label{lemma:strange6}
The groups $M_{10}$, $\mathsf{PGL}(2,9)$ and $\mathsf{P\Gamma L}(2,9)$ do not act primitively on the
points of a thick generalised quadrangle.
\end{lemma}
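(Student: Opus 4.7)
My plan is to enumerate the primitive actions of each of the three groups and check each possible degree against the requirements for the point-count of a thick generalised quadrangle. Since each group has order at most $1440$ and a thick generalised quadrangle has at least $15$ points, only finitely many degrees need be considered.

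First, I would list the maximal subgroups of $M_{10}$, $\mathsf{PGL}(2,9)$, and $\mathsf{P\Gamma L}(2,9)$ (for instance via the Atlas of Finite Groups) to obtain the possible primitive degrees. For each degree $v \ge 15$, I would apply the arithmetic filter: is there a factorisation $v = (s+1)(st+1)$ with integers $s, t \ge 2$ satisfying Higman's inequality $s \le t^2$ and $t \le s^2$ from Lemma~\ref{lem:s+tdiv}(i)? A short case check shows that among the plausible candidate degrees (principally $30, 36, 45, 60, 72, 90, 120$), only $v = 45$ survives, forcing $(s, t) = (4, 2)$.

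For the surviving case $v = 45$, the unique thick generalised quadrangle of order $(4, 2)$ is the dual of the symplectic quadrangle $W(2)$, with full automorphism group $X = \mathsf{PSU}(4,2) \rtimes 2$. A point-primitive action of $G$ on this quadrangle would embed $G$ into $X$ in such a way that the $45$-point action coincides with that of $X$ on the dual $W(2)^{D}$. I would then invoke the structure of $\mathrm{Out}(A_6) \cong C_2 \times C_2$: the three $C_2$-extensions $S_6$, $\mathsf{PGL}(2,9)$, $M_{10}$ of $A_6$ correspond to the three non-trivial cosets of $\mathrm{Out}(A_6)$, and the Atlas identifies the normaliser $N_X(A_6)$, from which one reads off that only the $S_6$-class of outer automorphism is realised inside $X$. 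Consequently neither $\mathsf{PGL}(2,9)$ nor $M_{10}$ embeds in $X$, and $\mathsf{P\Gamma L}(2,9) = A_6 \cdot 2^2$ does not either, since it would require both non-$S_6$ outer classes of $A_6$ to be realised in $X$ simultaneously. If the outer-class bookkeeping proves uncongenial, one can instead compute directly the subdegrees of the natural degree-$45$ action of each group on unordered pairs of $\mathrm{PG}(1,9)$ and derive a contradiction by comparing them against the subdegrees $\{1, 12, 32\}$ of the rank-$3$ collinearity graph $\mathrm{SRG}(45, 12, 3, 3)$ of $W(2)^{D}$.

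The main obstacle is the $v = 45$ case, because the generalised quadrangle of order $(4, 2)$ exists and is point-primitive under its full automorphism group, so the case cannot be ruled out on arithmetic grounds alone. Its resolution rests on a direct determination of which $C_2$-extensions of $A_6$ live inside $\mathsf{PSU}(4,2) \rtimes 2$; this is a routine Atlas computation, but is the technical heart of the argument.
\qed
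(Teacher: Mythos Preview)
Your approach is essentially the same as the paper's: enumerate the primitive degrees, filter arithmetically to isolate $v=45$ with $(s,t)=(4,2)$, invoke uniqueness of the generalised quadrangle of that order, and then check that none of the three groups embeds in its full automorphism group $\mathsf{P\Gamma U}(4,2)$. Your list of candidate degrees is a bit loose (the paper notes that for each of the three groups the indices of maximal subgroups with index greater than $2$ are exactly $10$, $36$, $45$), but since $36$ and $45$ are on your list the filtering step is sound; and you supply more detail than the paper does on the final embedding check, offering both the $\mathrm{Out}(A_6)$ argument and the subdegree comparison (the latter works cleanly: all three groups act $3$-transitively on $\mathrm{PG}(1,9)$, so the pair action has subdegrees $1,16,28$, not the required $1,12,32$).
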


\begin{proof}
Each group has exactly three maximal subgroups with index greater than $2$, and their indices are precisely
$45$, $36$ and $10$.  The only one of these three of the form $(s+1)(st+1)$ is $45$, whereby
$s=4$ and $t=2$. By \cite[5.3.2(ii)]{Payne:2009ly}, there is a unique generalised quadrangle of
order $(4,2)$ and its full automorphism group is $\mathsf{P\Gamma U}(4,2)$.  It is not difficult to
check that $M_{10}$, $\mathsf{PGL}(2,9)$ and $\mathsf{P\Gamma L}(2,9)$ are not subgroups of
$\mathsf{P\Gamma U}(4,2)$.  \qed\end{proof}

We can adapt the following two results from similar statements proven in~\cite{Schneider:2008zr}:

\begin{lemma}\label{lemma:power6}
Let $\Q$ be a thick generalised quadrangle of order $(s,t)$ and let $G$ be a point-transitive group of automorphisms of $\Q$. If $\alpha$ is a point of $\Q$, then $|G|/|G_\alpha| <
(1+t)^5$. If $G$ acts transitively on flags, then $|G| < |G_\alpha|^6$.
\end{lemma}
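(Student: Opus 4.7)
The plan is to obtain both inequalities as direct consequences of the orbit-stabiliser theorem, the count of points in a generalised quadrangle of order $(s,t)$, and the bound already established in Lemma~\ref{useful}(i).

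For the first inequality, I would observe that by the orbit-stabiliser theorem the index $|G|/|G_\alpha|$ equals the size of the $G$-orbit of $\alpha$, which by point-transitivity is the full point set of $\Q$. Thus $|G|/|G_\alpha| = v = (s+1)(st+1)$. Lemma~\ref{useful}(i) then yields immediately $v < (t+1)^5$, giving the required bound.

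For the second inequality, the additional hypothesis of flag-transitivity lets me bound $t+1$ in terms of $|G_\alpha|$. Specifically, since $G$ is transitive on the set of incident point-line pairs and $\alpha$ is fixed by $G_\alpha$, the stabiliser $G_\alpha$ must act transitively on the $t+1$ lines through $\alpha$. By orbit-stabiliser applied inside $G_\alpha$, this forces $t+1$ to divide $|G_\alpha|$ and in particular $t+1 \le |G_\alpha|$. Combining this with the first part, I obtain
\[
|G| = |G_\alpha|\cdot \frac{|G|}{|G_\alpha|} < |G_\alpha|\cdot (t+1)^5 \le |G_\alpha|\cdot |G_\alpha|^5 = |G_\alpha|^6,
\]
as required.

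Both steps are essentially bookkeeping once the ingredients from Lemma~\ref{useful} and the orbit-stabiliser theorem are in hand, so I do not anticipate any real obstacle; the only subtle point is noticing that flag-transitivity forces $G_\alpha$ to be transitive on lines through $\alpha$, which is the standard consequence used to bring $(t+1)$ under the control of $|G_\alpha|$.
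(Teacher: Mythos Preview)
Your proposal is correct and follows essentially the same approach as the paper: both proofs use the orbit-stabiliser theorem together with Lemma~\ref{useful}(i) for the first inequality, and then observe that flag-transitivity forces $G_\alpha$ to act transitively on the $t+1$ lines through $\alpha$, giving $t+1\le|G_\alpha|$ and hence the second inequality.
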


\begin{proof}
Since $G$ acts transitively, the Orbit-Stabiliser Theorem tells us that the number of points of $\Q$ equals $|G|/|G_{\alpha}|$. Lemma~\ref{useful}(i) gives the first inequality.  If $G$ acts flag-transitively, then $G_\alpha$ acts transitively on the $t+1$ lines through $\alpha$, so the Orbit-Stabiliser Theorem implies $|G_\alpha| \ge 1+t$ and the result follows.
\qed\end{proof}

\begin{lemma}\label{lemma:alternatinggroups}
Let $G=A_{n}$ or $S_n$ act flag-transitively on a thick generalised quadrangle $\Q$ with point stabiliser
$H$. Then one of the following holds:
\begin{enumerate}[(i)]
\item $H$ is intransitive in its action on $\{1, \ldots, n\}$;
\item $H$ is imprimitive in its action on $\{1, \ldots, n\}$; or
\item $n \le 47$.
\end{enumerate}
\end{lemma}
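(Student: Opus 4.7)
The plan is to argue by contradiction: assume that $H$ acts primitively on $\{1,\ldots,n\}$ and that $n \ge 48$, and derive a numerical contradiction. Since $G$ acts primitively on the points of $\Q$, the stabiliser $H$ is a maximal and proper subgroup of $G \in \{A_n, S_n\}$, and in particular $H$ does not contain $A_n$. The flag-transitivity hypothesis lets us invoke Lemma~\ref{lemma:power6}, giving $|G| < |H|^6$; combining this with $|G| \ge n!/2$ yields the key lower bound
\[
|H| > \bigl(\tfrac{1}{2}\,n!\bigr)^{1/6}.
\]

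The main tool is Mar\'oti's theorem on the orders of primitive permutation groups, which asserts that any primitive subgroup $H$ of $S_n$ not containing $A_n$ satisfies one of: (a) there exist integers $m \ge 5$, $k \ge 1$, $l \ge 1$ with $n = \binom{m}{k}^l$ such that $H \le S_m \Wr S_l$ in its product action on $l$-tuples of $k$-subsets of $\{1,\ldots,m\}$, giving $|H| \le (m!)^l\, l!$; (b) $H$ is one of the Mathieu groups $M_{11}, M_{12}, M_{23}, M_{24}$ in natural action, so $n \in \{11,12,23,24\}$; or (c) the uniform bound
\[
|H| \le n \cdot \prod_{i=0}^{\lfloor \log_2 n \rfloor - 1}\bigl(n - 2^i\bigr)
\]
holds. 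Case (b) is immediately vacuous since $n \ge 48$. In case (c), substituting the stated upper bound into the lower bound on $|H|$ derived above and applying Stirling's approximation yields a contradiction for all $n \ge 48$, so case (c) is also dispatched.

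It remains to handle case (a). We may assume $(k, l) \ne (1, 1)$, as otherwise $H \supseteq A_n$. For every remaining triple $(m, k, l)$ with $n = \binom{m}{k}^l \ge 48$, a direct numerical comparison establishes $(m!)^l\, l! < (n!/2)^{1/6}$. For example, the smallest relevant triple is $(m,k,l) = (7,1,2)$, giving $n = 49$ and $|H| \le 2\cdot (7!)^2 \approx 5\times 10^7$, which is dwarfed by $(49!/2)^{1/6} \approx 10^{10}$. The asymptotic gap between $(m!)^l\, l!$, growing roughly like $(m^m)^l$, and $(n!)^{1/6}$, growing like $n^{n/6}$ with $n \ge m^l$, guarantees that only finitely many triples actually need to be examined. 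The principal obstacle is therefore organising this enumeration cleanly so that no exceptional $(m, k, l)$ is overlooked; however, in every individual instance the factorial growth of $n!$ outstrips the wreath-product bound, producing the desired contradiction and establishing $n \le 47$.
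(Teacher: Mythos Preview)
Your overall architecture matches the paper's: invoke Lemma~\ref{lemma:power6} to get $|G| < |H|^6$, then apply Mar\'oti's trichotomy and eliminate each branch. The Mathieu case and the generic bound in case~(c) are handled identically in both arguments (the paper uses the cruder form $|H|\le n^{1+\lfloor\log_2 n\rfloor}$ and states that direct calculation gives $|H|^6<|G|$ for $n\ge 48$; this is tight at $n=48$, so your appeal to Stirling does need to be backed by an actual computation).

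The genuine difference is in the wreath-product case~(a). The paper restricts attention to the product-action subcase $n=m^k$, $k\ge 2$, and disposes of it by a $p$-adic comparison: choosing a prime $p\le\max\{m,k\}$ and comparing $\bigl((m!)^{6k}(k!)^6\bigr)_p$ with $(n!/2)_p$ via the estimates in \cite[Lemma~5.3]{Schneider:2008zr}, which forces $(m,k)=(5,2)$, $n=25$. You instead propose a direct size comparison $(m!)^l\,l! < (n!/2)^{1/6}$ over all admissible triples $(m,k,l)$. This is more elementary --- no borrowed Sylow lemma --- and in fact covers the $l=1$, $k\ge 2$ subcase (action on $k$-subsets) that the paper silently absorbs into case~(c). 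The cost is that your finiteness-and-verification step is only sketched: you give one worked example and an asymptotic heuristic, whereas a complete proof would need either an explicit monotonicity argument or a short tabulation of the boundary triples (e.g.\ $(m,k,l)\in\{(7,1,2),(8,1,2),(11,2,1),(8,3,1),(5,1,3),\ldots\}$). None of these fail the inequality, so the approach is sound, but as written the reduction to ``finitely many triples'' is asserted rather than proved.
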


\begin{proof}
The group $H$ is intransitive, imprimitive or primitive on $\{1,\ldots,n\}$. If $H$ is primitive,
then a result of Mar\'oti \cite{Maroti:2002fk} implies that one of the following holds:
\begin{itemize}
\item $H$ is  a Mathieu group $M_n$ with $n=11,12,23,24$;
\item $H\le S_m\Wr S_k$ with $n=m^k$, $m\ge 5$ and $k\ge 2$; or 
\item $|H|\le n^{1+\lfloor \log_2(n)\rfloor}$.
\end{itemize}

Suppose $H\le S_m\Wr S_k$, where $n=m^k$, $m\ge 5$ and $k\ge 2$.  Then $|H|^6$ divides 
$(m!)^{6k}(k!)^6$. By Lemma \ref{lemma:power6}, we have $|H|^6 > |G|$ and so there exists a prime
$p$ dividing $|H|$ with $p\le \max\{m,k\}$ such that $|H|^6_p \ge |G|_p$, where for an integer $r$,
$r_p$ denotes the largest power of $p$ that divides $r$. Therefore,
$$\left((m!)^{6k}(k!)^6\right)_p\ge \left(\tfrac{1}{2} (m^k)!\right)_p.$$ By the calculations in the
proof of \cite[Lemma 5.3]{Schneider:2008zr}, $\left((m!)^{6k}(k!)^6\right)_p\le \frac{12mk+12k}{p}$,
and since $p^2\le m^k$, we have $\left(\tfrac{1}{2}(m^k)!\right)_p\ge m^k/p$.  So $12mk+12k\ge m^k$
and hence $5\le m\le 25$ and $k=2$, or $5\le m\le 6$ and $k=3$.  However, the only value for which
$(m!)^{6k}(k!)^6\ge \tfrac{1}{2} (m^k)!$ is $(m,k)=(5,2)$, that is, $n=25$.

If $|H|\le n^{1+\lfloor \log_2(n)\rfloor}$ then by \cite[Lemma 5.2]{Schneider:2008zr}, we have
$|H|^6<|H|^{12}<|G|$ when $n\ge 107.$ Calculations show that we actually have $|H|^6<|G|$ for all $n\ge
48$. Hence by Lemma \ref{lemma:power6} it follows that $n\le 47$.  \qed\end{proof}

The remaining three subsections in this section consider the cases where the action of $H$ on $\{1, 2, \ldots, n\}$ is intransitive, imprimitive or primitive with $n \le 47$. Together, they complete the proof of Theorem \ref{theorem:alternatinggroups} in the case where $\soc(G)$ is an alternating group.

%
%

\subsection{Intransitive stabiliser}

In this subsection we deal with the situation summarised in the following hypothesis, which will be used in the statements of most of our results.

\begin{hyp}\label{hyp-intrans}
Suppose that $G$ is $A_n$ or $S_n$ with $n\geq 5$, acting primitively on the
set of points of a thick generalised quadrangle $\Q$ of order $(s,t)$.  Further suppose that the stabiliser of a point is intransitive in its action on
$\{1,\ldots,n\}$, so that we can identify the points of $\Q$ with the subsets of cardinality
$k$ from a set of cardinality $n$. 
\end{hyp}

The goal of this subsection is to show that this situation cannot occur unless $n=6$.

\begin{lemma}\label{share-i-collinear}
Under Hypothesis~\ref{hyp-intrans}, let $\alpha$ and $\beta$ be collinear points of $\Q$ such that $|\alpha \cap
\beta|=i$. Then whenever $|\alpha' \cap \beta'|=i$, we have that $\alpha'$ and $\beta'$ are
collinear in $\Q$.
\end{lemma}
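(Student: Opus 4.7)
The plan is to exploit the strong transitivity properties of $A_n$ and $S_n$ acting on unordered $k$-subsets. Specifically, I would show that $G$ acts transitively on the set of ordered pairs $(X,Y)$ of $k$-subsets of $\{1,\dots,n\}$ satisfying $|X\cap Y|=i$. Once this is done the lemma is immediate: by transitivity there exists $g\in G$ with $(\alpha,\beta)^g=(\alpha',\beta')$, and since $g$ is an automorphism of $\Q$ it preserves collinearity, so the collinearity of $\alpha,\beta$ transfers to $\alpha',\beta'$.

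To establish the required transitivity I would consider, for each pair $(X,Y)$, the ordered $4$-part partition of $\{1,\dots,n\}$
\[
\bigl(X\cap Y,\; X\setminus Y,\; Y\setminus X,\; \{1,\dots,n\}\setminus(X\cup Y)\bigr),
\]
whose block sizes are $(i,\,k-i,\,k-i,\,n-2k+i)$ and depend only on $n$, $k$, $i$. Any bijection of $\{1,\dots,n\}$ that carries the four blocks of the partition for $(\alpha,\beta)$ onto the corresponding four blocks for $(\alpha',\beta')$ is an element of $S_n$ mapping $\alpha\mapsto\alpha'$ and $\beta\mapsto\beta'$. This gives the required transitivity when $G=S_n$.

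When $G=A_n$, I must adjust parity. If the element $g\in S_n$ produced above is odd, I would multiply on the right by a transposition supported inside a single block of the partition for $(\alpha',\beta')$; such a transposition stabilises both $\alpha'$ and $\beta'$, so the modified element still sends $(\alpha,\beta)$ to $(\alpha',\beta')$ and is now even. The only thing to check is that at least one of the four blocks has size $\ge 2$; but if every block had size at most $1$, then $n=i+(k-i)+(k-i)+(n-2k+i)\le 4$, contrary to the hypothesis $n\ge 5$. Hence $A_n$ is also transitive on such pairs.

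Strictly speaking there is no substantial obstacle here; the only subtlety is the parity correction for $A_n$, and the bound $n\ge 5$ from Hypothesis~\ref{hyp-intrans} is exactly what makes the correction possible. The lemma then follows by applying an appropriate $g\in G$ carrying $(\alpha,\beta)$ to $(\alpha',\beta')$ and using that $G$ acts on $\Q$ by automorphisms.
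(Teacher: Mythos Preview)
Your proof is correct and follows essentially the same strategy as the paper---namely, showing that $G$ acts transitively on ordered pairs of $k$-subsets with intersection of a fixed size $i$---but your execution is cleaner. The paper first reduces to the case $\alpha'=\alpha$ via point-transitivity and then handles the parity correction for $A_n$ with a somewhat ad hoc condition (involving $|\beta\cap\beta'|$ and $|\alpha\cap\beta\cap\beta'|$), followed by an explicit case check when $n=5$. Your argument, by contrast, treats both points at once and disposes of the parity issue uniformly: the four blocks of the partition for $(\alpha',\beta')$ have sizes $i,\,k-i,\,k-i,\,n-2k+i$ summing to $n$, so $n\ge 5$ forces one of them to have size at least~$2$, giving a transposition in the stabiliser of both $\alpha'$ and $\beta'$. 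This avoids the case analysis entirely.
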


\begin{proof}
We show this when $\alpha'=\alpha$; since $G$ acts transitively on the points, the result will follow.  
Clearly, there is some $g \in S_n$ such that $(\alpha\cap \beta)^g=\alpha \cap \beta'$, $(\beta\setminus\alpha)^g=\beta' \setminus \alpha$, and 
$(\alpha\setminus \beta)^g=\alpha \setminus \beta'$.  Thus $g$ fixes $\alpha$ and takes
$\beta$ to $\beta'$, so $\alpha$ and $\beta'$ must be collinear.  If $g \in A_n$, then we are done.

If $i \ge 2$ or $k-i \ge 2$ or $n-3k+2i+i'-j \ge 2$ where $i'=|\beta \cap \beta'|$ and $j=|\alpha \cap \beta \cap
\beta'|$, then we can add another 2-cycle to $g$ if necessary, to get an element of $A_n$.  
Since $A_n$ is nonabelian simple we have $n \ge 5$, so these conditions are not satisfied only when $n=5$, $k=2$,
$i=1$, and $i'=j \in \{0,1\}$.  Then without loss of generality $\alpha=\{1,2\}$, $\beta=\{2,3\}$, and
$\beta'$ is either $\{1,4\}$ or $\{2,4\}$.  In the first case, $g=(1 \ 2)(3\ 4) \in A_5$ and $g$ fixes $\alpha$
and takes $\beta$ to $\beta'$; in the second case, $g=(3\ 4\ 5)$ plays this role.  \qed\end{proof}

\begin{lemma}\label{k-1-non-collinear-pt}
Under Hypothesis~\ref{hyp-intrans}, if $\alpha$ and $\beta$ are collinear points of $\Q$ with
$|\alpha \cap \beta|=i$, then there is some point $\beta'$ such that $|\alpha \cap
\beta'|=i$ but $\beta'$ is not collinear with $\beta$.
\end{lemma}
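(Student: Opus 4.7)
The plan is to argue by contradiction: I would assume no such $\beta'$ exists and force the entire orbit $S_i(\alpha):=\{\gamma : |\alpha\cap\gamma|=i\}$ onto the unique line $\ell$ through $\alpha$ and $\beta$, then obtain a contradiction from a cardinality count. First I would invoke Lemma~\ref{share-i-collinear} to note that every $\beta'\in S_i(\alpha)$ is collinear with $\alpha$. Under the contradiction hypothesis each $\beta'\in S_i(\alpha)\setminus\{\beta\}$ is also collinear with $\beta$, so $\beta'$ is a common neighbour of the distinct collinear points $\alpha$ and $\beta$. In any generalised quadrangle, such a common neighbour must lie on the unique line $\ell$ through $\alpha$ and $\beta$, since otherwise we would produce a triangle in the collinearity graph, contradicting the girth-$8$ condition. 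Hence $S_i(\alpha)\subseteq\ell$, and since $\alpha\in\ell$ but $\alpha\notin S_i(\alpha)$ (because $i<k$), this forces
\[
\binom{k}{i}\binom{n-k}{k-i} \;=\; |S_i(\alpha)| \;\le\; s.
\]

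The contradiction should come from the relation $(s+1)(st+1)=\binom{n}{k}$, which constrains $s$ to be small relative to $\binom{n}{k}$. By Lemma~\ref{useful}(iii) one has $s<\binom{n}{k}^{2/5}$, so it would suffice to establish the binomial inequality
\[
\binom{k}{i}\binom{n-k}{k-i} \;>\; \binom{n}{k}^{2/5}
\]
for every admissible triple $(n,k,i)$ arising from Hypothesis~\ref{hyp-intrans} (so $2\le k\le n/2-1$, by primitivity, and $0\le i\le k-1$). This inequality is comfortable near the centre of the hypergeometric distribution but becomes tight at the extremes $i=0$ and $i=k-1$, where $|S_i(\alpha)|$ equals $\binom{n-k}{k}$ and $k(n-k)$ respectively.

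Verifying this last inequality at those extremes is the main obstacle. My approach would be to split into cases on $i$ and the size of $k$, use Vandermonde's identity $\binom{n}{k}=\sum_j\binom{k}{j}\binom{n-k}{k-j}$ to manipulate ratios, and invoke Higman's divisibility condition (Lemma~\ref{lem:s+tdiv}(ii)) to cut the list of potentially offending small triples $(n,k,i)$ down to a finite set to be checked by hand. If flag-transitivity is assumed (the setting of Theorem~\ref{theorem:alternatinggroups}(b), in whose proof this lemma is ultimately used) there is a much cleaner finish: $S_i(\alpha)\subseteq\ell$ would force the large subgroup $G_\alpha=(S_k\times S_{n-k})\cap G$ to fix the single line $\ell$ (because $\ell$ is determined by $\alpha$ together with any element of $S_i(\alpha)$), contradicting the fact that flag-transitivity makes $G_\alpha$ act transitively on the $t+1\ge 3$ lines through $\alpha$.
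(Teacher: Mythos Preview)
Your reduction to $S_i(\alpha)\subseteq\ell$ is sound, but the proposed numerical contradiction does not close. The inequality $\binom{k}{i}\binom{n-k}{k-i}>\binom{n}{k}^{2/5}$ fails for infinitely many admissible triples: take $i=0$ and $n=2k+1$, so that $|S_0(\alpha)|=\binom{k+1}{k}=k+1$, whereas Lemma~\ref{useful}(ii) forces $s+1>\binom{2k+1}{k}^{1/4}$, which grows like $4^{k/4}$. Thus for all large $k$ the containment $S_0(\alpha)\subseteq\ell$ is numerically compatible with $|\ell|=s+1$, and your appeal to divisibility to reduce the exceptions to a finite list is unsubstantiated: you would have to rule out $\binom{2k+1}{k}=(s+1)(st+1)$ with feasible $(s,t)$ for every large $k$, and no mechanism for that is offered. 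The flag-transitive finish you sketch is correct, but Hypothesis~\ref{hyp-intrans} assumes only point-primitivity, so that argument does not prove the lemma as stated.

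The paper avoids numerics entirely. Having shown $S_i(\alpha)\subseteq\ell$, it passes to the generalised Johnson graph on the $k$-subsets of $\{1,\ldots,n\}$ (adjacency meaning intersection of size exactly $i$), which is connected, and proves by induction on distance from $\alpha$ that \emph{every} $k$-subset lies on $\ell$. For the inductive step: if $\alpha_2,\alpha_3\in\ell$ are Johnson-adjacent and $\alpha_1\notin\ell$ is a further Johnson-neighbour of $\alpha_2$, then $\alpha_1$ and $\alpha_3$ cannot be collinear in $\Q$ (otherwise $\alpha_1$ would be collinear with two points of $\ell$ and hence lie on $\ell$); transporting $\alpha_2$ to $\alpha$ by point-transitivity of $G$ then produces two members of $S_i(\alpha)$ that are not collinear, contradicting $S_i(\alpha)\subseteq\ell$. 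Hence all $\binom{n}{k}$ points of $\Q$ lie on a single line, which is absurd.
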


\begin{proof}
Let $\ell$ be the line through $\alpha$ and $\beta$.  Towards a contradiction, suppose that whenever $|\alpha \cap
\beta'|=i$, $\beta'$ is collinear with $\beta$ and hence by Lemma~\ref{share-i-collinear}, $\beta'$ is on $\ell$.

Consider the generalised Johnson graph formed on the $k$-subsets of $\{1,\ldots, n\}$, with two
vertices adjacent if and only if their intersection has cardinality $i$.  Clearly, this is a
connected graph.  We prove by induction on $d$, the distance between $\alpha$ and $\alpha_1$ in this graph,
that all of the vertices in this graph must be on $\ell$.  This will be a contradiction, since not
every point of $\Q$ is on a single line.

The assumption made in the first paragraph tells us that all of the neighbours of $\alpha$ in this graph
are on $\ell$, establishing the base case of $d=1$, and since $\alpha$ itself is on $\ell$, we have the
base case of $d=0$ also.  Suppose that $\alpha_1$ is at distance $d+1$ from $\alpha$.  Then it has a neighbour
$\alpha_2$ that is at distance $d$ from $\alpha$, so by induction, $\alpha_2$ is on $\ell$.  Also, $\alpha_2$ has a
neighbour $\alpha_3$ that is at distance $d-1$ from $\alpha$, so by induction, $\alpha_3$ is on $\ell$.  Now if
$\alpha_1$ were not on $\ell$, $\alpha_2$ would have two neighbours $\alpha_1$ and $\alpha_3$, for which $|\alpha_2 \cap
\alpha_1|=|\alpha_2\cap \alpha_3|=i$ but $\alpha_1$ and $\alpha_3$ are not on the same line, so by vertex-transitivity, the same
would have to be true for $\alpha$, contradicting our assumption.  This completes the induction.
\qed\end{proof}

\begin{lemma}\label{A_n-bound-n}
Under Hypothesis~\ref{hyp-intrans}, $n \le 3k-2k_1$, where $k_1$ is the maximum
cardinality of the intersection of two collinear points.
\end{lemma}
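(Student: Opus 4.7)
The plan is to argue by contradiction, assuming $n \geq 3k - 2k_1 + 1$, and to exhibit two points on a common line whose set-theoretic intersection exceeds $k_1$, contradicting the maximality of $k_1$. Fix collinear $\alpha, \beta$ attaining $|\alpha \cap \beta| = k_1$, write $I = \alpha \cap \beta$, $A = \alpha \setminus \beta$, $B = \beta \setminus \alpha$, and $C = \{1, \ldots, n\} \setminus (\alpha \cup \beta)$, so that $|I| = k_1$, $|A| = |B| = k - k_1$, and $|C| = n - 2k + k_1 \geq k - k_1 + 1$. Let $\ell$ be the line through $\alpha$ and $\beta$. The main observation, combining Lemma~\ref{share-i-collinear} with the absence of triangles in $\Q$, is that every $k$-subset $\mu$ with $|\mu \cap \alpha| = |\mu \cap \beta| = k_1$ is collinear with both $\alpha$ and $\beta$, and hence lies on $\ell$.

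For each $(k - k_1)$-subset $B'$ of $C$, the set $\gamma(B') := I \cup B'$ trivially satisfies $|\gamma(B') \cap \alpha| = |\gamma(B') \cap \beta| = k_1$, so it belongs to $\ell$. In the generic case $k - k_1 \geq 2$, the inequality $|C| \geq k - k_1 + 1$ allows me to pick two distinct $(k - k_1)$-subsets $B_1', B_2'$ of $C$ that share at least one element (fix one common element and swap out another). The two points $\gamma(B_1'), \gamma(B_2')$ then lie on $\ell$ but satisfy
\[
|\gamma(B_1') \cap \gamma(B_2')| = k_1 + |B_1' \cap B_2'| > k_1,
\]
which by Lemma~\ref{share-i-collinear} yields a collinear pair of $k$-subsets whose intersection strictly exceeds $k_1$, contradicting the maximality of $k_1$.

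The delicate case is $k - k_1 = 1$, i.e. $k_1 = k - 1$, where the sunflower construction collapses because all $(k - k_1)$-subsets of $C$ are singletons and hence pairwise disjoint. Here $|C| \geq 2$ and I introduce a second family of points on $\ell$. For $k \geq 2$, choose $i \in I$ and set $\delta := (I \setminus \{i\}) \cup \{a, b\}$ where $A = \{a\}$, $B = \{b\}$; then $|\delta \cap \alpha| = |\delta \cap \beta| = k - 1$, so $\delta \in \ell$. For any $c \in C$, the sunflower point $\gamma := I \cup \{c\}$ also lies on $\ell$, and $|\gamma \cap \delta| = k - 2$. Since $\gamma$ and $\delta$ are collinear, Lemma~\ref{share-i-collinear} forces the value $k - 2$ into the set of intersection sizes realised by collinear pairs. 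Iterating this swap (trading further elements of $I$ against elements of $C$ while toggling $a$ and $b$) yields, stage by stage, pairs of points on $\ell$ meeting in $k - 3, k - 4, \ldots, 0$ points, eventually placing every integer in $\{0, 1, \ldots, k - 1\}$ in the collinear spectrum. But then every pair of distinct $k$-subsets would be collinear, which contradicts the fact that the collinearity graph of a thick generalised quadrangle is a proper strongly regular graph rather than a complete graph.

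The main obstacle is the inductive bookkeeping in the second case: at each stage one must construct two points on $\ell$ whose intersection has the prescribed next-smaller size, which requires enough free elements in $I$ and $C$ to perform the next swap. The subcase $k = 2$ has to be verified directly (there the single point $\{a, b\}$ and any $\gamma = \{i, c\}$ both lie on $\ell$ with empty intersection, so $0$ immediately enters the spectrum and the collinearity graph collapses to the complete graph), while for $k \geq 3$ the bound $|C| \geq 2$ together with $|I| = k - 1 \geq 2$ provides just enough room to drive the induction through to intersection size $0$.
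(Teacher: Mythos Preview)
Your argument for the case $k_1 \le k-2$ is correct and is genuinely different from the paper's. The paper finds an auxiliary point $\beta'$ collinear with $\alpha$ but not with $\beta$ (using that $|\beta\cap\beta'|=k-1>k_1$), and then exhibits a fourth point $\alpha'$ collinear with all of $\alpha,\beta,\beta'$, yielding two distinct lines through $\alpha$ both containing $\alpha'$. Your ``sunflower'' trick is slicker: you put two distinct points $\gamma(B_1'),\gamma(B_2')$ directly on $\ell$ whose intersection visibly exceeds $k_1$, contradicting maximality without ever leaving $\ell$.

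In the case $k_1=k-1$, however, your write-up has a real gap. You assert that with $|C|\ge 2$ and $|I|=k-1\ge 2$ there is ``just enough room to drive the induction through to intersection size $0$''. That is not what happens. At stage $m$ your swap produces $\mu=(I\setminus J)\cup\{a,b\}\cup C'$ with $|J|=m$, $|C'|=m-1$ and pairs it with $\gamma_c=I\cup\{c\}$ for $c\in C\setminus C'$; this requires $m\le k-1$ and $m\le |C|$. Hence the iteration only runs to $m=\min(k-1,\,|C|)$, placing $k-1,k-2,\ldots,k-1-\min(k-1,|C|)$ in the spectrum. With merely $|C|\ge 2$ you do \emph{not} reach $0$ in general. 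The argument is salvageable, because the smallest attainable intersection of two $k$-subsets of $\{1,\dots,n\}$ is $\max(0,2k-n)$, and since $|C|=n-k-1$ your iteration stops precisely at $k-1-|C|=2k-n$ when $n<2k$ and at $0$ when $n\ge 2k$; either way every attainable intersection size lands in the spectrum and the collinearity graph collapses. But this is the crux of the bookkeeping and it is exactly what your sketch omits.

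By contrast the paper handles $k_1=k-1$ in one stroke via Lemma~\ref{k-1-non-collinear-pt}: it produces $\beta'$ with $|\alpha\cap\beta'|=k-1$ not collinear with $\beta$, normalises to $\beta'=\{2,\dots,k,k+2\}$, and then the single point $\alpha'=\{1,\dots,k-1,k+2\}$ meets each of $\alpha,\beta,\beta'$ in $k-1$ elements, forcing $\alpha'$ onto two distinct lines through $\alpha$. No induction, no spectrum; the contradiction is immediate. You never invoke Lemma~\ref{k-1-non-collinear-pt}, and the price is the inductive apparatus whose termination you have not correctly accounted for.
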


\begin{proof}
Towards a contradiction, suppose that $n \ge 3k-2k_1+1$.

By Lemma~\ref{share-i-collinear} every two points whose intersection has cardinality $k_1$ will be
collinear.  Let $\alpha=\{1, \ldots, k\}$ and $\beta=\{1,\ldots, k_1, k+1, \ldots,
2k-k_1\}$ be two collinear points.

If $k_1=k-1$ then $\beta=\{1,\ldots, k-1, k+1\}$ and by Lemma~\ref{k-1-non-collinear-pt} there is some
$\beta'$ such that $|\alpha \cap \beta'|=k-1$ but $\beta'$ is not collinear with $\beta$.  Now, since $\beta'$ is not collinear with $\beta$, Lemma~\ref{share-i-collinear} tells us that $|\beta \cap \beta'| \neq k-1$, so
without loss of generality, $\beta'=\{2,\ldots, k, k+2\}$.  Now consider the point $\alpha'=\{1, \ldots,
k-1,k+2\}$.  The cardinality of the intersection of this set with $\alpha$, $\beta$, and $\beta'$ is
$k-1=k_1$ (in each case), so by Lemma~\ref{share-i-collinear}, $\alpha'$ is collinear with both $\alpha$ and
$\beta$ so must be on the unique line joining $\alpha$ and $\beta$. Now $\alpha'$ is also collinear with both $\alpha$ and
$\beta'$ so must be on the unique line joining $\alpha$ and $\beta'$; since these two lines are not the same and $\alpha$ is their point of intersection, we
have a contradiction.

Thus we may assume $k_1 \le k-2$.  Now we let $\beta'=\{1, \ldots, k_1, k+2, \ldots, 2k-k_1+1\}$. The
cardinality of $\alpha \cap \beta'$ is $k_1$, so $\alpha$ and $\beta'$ are collinear by Lemma~\ref{share-i-collinear};
the cardinality of $\beta \cap \beta'$ is $k-1>k_1$, so by definition of $k_1$, $\beta$ and $\beta'$ cannot be
collinear.  Consider the point $\alpha'=\{1, \ldots, k_1, 2k-k_1+2, \ldots, 3k-2k_1+1\}$.  (Since $n \ge
3k-2k_1+1$ this set is a point of $\Q$.)  The cardinality of the intersection of this set with
$\alpha$, $\beta$, and $\beta'$ is $k_1$ (in each case), so by Lemma~\ref{share-i-collinear}, $\alpha'$ is collinear
with $\alpha$, $\beta$, and $\beta'$, not all of which are collinear; as before, this is a contradiction.
\qed\end{proof}

\begin{theorem}\label{A_n-intrans-done}
Under Hypothesis~\ref{hyp-intrans}, $n=6$ and $s=t=2$.
\end{theorem}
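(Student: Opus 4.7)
The plan is to convert the three constraints — the upper bound $n\le 3k-2k_1$ from Lemma~\ref{A_n-bound-n}, the equation $\binom{n}{k}=(s+1)(st+1)$, and Higman's inequality — into a contradiction except when $(n,k,s,t)=(6,2,2,2)$.

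First I would reduce to the regime $k\le (n-1)/2$. The actions of $G$ on $k$-subsets and on $(n-k)$-subsets of $\{1,\ldots,n\}$ are equivalent under complementation (both point stabilisers equal $S_k\times S_{n-k}$, intersected with $G$), so we may swap $k$ with $n-k$ if necessary; primitivity on points additionally rules out $k=n/2$. Thus $n\ge 2k+1$, and combining with $n\le 3k-2k_1$ gives $k\ge 2k_1+1$. The case $k=2$ then forces $k_1=0$ and $n\in\{5,6\}$; a direct check of $\binom{n}{2}=(s+1)(st+1)$ shows that only $n=6$ produces thick parameters, yielding $s=t=2$ as desired.

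For $k\ge 3$ the goal is contradiction. When $k_1=0$, collinear points are disjoint $k$-subsets of $[n]$, so $s+1\le\lfloor n/k\rfloor\le 3$; Higman then forces $t\le 4$ and so the number of points is at most $27$, contradicting $\binom{n}{k}\ge\binom{2k+1}{k}\ge\binom{7}{3}=35$. When $k_1\ge 1$, I would double-count on a line $\ell$ with points $\alpha_0,\ldots,\alpha_s$: writing $d_x=|\{i:x\in\alpha_i\}|$ for $x\in\{1,\ldots,n\}$, we have $\sum_x d_x=(s+1)k$ and
\[
\sum_x\binom{d_x}{2}=\sum_{i<j}|\alpha_i\cap\alpha_j|\le\binom{s+1}{2}k_1,
\]
since every pair of distinct points on a line is collinear and hence shares at most $k_1$ elements. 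Jensen's inequality applied to $y\mapsto\binom{y}{2}$, together with $n\le 3k-2k_1$, then yields
\[
s\le\frac{k(n-k)}{k^2-nk_1}\le\frac{2k}{k-2k_1},
\]
where the denominator is positive because $k^2-nk_1\ge (k-k_1)(k-2k_1)\ge 1$ under $k\ge 2k_1+1$.

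Combining with Lemma~\ref{useful}(ii), which states $\binom{n}{k}<(s+1)^4$, yields a polynomial-in-$k$ upper bound on $\binom{n}{k}$ which contradicts $\binom{n}{k}\ge\binom{2k+1}{k}$ for all sufficiently large $k$; an elementary comparison shows the contradiction appears for even $k\ge 8$ and odd $k\ge 11$. The remaining finitely many triples $(k,k_1,n)$ with $3\le k\le 9$ would be dispatched by direct inspection: in each case one verifies that no integer $s$ in the interval $[2,\,2k/(k-2k_1)]$ makes $s+1\mid\binom{n}{k}$ and simultaneously makes $t=(\binom{n}{k}/(s+1)-1)/s$ an integer in $[2,s^2]$. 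The main obstacle is this concluding enumeration, which though routine requires a small case analysis; the crucial reduction making it tractable is that the combined upper bound on $s$ together with Higman's inequality leaves only a handful of candidate divisors of $\binom{n}{k}$ to test.
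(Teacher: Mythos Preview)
Your approach is correct but takes a genuinely different route from the paper for the case $k_1\ge 1$. Both proofs open identically: swap $k$ with $n-k$, combine $n\le 3k-2k_1$ with the lower bound on $n$ to get $k_1\le(k-1)/2$, and handle $k_1=0$ via the pigeonhole bound $(s+1)k\le n\le 3k$. Where they diverge is that the paper, using Lemma~\ref{share-i-collinear}, exhibits an explicit point $\alpha'=\{k_1+1,\ldots,2k_1,\,k+1,\ldots,k+k_1,\,2k-k_1+1,\ldots,3k-3k_1\}$ meeting each of $\alpha,\beta,\beta'$ in exactly $k_1$ elements; this forces $\alpha'$ to be collinear with three points not all on one line, an immediate contradiction with no further cases. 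Your argument instead never invokes Lemma~\ref{share-i-collinear}: you use only the definition of $k_1$ as an upper bound on $|\alpha_i\cap\alpha_j|$ along a line, derive $s\le 2k/(k-2k_1)$ by double-counting and convexity, and then play this against $\binom{n}{k}<(s+1)^4$ to get an asymptotic contradiction, leaving a residual finite check for $3\le k\le 9$.

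The paper's construction is shorter and uniform (no computation, no leftover cases), but depends on the structural Lemma~\ref{share-i-collinear}. Your method is more self-contained and uses only standard counting; the price is the closing enumeration, which you assert but do not perform. That enumeration does succeed---for each admissible $(k,k_1,n)$ no integer $s\in[2,\,2k/(k-2k_1)]$ gives an integer $t\in[2,s^2]$ with $(s+1)(st+1)=\binom{n}{k}$---so the outline is sound, but a finished proof must record it. In fact the analytic inequality $(2k/(k-2k_1)+1)^4<\binom{2k+1}{k}$ already bites for all but the extremal pairs $(k,k_1)\in\{(3,1),(5,2),(7,3),(9,4)\}$, so only those require genuine inspection. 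Your side remark that primitivity excludes $n=2k$ (since $S_k\times S_k<S_k\wr S_2$) is correct and slightly sharper than what the paper actually uses.
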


\begin{proof}
Taking $k$ to be the smaller of $k$ and $n-k$, we can assume without loss of generality that $n \ge
2k$.  As before, let $k_1$ be the maximum cardinality of the intersection of two collinear
points. By Lemma~\ref{A_n-bound-n}, we have $n \le 3k-2k_1$.  Combining this with $n \ge 2k$ yields $k_1 \le k/2$.

 By Lemma~\ref{share-i-collinear} every two points whose intersection has cardinality $k_1$ will be
 collinear.  Let $\alpha=\{1, \ldots, k\}$ and $\beta=\{1,\ldots, k_1, k+1, \ldots,
 2k-k_1\}$ be two collinear points. Now, since $\Q$ is thick, there are at least three points on each line.  Each point is a set of $k$
elements, and their pairwise intersections are at most $k_1$, so by inclusion-exclusion, the total
number of elements is at least $3k-3k_1$.  That is, we have $n \ge 3k-3k_1$. Thus, as in the second part of the proof of  Lemma~\ref{A_n-bound-n},  $\beta'=\{1, \ldots, k_1, k+2, \ldots, 2k-k_1+1\}$ is a point of $\mathcal{Q}$. Observe that $\alpha$  is collinear with both $\beta$ and $\beta'$, but $\beta$ and $\beta'$ are not collinear.

Consider the point $\alpha'=\{k_1+1, \ldots, 2k_1, k+1, \ldots, k+k_1, 2k-k_1+1, \ldots, 3k-3k_1\}$.
Since $k_1 \le k/2$, this is a set of $k_1+k_1+(k-2k_1)=k$ distinct elements from $\{1, \ldots,
n\}$, so is a point of $\Q$.  If $k_1\ge 1$ then the cardinality of the intersection of this set
with $\alpha$, $\beta$, and $\beta'$ is $k_1$ (in each case), so by Lemma~\ref{share-i-collinear}, $\alpha'$ is
collinear with $\alpha$, $\beta$, and $\beta'$, not all of which are collinear; as in the proof of Lemma~\ref{A_n-bound-n}, this is a
contradiction.  On the other hand, if $k_1=0$ then $3k-3k_1 \le n \le 3k-2k_1$ implies $n=3k$.  In
this case, there can be only 3 points on any line since there are only 3 pairwise disjoint sets of
cardinality $k$ in a set of cardinality $3k$, so $s=2$, forcing (by Higman's inequality, Lemma~\ref{lem:s+tdiv}(i)) $t \le 4$.  Hence the number of
points in $\Q$ is at most $27$. We also know that the number of points is $\binom{n}{k}=\binom{n}{n/3}$.  Since $\binom{9}{3}>27$
and $\Q$ is thick, we obtain $n = 6$, which gives $t=2$ as claimed.  \qed\end{proof}

%
%

\subsection{Imprimitive stabiliser}
In this subsection we deal with the possibility that $G$ is $A_n$ or $S_n$, acts primitively on the
set of points of the generalised quadrangle $\Q$, and the stabiliser of a point of the generalised
quadrangle is imprimitive in its action on $\{1, \ldots, n\}$. In this case we can identify the
points of $\Q$ with partitions of a set of cardinality $n$ into $b$ parts of cardinality $a$, where
$ab=n$ and $a, b >1$.

We first show that we can assume $b \ge 3$, since $b=2$ reduces to the intransitive case dealt with
in the previous subsection.

\begin{lemma}\label{imprim-b-neq-2}
Suppose that $G=A_n$ or $S_n$ acts primitively on the set of points of a thick generalised quadrangle and
$G_\alpha$ is the stabiliser of a partition of $\{1,\ldots,n\}$ into $b$ parts. Then $b\neq 2$.
\end{lemma}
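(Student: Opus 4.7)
Plan. Suppose for contradiction that $b=2$, so $n = 2a$ with $a \ge 2$ and the points of $\Q$ are identified with unordered partitions $\alpha = \{A,\bar A\}$ of $\{1,\ldots,n\}$ into two blocks of size $a$. The natural strategy is to imitate, in the partition setting, the combinatorial method used in the previous subsection: replace the single intersection-size invariant of two $k$-subsets by a suitable invariant of a pair of partitions, then rerun the analogues of Lemmas~\ref{share-i-collinear}--\ref{A_n-bound-n} and Theorem~\ref{A_n-intrans-done}.

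For two distinct partitions $\alpha = \{A,\bar A\}$ and $\beta = \{B,\bar B\}$, I would introduce the invariant
\[
c(\alpha,\beta) := \min\bigl(|A\cap B|,\,|A\cap \bar B|\bigr) \in \{1,\ldots,\lfloor a/2\rfloor\},
\]
which is symmetric and well-defined independently of the choice of representatives. Since $G$ contains $A_n$, a short argument -- with the same kind of parity adjustment that appears in the proof of Lemma~\ref{share-i-collinear} -- shows that $G$ is transitive on ordered pairs of distinct partitions with a prescribed value of $c$. Consequently, the direct analogue of Lemma~\ref{share-i-collinear} holds: if some collinear pair of partitions has $c$-value $i$, then every pair of partitions with $c$-value $i$ is collinear.

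Let $c_1$ denote the maximum value of $c$ over collinear pairs. Following the template of Lemmas~\ref{k-1-non-collinear-pt}--\ref{A_n-bound-n} and of Theorem~\ref{A_n-intrans-done}, I would exhibit explicit partitions whose pairwise $c$-values force them to be pairwise collinear yet cannot all sit on a single line unless an arithmetic inequality between $n$, $a$ and $c_1$ is satisfied. Combining this with thickness ($s+1 \ge 3$) should either yield an immediate contradiction or restrict $(n,a,c_1)$ to a small explicit list. The surviving candidates are then excluded arithmetically, using that the number of points $v = \tfrac{1}{2}\binom{n}{a}$ must factor as $(s+1)(st+1)$ with $s,t \ge 2$ and must satisfy both Higman's inequality and the divisibility condition of Lemma~\ref{lem:s+tdiv}.

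The main obstacle is the bookkeeping in the partition analogue of Lemma~\ref{A_n-bound-n}. In the intransitive case, three collinear points correspond to three $k$-subsets and involve only three intersection numbers; by contrast, three collinear partitions jointly refine $\{1,\ldots,n\}$ into up to $2^3 = 8$ cells, whose sizes must be tracked under the constraints $|A_i|=|\bar A_i|=a$ and $c(\alpha_i,\alpha_j)\le c_1$. Once this case analysis is organised, the construction of a fourth partition that is pairwise collinear with three prescribed partitions but not mutually collinear with them -- mirroring the contradiction produced in Theorem~\ref{A_n-intrans-done} -- is expected to go through with only minor modifications.
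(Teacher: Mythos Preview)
Your plan is plausible but laborious; the paper takes a much shorter route that sidesteps all the combinatorial bookkeeping you anticipate. The key observation is that when $b=2$ the action can be \emph{reduced} to the intransitive case already handled in the previous subsection, rather than reproved in partition language.

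Here is the paper's argument. If $b=2$ then $n=2a$ is even and $G_\alpha$, being the stabiliser of a partition into two blocks of size $a$, is transitive on $\{1,\ldots,n\}$. Hence $G=G_1G_\alpha$, so the one-point stabiliser $G_1$ (isomorphic to $A_{n-1}$ or $S_{n-1}$) already acts transitively on the points of $\Q$. Moreover $G_1\cap G_\alpha=(S_{a-1}\times S_a)\cap G$ is a maximal subgroup of $G_1$, so $G_1$ is in fact \emph{primitive} on the points of $\Q$, with point stabiliser visibly intransitive on $\{2,\ldots,n\}$. Theorem~\ref{A_n-intrans-done} then applies directly to $G_1$ and forces $n-1=6$, contradicting the evenness of $n$.

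What each approach buys: your method, if the eight-cell case analysis is carried through, would be self-contained in the partition setting and would not appeal to the maximality of $(S_{a-1}\times S_a)\cap G$ in $G_1$. The paper's reduction, by contrast, is essentially a two-line proof that recycles the work already done. The two are secretly the same computation: fixing the element $1$ and replacing each partition $\{A,\bar A\}$ (with $1\in A$) by the $(a-1)$-subset $A\setminus\{1\}$ of $\{2,\ldots,n\}$ sets up a $G_1$-equivariant bijection under which your invariant $c(\alpha,\beta)$ corresponds bijectively to the intersection size used in the intransitive subsection. So the analogues of Lemmas~\ref{share-i-collinear}--\ref{A_n-bound-n} you propose to establish would, once translated, coincide with those lemmas applied to $G_1$; the reduction simply makes this translation explicit up front.
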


\begin{proof}
Towards a contradiction, suppose that $b=2$.  Then $n$ is even, and $a=n/2$.  Since $G_\alpha$ is imprimitive in its action on $\{1, \ldots, n\}$, it acts transitively on $\{1, \ldots, n\}$, so $G=G_1 G_\alpha$.  Hence $G_1$ acts
transitively on the points of $\Q$.

Now, $G_1 \cap G_\alpha= (S_{n/2-1} \times S_{n/2})\cap G$, which is maximal in $G_1$, so $G_1$ acts
primitively on the points of $\Q$, and $\soc(G_1)$ is $A_{n-1}$.  Furthermore, in this action, the
point stabiliser $G_1 \cap G_\alpha$ is intransitive on $\{2, \ldots, n\}$.  Theorem~\ref{A_n-intrans-done} then implies that $n-1=6$, contradicting $n$ being even.  \qed\end{proof}

We summarise in the following hypothesis the situation that will be assumed in almost all of the results
within this subsection.

\begin{hyp}\label{imprim-hyp}
Suppose that $G=A_n$ or $S_n$, $G$ acts primitively on the points of a thick generalised quadrangle $\Q$ of order $(s,t)$, $\alpha$ is a point of $\Q$, and the action of $G_\alpha$ on $\{1, \ldots, n\}$ is imprimitive, so the points
of $\Q$ are identified with partitions of $\{1, \ldots, n\}$ into $b$ parts of cardinality $a$ with
$b\ge 3$.
\end{hyp}

Our approach will be to use Lemma~\ref{sub-GQ-theta} to produce a substructure of $\Q$, or in some
cases, two nested substructures.  First we must show that the substructures produced are in fact thick
generalised quadrangles rather than any of the degenerate cases.  Then we will use
Lemma~\ref{GQ-book-2} and Lemma~\ref{sub-GQ-order} to produce bounds on $n$. This will reduce the
problem to a finite one, and we will use various means to eliminate the remaining possibilities.

\begin{notn}\label{thetas}
Through the rest of this subsection, we will be using two permutations, $\theta_1$ and $\theta_2$.
If $a \ge 3$ then $\theta_1= (1\ 2\ 3)$, and if $a \ge 4$ then $\theta_2=(1\ 2\ 4)$ while if $a=3$,
$\theta_2=(4\ 5\ 6)$.  If $a=2$ then $\theta_1=(1\ 2)(3\ 4)$ and $\theta_2=(1\ 2)(5\ 6)$.  So we
always have $\theta_1, \theta_2 \in A_n \le G$.  Starting with the generalised quadrangle $\Q$, we let $\Q'$ denote the substructure of elements of $\Q$ that are fixed by $\theta_1$, and $\Q''$ the substructure of elements of $\Q$ that are fixed by both $\theta_1$ and $\theta_2$.  We use $v$, $v'$, and $v''$ to denote the number of points of $\Q$, $\Q'$, and $\Q''$ (respectively).
\end{notn}

With this notation in place, we are ready to begin showing that the degenerate cases do not arise in
the situations that interest us.  First we examine some parameters of the substructures and the groups that act on them.

\begin{lemma}\label{v-values}\label{sub-GQ-gp-actions}
Under Hypothesis \ref{imprim-hyp} and using Notation
\ref{thetas}, the values for $v''$, $v/v'$ and $v'/v''$ are given by Table
\ref{tab:ratios}. Table~\ref{tab:actions} records information about the actions of certain subgroups 
of $G$ acting on $\Q'$ and $\Q''$ when $n \ge 15$ that will be needed later.

\newcommand\T{\rule{0pt}{2.6ex}}
\newcommand\B{\rule[-1.2ex]{0pt}{0pt}}
\begin{table}[H]
\begin{center}
\begin{tabular}{cccc}
  $a$ & $v''$ & $v/v'$ & $v'/v''$\\
  \hline 
 $\ge 4$\T &  $\frac{(n-4)!}{(a-4)!(a!)^{b-1}(b-1)!}$ & $\frac{(n-1)(n-2)}{(a-1)(a-2)}$ & $\frac{n-3}{a-3}$ \\ \\
 $3$ & $\frac{(n-6)!}{6^{n/3-2}(n/3-2)!}$ & $\frac{(n-1)(n-2)}{2}$ &$\frac{(n-4)(n-5)}{2}$ \\ \\
 $2$&$\frac{(n-6)!}{2^{n/2-3}(n/2-3)!}$ & $\frac{(n-1)(n-3)}{3}$ & $3(n-5)$\\
 \hline
\end{tabular}
\caption{Values of $v$, $v'$ and $v''$}
\label{tab:ratios}
\end{center}
\end{table}
\begin{table}[H]
\begin{center}
\begin{tabular}{ccc}
  $a$ & $\Q'$ & $\Q''$\\
  \hline
 $\ge 4$ &  $A_{n-3}$ (transitive) & $A_{n-4}$ (transitive) \\
 $3$ & $A_{n-3}$ (primitive) & $A_{n-6}$ (primitive) \\
 $2$ &$(S_4 \times S_{n-4})\cap A_n$ (transitive) & $A_{n-6}$ (primitive) \\
 $2$ &$A_{n-4}$ (3 orbits of equal length)
\end{tabular}
\caption{Certain subgroups of $G$ and their actions on $\Q'$ and $\Q''$ when $n \ge 15$.}
\label{tab:actions}
\end{center}
\end{table}
\end{lemma}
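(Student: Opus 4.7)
The plan is to handle each value of $a$ separately, first characterizing which partitions of $\{1,\ldots,n\}$ are fixed by $\theta_1$ (resp.\ by both $\theta_1$ and $\theta_2$), then counting them to obtain the entries of Table~\ref{tab:ratios}, and finally identifying for Table~\ref{tab:actions} a suitable subgroup of $G$ that normalises $\langle\theta_1\rangle$ (resp.\ $\langle\theta_1,\theta_2\rangle$) and thus acts on the relevant substructure.

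For $a\ge 3$, a partition is fixed by $\theta_1=(1\ 2\ 3)$ if and only if $\{1,2,3\}$ lies in a single part (the only $\theta_1$-orbits are $\{1,2,3\}$ and singletons, and a part of size $a\ge 2$ cannot consist of a single $\theta_1$-fixed element together with part of the orbit $\{1,2,3\}$). When $a\ge 4$ the part containing $\{1,2,3\}$ is obtained by adjoining $a-3$ elements from $\{4,\ldots,n\}$ and then the remaining $n-a$ elements are partitioned freely into $b-1$ blocks of size $a$; an identical argument for $\theta_2=(1\ 2\ 4)$ forces $\{1,2,3,4\}$ to lie in a common part for $\Q''$. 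When $a=3$ the part containing $\{1,2,3\}$ must equal $\{1,2,3\}$, and $\theta_2=(4\ 5\ 6)$ forces $\{4,5,6\}$ to be a part for $\Q''$. Direct enumeration gives $v'$ and $v''$ in each sub-case, and the ratios $v/v'$ and $v'/v''$ reduce to the stated formulas after substituting $ab=n$.

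For $a=2$, the key observation is that $\theta_1=(1\ 2)(3\ 4)$ fixes a partition if and only if $\{1,2,3,4\}$ decomposes as a union of two parts; there are three such pairings $\{\{1,2\},\{3,4\}\}$, $\{\{1,3\},\{2,4\}\}$, $\{\{1,4\},\{2,3\}\}$, and any of them is compatible with an arbitrary pairing of $\{5,\ldots,n\}$, yielding the factor $3$. Each such pairing of $\{1,2,3,4\}$ is in fact stabilised by the entire Klein four group $V=\langle (1\ 2)(3\ 4),\,(1\ 3)(2\ 4)\rangle$, so $\Q'=\mathrm{Fix}(V)$; a short case check imposing $\theta_2=(1\ 2)(5\ 6)$ then forces $\{1,2\},\{3,4\},\{5,6\}$ all to be parts for $\Q''$. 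Counting yields the $a=2$ rows of Table~\ref{tab:ratios}.

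For the actions (Table~\ref{tab:actions}), a subgroup $H\le G$ preserves $\Q'$ provided it normalises $\langle\theta_1\rangle$ (for $a\ge 3$) or $V$ (for $a=2$). When $a\ge 3$ the pointwise stabiliser $A_{\{4,\ldots,n\}}\cong A_{n-3}$ normalises $\langle\theta_1\rangle$ and acts transitively on the free pieces of the fixed partitions, and similarly $A_{n-4}$ or $A_{n-6}$ on $\Q''$; for $a=3$ the resulting action on triple-partitions is known to be primitive once $n-3\ge 12$, which holds since $n\ge 15$. When $a=2$, $V$ is the unique normal Klein four subgroup of $S_{\{1,2,3,4\}}$ and hence $N_{S_n}(V)=S_4\times S_{n-4}$; inside $A_n$ this gives $(S_4\times S_{n-4})\cap A_n$, which is transitive on $\Q'$ because $S_4$ permutes the three pair-types transitively while $S_{n-4}$ rearranges the rest. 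Restricting to $A_{n-4}$ leaves the pair-type invariant, producing three equal orbits, and the action on $\Q''$ reduces to the primitive action of $A_{n-6}$ on pairings of an $(n-6)$-set. The hardest step is the $a=2$ case, where one must recognise that $\mathrm{Fix}(\theta_1)$ is actually fixed by the whole Klein four group $V$ so that the acting group can be taken as large as $S_4\times S_{n-4}$ rather than merely $C_{S_n}(\theta_1)$; without this the claimed transitivity on $\Q'$ would fail.
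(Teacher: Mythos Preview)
Your proposal is correct and follows essentially the same approach as the paper: characterise the partitions fixed by $\theta_1$ (and by $\theta_1,\theta_2$) in each case, count them via orbit--stabiliser to obtain Table~\ref{tab:ratios}, and then identify a natural subgroup of $G$ acting on the relevant set of partitions to establish Table~\ref{tab:actions}. The one notable difference is your Klein four-group detour in the $a=2$ case: the paper simply observes directly that $\Sym(\{1,2,3,4\})\times\Sym(\{5,\ldots,n\})$ preserves the set of partitions in which $\{1,2,3,4\}$ splits as two parts (this is immediate, with no need to realise $\Q'$ as $\mathrm{Fix}(V)$), so your remark that ``without this the claimed transitivity on $\Q'$ would fail'' overstates the necessity of that observation---it is a valid alternative justification, but not required.
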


\begin{proof}
First note that $v=\frac{n!}{(a!)^bb!}$. For $a\ge 3$, the points of $\mathcal{Q}'$ are the
partitions of $\{1,\ldots,n\}$ that have $\{1,2,3\}$ together in one part. The group
$\Sym(\{4,5,\ldots,n\})$ acts transitively on the set of  these partitions with the stabiliser of one such
partition isomorphic to $S_{a-3}\times (S_a\Wr S_{b-1})$. Thus
$v'=\frac{(n-3)!}{(a-3)!(a!)^{b-1}(b-1)!}$.  Furthermore, $\Sym(\{4,5,\ldots,n\})\cap\soc(G)$ is isomorphic to $A_{n-3}$ and still acts transitively on the set of partitions, as listed in the first row of Table~\ref{tab:actions}.  Furthermore, if $a=3$, this action is primitive (see for example \cite{LPS}).
For $a\ge 4$, the points of $\mathcal{Q''}$ are the
partitions of $\{1, \ldots, n\}$ that have $\{1,2,3,4\}$ together in one part, and so $v'$ is as given in the
first row of Table \ref{tab:ratios}.  The values for $v/v'$ and $v'/v''$ follow.  The subgroup of $\soc(G)$ that fixes 1, 2, 3, and 4 is isomorphic to $A_{n-4}$ and acts transitively on the points of $\Q''$.

If $a=3$ then the points of $\mathcal{Q''}$ are the partitions of $\{1,\ldots, n\}$ that have
$\{1,2,3\}$ as one part and $\{4, 5, 6\}$ as another. The group $\Sym(\{7,8,\ldots,n\})$ acts
transitively on the set of such partitions with the stabiliser of one such partition isomorphic to
$S_3\Wr S_{b-2}$. Thus $v''$ is as given in the second row of Table \ref{tab:ratios} and the values
of $v/v'$ and $v'/v''$ follow.  Also, $\Sym(\{7,8,\ldots,n\})\cap \soc(G)$ is isomorphic to $A_{n-6}$ and (similar to the action on $\Q'$ discussed above) acts primitively on these partitions.

Finally, when $a=2$, the points of $\mathcal{Q}'$ are the partitions of $\{1,\ldots,n\}$ that have
two parts contained in $\{1,2,3,4\}$. The group $\Sym(\{1,2,3,4\})\times \Sym(\{5,6,\ldots,n\})$
acts transitively on the set of such partitions with the stabiliser of one such partition isomorphic
to $D_8\times (S_2\Wr S_{b-2})$. Thus $v'=\frac{3(n-4)!}{2^{b-2}b!}$. 
It is straightforward to see that $A_{n-4}$ (acting on $\{5, \ldots,
n\}$) has 3 orbits of equal length on the points of $\Q'$, consisting of all partitions that have 1
and 2 in the same part; those that have 1 and 3 in the same part; and those that have 1 and 4 in the
same part.  Since $(\Sym(\{1,2,3,4\})\times \Sym(\{5,6,\ldots,n\}))\cap\soc(G)$
permutes these orbits, it acts transitively on the points of $\Q'$.

Also when $a=2$, the points of $\mathcal{Q}''$
are the partitions of $\{1, \ldots, n\}$ that include $\{1,2\}$, $\{3,4\}$ and $\{5,6\}$. The group
$\Sym(\{7,8,\ldots,n\})$ acts transitively on such partitions with stabiliser isomorphic to $S_2\Wr
S_{b-3}$ and so $v''$ is as given in the third row of Table \ref{tab:ratios}. The values of $v/v'$
and $v/v''$ follow.
The action of $\Sym(\{7,8,\ldots,n\})\cap\soc(G)$ on the points of $Q''$ is analogous to the action of $A_{n-6}$ on
the points of $\Q''$ when $a=3$. This action is primitive since
$b-3\neq 4$ \cite{LPS}, as $n \ge 15$.
\qed\end{proof}




For some of the results to come in this subsection, the following bound will prove useful.

\begin{lemma}\label{bound-N}
When $a \ge 4$ and $b \ge 3$ and $n \ge 16$ we have
$\frac{n!}{(a!)^bb!} \ge (2.2)^n$.
\end{lemma}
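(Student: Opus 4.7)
The plan is to reformulate the desired bound as $R(a,b) \ge 1$, where
\[
R(a,b) \;:=\; \frac{(ab)!}{(a!)^{b}\,b!\,(2.2)^{ab}},
\]
and to verify it via two monotonicity steps followed by direct computation at two small base cases. The hypothesis $n \ge 16$ will turn out to be exactly what is needed to reduce every admissible pair $(a,b)$ to a base case that actually satisfies the bound.

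\smallskip
\noindent\textbf{Step 1 (monotonicity in $b$).} A direct cancellation gives
\[
\frac{R(a,b+1)}{R(a,b)} \;=\; \frac{\binom{ab+a}{a}}{(b+1)\,(2.2)^{a}}.
\]
Applying the crude bound $\binom{n}{k}\ge (n/k)^{k}$ yields $\binom{ab+a}{a}\ge(b+1)^{a}$, so the ratio is at least $(b+1)^{a-1}/(2.2)^{a}$. For $a\ge 4$ and $b\ge 3$ this is at least $(4/2.2)^{a-1}/2.2\ge(4/2.2)^{3}/2.2>2$, and in particular $R(a,b+1)\ge R(a,b)$ throughout the range of interest.

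\smallskip
\noindent\textbf{Step 2 (monotonicity in $a$).} The same kind of cancellation gives
\[
\frac{R(a+1,b)}{R(a,b)} \;=\; \frac{\prod_{i=1}^{b}(ab+i)}{(a+1)^{b}(2.2)^{b}} \;\ge\; \left(\frac{ab+1}{2.2(a+1)}\right)^{\! b},
\]
and the right-hand side is at least $1$ exactly when $a(b-2.2)\ge 1.2$, which holds for every $a\ge 2$ and $b\ge 3$. Hence $R(a+1,b)\ge R(a,b)$ in the regime we care about.

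\smallskip
\noindent\textbf{Step 3 (base cases and conclusion).} The admissible pairs $(a,b)$ with $a\ge 4$, $b\ge 3$, and $ab\ge 16$ split into (i) $a\ge 4,\ b\ge 4$ and (ii) $a\ge 6,\ b=3$, because $ab\ge 16$ excludes the pairs $(4,3)$ and $(5,3)$. By Steps~1 and~2 it suffices to verify $R(4,4)\ge 1$ and $R(6,3)\ge 1$ by hand. Direct arithmetic gives $16!/(4!)^{5}=2{,}627{,}625$ against $(2.2)^{16}<3.02\times 10^{5}$, and $18!/((6!)^{3}\cdot 3!)=2{,}858{,}856$ against $(2.2)^{18}<1.46\times 10^{6}$, so $R(4,4)>8$ and $R(6,3)>1.9$.

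\smallskip
The main obstacle is the $b=3$ base case rather than the inductive step: one can check $R(4,3)<1/2$ and $R(5,3)<1$, so the constant $2.2$ together with the hypothesis $n\ge 16$ is essentially tight, and the monotonicity in $a$ at $b=3$ only just carries $R(6,3)$ past the required threshold. Any further strengthening of the constant would force us either to enlarge the numerical base cases or to refine the binomial estimate used in Step~1.
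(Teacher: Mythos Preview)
Your proof is correct. The two monotonicity ratios are computed correctly, the elementary bounds $\binom{ab+a}{a}\ge(b+1)^{a}$ and $\prod_{i=1}^{b}(ab+i)\ge(ab+1)^{b}$ are valid, and the reduction to the two base cases $(a,b)=(4,4)$ and $(a,b)=(6,3)$ is exactly what the hypothesis $n\ge 16$ permits, since $(4,3)$ and $(5,3)$ are the only excluded pairs. The numerical checks at the base cases are accurate.

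Your route, however, is genuinely different from the paper's. The paper proceeds via Stirling's approximation, splitting into three cases according to the value of $b$: for $b\ge 5$ it uses the crude bound $a!\le(a/r)^{a}$ with $r=4/24^{1/4}$ and manipulates to obtain $(rb^{3/4}/e)^{n}\ge(2.2)^{n}$; for $b=4$ and $b=3$ it substitutes Stirling's bounds directly and appeals to a short computational verification. By contrast, your argument is entirely elementary---no Stirling, no asymptotics---and reduces everything to two exact integer evaluations. What you gain is a self-contained proof with no hidden computer checks; what the paper's approach buys is an explicit asymptotic form (roughly $(b/e)^{n}$ up to polynomial factors) that makes transparent why the bound improves rapidly as $b$ grows, and which could be pushed to sharper constants if needed. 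Your closing remark that the constant $2.2$ is essentially tight at $b=3$ is a nice observation that the paper does not make explicit.
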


\begin{proof}
Stirling's formula \cite{robbins} gives us that for every $n \ge 1$, 
$$\sqrt{2\pi n}\ e^{1/(12n+1)}(n/e)^n \le n! \le \sqrt{2\pi n}\ e^{1/(12n)}(n/e)^n.$$

First suppose that $b \ge 5$.  Let $r=4/(24^{1/4})<2$.  It is straightforward to check that $a! \le
(a/r)^a$ since $a \ge 4$ and the two are equal when $a=4$. Hence we have (using Stirling's formula
to get $n! \ge (n/e)^n$ and the upper bound we have just calculated for $a!$)
$$\frac{n!}{(a!)^b b!} \ge \frac{(n/e)^n}{(a/r)^n b!}=\frac{(br/e)^n}{b!} \ge (r/e)^n b^{n-b} \ge
\left(\frac{r b^{3/4}}{e}\right)^n$$ since $a \ge 4$ implies $b \le n/4$, so $n-b \le 3n/4$.  Since
$b \ge 5$, it is easy to check that $b^{3/4}r/e \ge 2.2$, yielding the desired result.

If $b=4$ then $a=n/4$ and we use Stirling's formula on $n$ and $a$ to obtain
\begin{eqnarray*}
\frac{n!}{(a!)^b b!} & \ge & \frac{\sqrt{2\pi n}\ e^{1/(12n+1)}(n/e)^n}{\sqrt{(2\pi n)/4}^4 e^{4/(3n)}(n/4e)^n 4!}\\
&=&\frac{2 e^{1/(12n+1)}}{3\sqrt{2\pi n}^3 e^{4/(3n)}}4^n
\end{eqnarray*}
and it can be verified computationally that this is at least $(2.2)^n$ since $a \ge 4$ implies $n
\ge 16$.

If $b=3$ then $a=n/3$ and we use Stirling's formula on $n$ and $a$ to obtain
\begin{eqnarray*}
\frac{n!}{(a!)^b b!} & \ge & \frac{\sqrt{2\pi n}\ e^{1/(12n+1)}(n/e)^n}{\sqrt{2\pi n/3}^3 e^{3/(4n)}(n/3e)^n 3!}\\
&=&\frac{\sqrt{3} e^{1/(12n+1)}}{4\pi n e^{3/(4n)}}3^n
\end{eqnarray*}
and it can be verified computationally that this is at least $(2.2)^n$ when $n \ge 16$.  \qed\end{proof}

We now show that there must always be points and lines fixed by $\theta_1$ and by $\theta_2$.

\begin{lemma}\label{lines-exist}\label{points-exist}
Under Hypothesis~\ref{imprim-hyp} and using Notation~\ref{thetas}, if $n\ge 10$ then both $\mathcal{Q}'$ and $\mathcal{Q}''$ contain at
least two points. Moreover, if $n \ge 37$ then both $\Q'$ and $\Q''$ have at least
one line. 
\end{lemma}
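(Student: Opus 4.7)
The containment $\Q'' \subseteq \Q'$ (as $\Q''$ consists of elements fixed by both $\theta_1$ and $\theta_2$, while $\Q'$ is fixed just by $\theta_1$) immediately yields $v' \ge v''$ and shows that every line of $\Q''$ is also a line of $\Q'$. It therefore suffices to establish both claims for $\Q''$ alone.

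For the first claim, I would use the explicit formulas for $v''$ recorded in Table~\ref{tab:ratios}. In each of the three cases $a\ge 4$, $a=3$, $a=2$, I would evaluate $v''$ at the smallest admissible $n\ge 10$ (subject to $b\ge 3$ and $a\mid n$), namely $(a,b,n)=(4,3,12)$, $(3,4,12)$, $(2,5,10)$, where the formulas yield $35$, $60$, $3$ respectively; since for fixed $a$ the quantity $v''$ is monotone non-decreasing in $n$, this gives $v''\ge 2$ for all $n\ge 10$.

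For the second claim, I would apply Lemma~\ref{sub-GQ-theta} to the $\langle\theta_1,\theta_2\rangle$-fixed substructure $\Q''$. The first claim rules out case (ii). For case (i) (no lines), the key observation is that the $v''$ fixed points must be pairwise non-collinear in $\Q$: if two of them were collinear, the unique line of $\Q$ joining them would be fixed by both $\theta_1$ and $\theta_2$, contradicting the assumption that $\Q''$ contains no lines. Hence the fixed points form a partial ovoid, and a standard count of (point,\,line) incidences (each line meets a partial ovoid in at most one point, while each point of the partial ovoid lies on $t+1$ lines, so $v''(t+1)\le (t+1)(st+1)$) yields $v''\le st+1=v/(s+1)$, i.e.\ $s+1\le v/v''$. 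Combined with Lemma~\ref{useful}(ii), which gives $v<(s+1)^4$, this forces
\[
v<(s+1)^4\le (v/v'')^4,\qquad\text{equivalently}\qquad v''<v^{3/4}.
\]

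The remaining task is to show that $v''\ge v^{3/4}$ whenever $n\ge 37$, which contradicts the above and completes the proof. This is done by splitting on $a$ and using the formulas in Table~\ref{tab:ratios} together with a good lower bound on $v=n!/((a!)^b b!)$: for $a\ge 4$ one has $v/v''=\tfrac{(n-1)(n-2)(n-3)}{(a-1)(a-2)(a-3)}\le n^3$ against $v\ge (2.2)^n$ from Lemma~\ref{bound-N}; for $a=3$, $v/v''\le n^4/4$ while Stirling gives $v$ of order $n^{2n/3}$; and for $a=2$, $v/v''=(n-1)(n-3)(n-5)\le n^3$ against $v\sim\sqrt{2}\,(n/e)^{n/2}$ by Stirling. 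In each case the ratio $v/v''$ is polynomial in $n$ while $v$ grows super-polynomially, so $(v/v'')^4\le v$ for all sufficiently large $n$, and I would verify by direct numerical estimate (using the Stirling bounds above) that the critical threshold is below $37$ in every case. The main obstacle is precisely this last numerical verification, particularly the tightest case $a=2$, where the bound is close to sharp near $n=37$ and a careful computation using Stirling's formula is required.
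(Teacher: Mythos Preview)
Your approach is essentially identical to the paper's: reduce to $\Q''$, verify $v''\ge 2$ from the formulas for the first claim, and for the second claim use the partial-ovoid bound $v''\le st+1=v/(s+1)$ together with $v<(s+1)^4$ to obtain $v/v''>v^{1/4}$ (equivalently $v''<v^{3/4}$), then kill this numerically case by case.

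There is one concrete slip in your numerical assessment. You assert that the critical threshold lies below $37$ in every case using the stated bounds, and you single out $a=2$ as the tightest case. In fact the problematic case is $a\ge 4$: with $v/v''\le (n-1)(n-2)(n-3)/6$ and only the crude bound $v\ge (2.2)^n$ from Lemma~\ref{bound-N}, the inequality $v/v''>v^{1/4}$ is not excluded until about $n=50$ (e.g.\ at $n=49$ one has $(n-1)(n-2)(n-3)/6=17296$ while $(2.2)^{49/4}\approx 15500$). The paper records exactly this, obtaining only $n\le 49$ from Lemma~\ref{bound-N}, and then performs a second pass using the \emph{exact} value $v=n!/((a!)^bb!)$ for each admissible pair $(a,n)$ with $n\le 49$ to push the bound down to $n\le 24$. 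By contrast, for $a=3$ and $a=2$ the direct formulas already give $n\le 36$ and $n\le 32$ respectively, so $a=2$ is not the bottleneck. Your plan is sound, but you would need this extra refinement step for $a\ge 4$; the Stirling-type lower bound on $v$ alone does not suffice there.
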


\begin{proof}
The three values for $v''$ given in Table \ref{tab:ratios} are at least 2 for $n\ge 10$. Also every
point of $\mathcal{Q}''$ is contained in $\mathcal{Q}'$. Thus for $n\ge 10$ both $\mathcal{Q}'$ and
$\mathcal{Q}''$ have at least two points.  

Any line fixed by both $\theta_1$ and $\theta_2$ will be fixed by $\theta_1$, so if we show that
$\Q''$ has at least one line, we will be done.

If there were two collinear points of $\Q$
fixed by $\theta_1$ and by $\theta_2$, then the line containing them would also be fixed by
$\theta_1$ and by $\theta_2$.  So if there are no lines in $\Q''$, we must have
$v''$ pairwise non-collinear points in $\Q$.  The number of pairwise non-collinear points in a
generalised quadrangle of order $(s,t)$ is at most $st+1$, so we have $st+1 \ge v''$. Thus we have
$v/(s+1)=st+1 \ge v''$.

By Lemma~\ref{v-values}. the values for $r=v/v''$ can be calculated quickly using Table~\ref{tab:ratios}. Now $v/(s+1)\ge v''=v/r$ implies $r\ge s+1>v^{1/4}$ by
Lemma~\ref{useful}(ii).  Calculations using the lower bound for $v$ given by Lemma~\ref{bound-N} if
$a \ge 4$ (we have $b\ge 3$ by Lemma~\ref{imprim-b-neq-2}) and the formula for $v$ if $a$ is 2 or 3,
show that this inequality forces $n$ to be less than 50: if $a\ge 4$ then $n \le 49$; if $a=3$ then
$n \le 36$; and if $a=2$ then $n \le 32$.  When $a \ge 4$, we can evaluate the inequality again
using each divisor $a$ of $n$ for $n \le 49$ and the corresponding formula for $N$, to see that we
must actually have $n \le 24$.  \qed\end{proof}

\begin{lemma}\label{points-off-line}
Under Hypothesis~\ref{imprim-hyp} and using
Notation~\ref{thetas}, if $n \ge 16$ then neither $\Q'$ nor $\Q''$ has
all of its points on one line.
\end{lemma}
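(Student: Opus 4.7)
The plan is to argue by contradiction, using the explicit counts in Table~\ref{tab:ratios} together with Higman's inequality in the form of Lemma~\ref{useful}(iii). A useful preliminary observation is that it suffices to handle $\Q''$: since every point of $\Q''$ is fixed by $\theta_1$ and hence lies in $\Q'$, the hypothesis that all points of $\Q'$ lie on a single line $\ell$ would force all points of $\Q''$ onto $\ell$ as well. Thus a contradiction in the $\Q''$ case immediately gives the result for $\Q'$ too.

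Suppose for a contradiction that all points of $\Q''$ lie on a single line $\ell$ of $\Q$; by Lemma~\ref{lines-exist} we have $v''\ge 2$, so this is a nontrivial configuration. Since $\ell$ contains exactly $s+1$ points, $v''\le s+1$. Combining with $v>s^{5/2}$ from Lemma~\ref{useful}(iii) gives $s<v^{2/5}$, and hence
\[
v'' \;\le\; s+1 \;<\; v^{2/5}+1.
\]
Equivalently, $v/v'' > v^{3/5}/(1+v^{-2/5})$, which for $v$ large is essentially of order $v^{3/5}$.

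To drive the contradiction, I would then compare both sides of this inequality for all admissible $(a,b)$ with $n=ab\ge 16$, $a\ge 2$ and $b\ge 3$ (the case $b=2$ being excluded by Lemma~\ref{imprim-b-neq-2}). Table~\ref{tab:ratios} exhibits $v/v''$ as a polynomial in $n$ of degree at most $4$ (cubic when $a\ge 4$ or $a=2$, quartic when $a=3$), whereas $v$ grows exponentially in $n$: Lemma~\ref{bound-N} supplies $v\ge (2.2)^n$ when $a\ge 4$, and for $a\in\{2,3\}$ direct Stirling-type estimates on $n!/((a!)^b b!)$ give comparably strong lower bounds. Since $v^{3/5}$ is exponential while $v/v''$ is polynomial, the displayed inequality must fail for all sufficiently large $n$.

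The main obstacle is then to dispose of the finite collection of base cases, namely $(a,n)=(2,16)$, $(4,16)$, $(8,16)$, $(3,18)$, $(6,18)$, $(9,18)$, and so on for $n$ up to some modest threshold where the exponential dominates unconditionally. In each such case, direct substitution into Table~\ref{tab:ratios} and the formula $v=n!/((a!)^b b!)$ shows that $v''$ exceeds $v^{2/5}+1$ by a clear margin. The tightest of these is $(a,n)=(2,16)$, where $v''=945$ and $v^{2/5}\approx 333$, still yielding the desired contradiction; for larger $n$ with the same $a$ the gap widens, so these finite checks complete the argument.
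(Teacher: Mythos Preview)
Your proof is correct and follows essentially the same route as the paper: reduce to $\Q''$, use $v'' \le s+1 < v^{2/5}+1$ via Lemma~\ref{useful}(iii), and compare the polynomial ratio $v/v''$ from Table~\ref{tab:ratios} against the exponential growth of $v$ (via Lemma~\ref{bound-N} for $a\ge 4$, direct formula otherwise). The paper's write-up simply computes the explicit thresholds ($n\le 13,15,14$ for $a\ge 4,\,a=3,\,a=2$ respectively) rather than leaving a residual finite check; note also that your listed base cases $(8,16)$ and $(9,18)$ have $b=2$ and are already excluded by Lemma~\ref{imprim-b-neq-2}.
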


\begin{proof}
If all points of $\Q''$ are on a single line, then there are at most $s+1$ such points, so we have $s+1
\ge v''$. Since $v'\ge v''$, if this inequality leads to a contradiction, it will clearly not be possible for all points of $\Q'$ to be on a single line either. Letting $r=v/v''$ it follows that $r(s+1)\geq v$ and so by Lemma~\ref{useful}(iii),
$r(v^{2/5}+1)>v$. Thus $r>v/(v^{2/5}+1) >v^{1/5}(v^{2/5}-1)$.

By Lemma~\ref{v-values}, the values for $r$ can easily be calculated using Table~\ref{tab:ratios}.  Calculations using
the lower bound for $v$ given by Lemma~\ref{bound-N} if $a \ge 4$ (note that $b\ge 3$) and the
formula for $v$ if $a$ is 2 or 3, show that the inequality $r>v^{1/5}(v^{2/5}-1)$ forces $n$ to be
less than 16: if $a\ge 4$ then $n \le 13$ (so since $n=ab$ we must have $n=12$); if $a=3$ then $n
\le 15$; and if $a=2$ then $n \le 14$.  \qed\end{proof}

\begin{cor}\label{lines-off-point}
Under Hypothesis~\ref{imprim-hyp} and using
Notation~\ref{thetas}, if $n \ge 16$, then neither $\Q'$ nor $\Q''$ has
all of its lines pass through one point.
\end{cor}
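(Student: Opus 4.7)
The plan is to argue by contradiction, exploiting the transitive actions of centralising subgroups identified in Lemma~\ref{sub-GQ-gp-actions}. Suppose all lines of $\Q''$ pass through a distinguished point, placing $\Q''$ in case~(iii) of Lemma~\ref{sub-GQ-theta}; the argument for $\Q'$ is parallel, so I focus on $\Q''$.

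First, consider the generic subcase where $\Q''$ has at least two fixed lines. The distinguished point $p$ is then uniquely determined as the common intersection of any two such lines, so $p$ must be fixed by every subgroup of $G$ that stabilises $\Q''$ as a substructure. By Lemma~\ref{sub-GQ-gp-actions} (see Table~\ref{tab:actions}), a centralising subgroup of $\langle \theta_1, \theta_2 \rangle$---namely $A_{n-4}$ when $a \ge 4$, or $A_{n-6}$ when $a \in \{2,3\}$---acts transitively on the points of $\Q''$. Since $v'' \ge 2$ by Lemma~\ref{points-exist}, transitivity precludes a fixed point, a contradiction. The degenerate configurations are dispatched quickly: $\Q''$ with zero fixed lines belongs to case~(i) of Lemma~\ref{sub-GQ-theta} rather than (iii); and if $\Q''$ has a single fixed line $L$, then either all fixed points lie on $L$---placing $\Q''$ in case~(iv), contradicting Lemma~\ref{points-off-line}---or some fixed point lies off $L$, whereupon $L$ is fixed setwise by the centralising subgroup $H$, and the non-empty on-$L$ and off-$L$ subsets of the fixed-point set are disjoint $H$-invariant subsets, again contradicting transitivity.

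The same strategy applies to $\Q'$ using the transitive subgroup $A_{n-3}$ from Table~\ref{tab:actions} when $a \ge 3$. The main obstacle is the $a = 2$ case for $\Q'$: here the natural centralising subgroup $A_{n-4}$ has three orbits of equal length rather than acting transitively. However, since $v' \ge 6$ for $n \ge 16$, each orbit has size $v'/3 \ge 2$, so no orbit is a singleton, and the unique-distinguished-point argument still rules out a fixed $p$. For the one-line subcase at $a = 2$, one notes that the fixed points on $L$ form a nonempty union of $A_{n-4}$-orbits contained in an $(s+1)$-set, so either on-$L$ is all of $\Q'$ (case~(iv), contradicting Lemma~\ref{points-off-line}) or $v'/3 \le s+1$; combining this last inequality with the identity $v'/3 = v/((n-1)(n-3))$, the factorisation $v = (s+1)(st+1)$, and Higman's inequality (Lemma~\ref{lem:s+tdiv}(i)) forces $st + 1 \le (n-1)(n-3)$, which is numerically incompatible with the values of $v$ realisable by a thick generalised quadrangle for $n \ge 16$.
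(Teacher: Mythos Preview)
Your argument is correct, but the paper's proof is considerably shorter and avoids all of your case-splitting. The paper observes that by Lemma~\ref{sub-GQ-gp-actions} both $\Q'$ and $\Q''$ admit a subgroup acting \emph{transitively} on their points---including $\Q'$ when $a=2$, where the transitive subgroup is $(S_4\times S_{n-4})\cap A_n$ rather than $A_{n-4}$. (This group appears in Table~\ref{tab:actions}; it stabilises $\Q'$ even though it does not centralise $\theta_1$.) Given point-transitivity and at least two points (Lemma~\ref{points-exist}), if all lines pass through one point then by transitivity they pass through \emph{every} point; since two points lie on at most one common line, there is then at most one line, and that line contains every point, contradicting Lemma~\ref{points-off-line}. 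This single observation handles both substructures and all values of $a$ simultaneously.

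Your route---pinning down the distinguished point as the intersection of two fixed lines, and then dealing separately with the one-line configuration---is sound, but the extra work you do for $\Q'$ at $a=2$ (using the three $A_{n-4}$-orbits, bounding $v'/3\le s+1$, and the numerical check on $st+1\le (n-1)(n-3)$) is unnecessary once you use the transitive group the paper already records. Your approach makes explicit why the distinguished point must be fixed by the stabilising subgroup, but the paper's formulation---transitivity forces the ``distinguished'' point to be every point---is the same idea compressed into one line.
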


\begin{proof}
By Lemma~\ref{points-exist}, both $\Q'$ and $\Q''$ have at least 2 points.  By
Lemma~\ref{sub-GQ-gp-actions}, both are point-transitive.  Therefore if all of the lines were to
pass through one point, all of the lines would have to pass through every point.  Since there is a
unique line through any two points, this is only possible if the incidence structure has at most one
line.  By Lemma~\ref{points-off-line}, this cannot occur.  \qed\end{proof}

\begin{lemma}\label{not-dual-grid}
Under Hypothesis~\ref{imprim-hyp} and using
Notation~\ref{thetas}, if $n \ge 9$, then neither $\Q'$ nor $\Q''$ is a
dual grid.
\end{lemma}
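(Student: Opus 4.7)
The plan is to argue by contradiction. A dual grid of order $(1,t')$ has $v' = 2(t'+1)$ points that split canonically into two classes $X_1, X_2$ with $|X_1| = |X_2| = v'/2$, two points being collinear exactly when they lie in different classes. Consequently, any group of automorphisms of a dual grid either preserves $\{X_1,X_2\}$ setwise (with each class a union of orbits) or admits a nontrivial homomorphism to $S_2$ whose kernel is the index-$2$ subgroup fixing each class. I would combine this dichotomy with the point-transitive subgroups of $G$ on $\Q'$ and $\Q''$ supplied by Lemma~\ref{v-values}.

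For $\Q'$ with $a \ge 3$, Table~\ref{tab:actions} gives $A_{n-3}$ acting transitively on $\Q'$; since $n \ge 9$ yields $n-3 \ge 6$, this $A_{n-3}$ is simple nonabelian and has no subgroup of index~$2$. Transitivity rules out preservation of the bipartition (each $X_i$ would be a proper invariant subset), and simplicity rules out a nontrivial homomorphism to $S_2$; so $\Q'$ is not a dual grid.

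The delicate case is $\Q'$ with $a = 2$, where the full transitive subgroup $(S_4\times S_{n-4})\cap A_n$ does admit subgroups of index~$2$. Here I would use the finer structural information from Table~\ref{tab:actions}: the subgroup $A_{n-4}$ (fixing $\{1,2,3,4\}$ pointwise) has three orbits of equal length $v'/3$ on the points of $\Q'$. Since $a = 2$ and $n \ge 9$ force $n$ even and $n\ge 10$, we have $n-4 \ge 6$, so $A_{n-4}$ is simple nonabelian and must therefore preserve each of $X_1, X_2$. Each class is then a union of $A_{n-4}$-orbits of size $v'/3$, which would force $v'/2$ to be a nonnegative integer multiple of $v'/3$; since no value in $\{0,1,2,3\}\cdot v'/3$ equals $v'/2$ for $v'>0$, this is impossible.

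For $\Q''$ the argument runs in parallel, using the transitive actions recorded in Table~\ref{tab:actions}: $A_{n-4}$ when $a\ge 4$, $A_{n-6}$ when $a = 3$, and $A_{n-6}$ when $a = 2$. Each of these is simple nonabelian whenever the degree is at least $5$, and an entirely analogous orbit-counting argument handles the $a = 2$ subcase. The small residual values where the relevant alternating subgroup is too small (namely $n=9$ with $a=3$, giving $v'' = 1$, and $n = 10$ with $a = 2$, giving $v'' = 3$) are ruled out numerically: a dual grid needs at least four points, so the formulas of Table~\ref{tab:ratios} dispose of these cases directly. The only real obstacle is selecting the right subgroup for the $a=2$ case of $\Q'$; once the three-orbit decomposition of $A_{n-4}$ is in hand, the contradiction is purely arithmetic.
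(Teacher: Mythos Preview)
Your proof is correct and essentially identical to the paper's: the paper dualises and argues with the canonical two-class line partition of the resulting grid, while you argue directly with the two-class point partition of the dual grid, and for $a=2$ you make explicit the three-orbit arithmetic that the paper leaves implicit in its claim that ``there must be an element of $A_{n-4}$ that interchanges the two sets''. One minor remark: the dual grids arising from Lemma~\ref{sub-GQ-theta} need not have equal-sized parts a priori, so $|X_1|=|X_2|$ should be deduced from point-transitivity rather than assumed from the outset, and for $\Q''$ with $a=2$ the subgroup $A_{n-6}$ is already transitive (Table~\ref{tab:actions}), so the straightforward simplicity argument applies and no orbit-counting is required.
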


\begin{proof}
Towards a contradiction, suppose that either $\Q'$ or $\Q''$ is a dual grid.  By
Lemma~\ref{sub-GQ-gp-actions} and Table~\ref{tab:actions}, except for $\Q'$ in the case $a=2$, there is some $m \ge n-6$ such
that $A_m$ acts transitively on the points of the substructure, and hence acts transitively on the
lines of the dual structure, which is a grid.  The lines in a grid can be (uniquely) partitioned
into two sets (think of them as the horizontal lines and the vertical lines) such that the lines
within each set are pairwise non-concurrent, but if one line is chosen from each set, they must
intersect. Since $A_m$ acts transitively on the lines, the subgroup of $A_m$ that fixes each of
these sets (setwise) is a subgroup of index 2 in $A_m$. Since $m \ge n-6 \ge 3$, $A_m$ does not have an index 2 subgroup, a contradiction.

In the case of $\Q'$ when $a=2$, again by Lemma~\ref{sub-GQ-gp-actions}, this structure is
point-transitive, so the dual structure, a grid, is line-transitive.  Hence the two parts in the
unique partition of the lines described above (horizontal and vertical) have the same cardinality.
The action of $A_{n-4}$ on this structure has 3 orbits of equal length on the lines
(Lemma~\ref{sub-GQ-gp-actions} and Table~\ref{tab:actions}).  Since the cardinalities of the two sets of lines are equal, there
must be an element of $A_{n-4}$ that interchanges the two sets.  Again, this forces the subgroup
of $A_{n-4}$ that fixes each of these sets of lines (setwise) to be an index 2 subgroup of
$A_{n-4}$, a contradiction.  \qed\end{proof}

\begin{lemma}\label{not-grid}
Under Hypothesis~\ref{imprim-hyp} and using
Notation~\ref{thetas},  if $n \ge 15$ then neither $\Q'$ nor $\Q''$ is a grid.
\end{lemma}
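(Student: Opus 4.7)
The plan is to suppose for contradiction that one of $\Q'$ or $\Q''$ is a grid, and then exploit the group action on the substructure given by Lemma~\ref{sub-GQ-gp-actions} and Table~\ref{tab:actions} to derive a contradiction. In any grid, each point lies on exactly two lines, and the lines split into two parallel classes (``rows'' and ``columns'') with the property that two lines in the same class are disjoint while two lines from different classes meet in exactly one point. Consequently the points of the grid carry two equivalence relations (``same row'' and ``same column''), and these two relations, as an unordered pair, are preserved by every automorphism of the grid.

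By Table~\ref{tab:actions}, for $n\ge 15$ there is a simple alternating group $A_m$ with $m \ge n - 6 \ge 9$ acting on the substructure, which is transitive on points in all cases except $\Q'$ with $a=2$ (where $(S_4 \times S_{n-4})\cap A_n$ is transitive and still contains $A_{n-4}$). Since $A_m$ is simple for $m\geq 5$ and therefore has no subgroup of index $2$, it cannot swap the two parallel classes, and so each of the row and column partitions is preserved individually by $A_m$. In particular, each row and each column is a block of imprimitivity for the action of $A_m$ on the points of the substructure.

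In the cases where this $A_m$-action is primitive, namely $a=3$ on both $\Q'$ and $\Q''$, and $a=2$ on $\Q''$ (as recorded in Table~\ref{tab:actions}), these nontrivial block systems contradict primitivity immediately. For the remaining cases ($a\ge 4$ on both $\Q'$ and $\Q''$, and $a=2$ on $\Q'$) I would argue that $A_m$ must act faithfully on the set $R$ of rows: otherwise the kernel, being normal in the simple group $A_m$, would be all of $A_m$, forcing every row to be setwise fixed; combined with point-transitivity (or, for $a=2$ on $\Q'$, the transitivity of the supergroup $(S_4\times S_{n-4})\cap A_n$) this would force the grid to have only a single row, contradicting the definition of a grid. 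The minimum degree of a faithful permutation representation of $A_m$ is $m$, so $|R|\ge m$ and symmetrically $|C|\ge m$, which yields the lower bound $v'= |R|\,|C|\ge m^2$.

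The main obstacle is finishing these remaining cases, because the bound $v'\ge m^2$ is far weaker than the actual values of $v'$ and $v''$ in Table~\ref{tab:ratios} for the rapidly growing $a,b$. To strengthen it, I would use that the transitivity of $A_m$ on $R\times C$ forces an exact factorisation $A_m = K_R K_C$, where $K_R$ and $K_C$ are the row and column stabilisers in $A_m$; such factorisations of a simple alternating group into two proper subgroups are severely restricted by the Liebeck--Praeger--Saxl classification. Combined with the divisibility conditions on $|R|$ and $|C|$ coming from their being subgroup indices in $A_m$ and with the arithmetic of the explicit formulae in Table~\ref{tab:ratios}, this should eliminate the remaining configurations in the range $n\ge 15$.
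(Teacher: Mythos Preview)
Your opening matches the paper exactly: the two parallel classes, the observation that $A_m$ (being simple with $m\ge 9$) cannot swap them, and the immediate contradiction in the primitive cases listed in Table~\ref{tab:actions}. The divergence, and the gap, is entirely in the non-primitive cases $a\ge 4$ and $a=2$ on $\Q'$.

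For $a\ge 4$ you propose to bound $|R|,|C|\ge m$ via the minimal faithful degree of $A_m$ and then invoke the Liebeck--Praeger--Saxl factorisation classification on $A_m=K_RK_C$. You correctly note that the degree bound alone is useless, but you do not actually carry out the factorisation argument; as written this is a plan, not a proof. The paper's route here is both shorter and complete: rather than studying factorisations, it observes that any block system for $A_m$ on the points of the substructure corresponds to a subgroup strictly between the point stabiliser $(S_{a-(n-m)}\times(S_a\Wr S_{(n-a)/a}))\cap A_m$ and $A_m$. Using the maximality of $S_a\Wr S_{(n-a)/a}$ in $S_{n-a}$ (from \cite{LPS}), there is \emph{exactly one} such intermediate subgroup, namely $(S_{a-(n-m)}\times S_{n-a})\cap A_m$. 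A grid requires two distinct block systems, so this is an immediate contradiction (and when $a=4$ on $\Q''$ the unique candidate collapses to $A_m$ itself, recovering primitivity).

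For $a=2$ on $\Q'$ your faithfulness argument has a wrinkle: you appeal to transitivity of the supergroup $(S_4\times S_{n-4})\cap A_n$ to conclude a single row, but you have not shown that this supergroup preserves the row partition (it is not simple and could swap the two classes). Again the paper bypasses this: working with the transitive group $(S_4\times S_{n-4})\cap A_n$, it lists the \emph{two} subgroups properly between the point stabiliser $(D_8\times(S_2\Wr S_{(n-4)/2}))\cap A_n$ and the full group, so these must be exactly the two block stabilisers. One of them, $(D_8\times S_{n-4})\cap A_n$, has blocks of size $\frac{(n-4)!}{2^{(n-4)/2}((n-4)/2)!}$; since every line of the grid lies inside a line of $\Q$, this forces $s+1\ge \frac{(n-4)!}{2^{(n-4)/2}((n-4)/2)!}$, which together with Lemma~\ref{useful}(iii) gives $n\le 10$.

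In short: the missing idea is to count intermediate subgroups rather than to classify factorisations. This turns the hard cases into a one-line contradiction for $a\ge 4$ and an explicit numerical bound for $a=2$.
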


\begin{proof}
Towards a contradiction, suppose that either $\Q'$ or $\Q''$ is a grid.  As in the proof of
Lemma~\ref{not-dual-grid}, the lines can be (uniquely) partitioned into two sets (think of them as
the horizontal lines and the vertical lines) such that the lines within each set are pairwise
non-concurrent, but if one line is chosen from each set, they must intersect.  By
Lemma~\ref{sub-GQ-gp-actions} and Table~\ref{tab:actions}, except for $\Q'$ in the case $a=2$, there is some $m \ge n-6$ such
that $A_m$ acts transitively on the points of the substructure.  If there were an element of $A_m$
that interchanges the two sets of lines, then the subgroup of $A_m$ that fixes each set of lines
(setwise) would have index 2, a contradiction.  So every element of $A_m$ fixes each of the sets of
lines.  Since $A_m$ acts transitively on the points, it must act transitively on each of the two
sets of lines.  Thus, each set of lines forms a system of imprimitivity on the points of the
structure.  This is clearly impossible when the action of $A_m$ on the points is primitive; by
Lemma~\ref{sub-GQ-gp-actions} and Table~\ref{tab:actions}, this deals with the case $a=3$, and with $\Q''$ when $a=2$ or $a=4$.

If $a \ge 4$ and $x$ is a point of the substructure, then the stabiliser of $x$ in $A_m$ is
$(S_{a-(n-m)} \times (S_a \Wr S_{(n-a)/a}))\cap A_m$.  Now, by \cite{LPS}, $S_a \Wr S_{(n-a)/a}$ is a maximal
subgroup of $S_{n-a}$, so there is a unique group that lies properly between $(S_{a-(n-m)} \times
(S_a \Wr S_{(n-a)/a}))\cap A_m$ and $A_m$, namely $(S_{a-(n-m)} \times S_{n-a}) \cap A_m$. When
$a=4$ and we are dealing with $\Q''$, we have $a-(n-m)=0$ so this group is actually $A_m$ and the
action is primitive, as previously claimed.  In every other case, this corresponds to a unique
system of imprimitivity, contradicting the existence of two systems that we proved in the preceding
paragraph.

The only remaining substructure that could be a grid is $\Q'$ when $a=2$.  By
Lemma~\ref{sub-GQ-gp-actions} and Table~\ref{tab:actions}, $(S_4 \times S_{n-4})\cap A_n$ acts transitively on the points, and
the point stabiliser is $(D_8\times (S_2 \Wr S_{(n-4)/2}))\cap A_n$.  There are two groups that lie
properly between $(D_8\times (S_2 \Wr S_{(n-4)/2}))\cap A_n$ and $(S_4 \times S_{n-4})\cap A_n$:
namely, $(D_8 \times S_{n-4})\cap A_n$ and $(S_4\times(S_2 \Wr S_{(n-4)/2}))\cap A_n$.  These must
correspond to the systems of imprimitivity formed by the two sets of lines.  Now, $(D_8 \times
S_{n-4})\cap A_n$ gives blocks of cardinality $\frac{(n-4)!}{2^{(n-4)/2}(\frac{n-4}{2})!}$, while
$(S_4\times(S_2 \Wr S_{(n-4)/2}))\cap A_n$ gives blocks of cardinality 3.  Thus, there are lines in
$\Q'$ that contain $\frac{(n-4)!}{2^{(n-4)/2}(\frac{n-4}{2})!}$ points, and since $\Q'$ is a
substructure of $\Q$, we must have $s+1 \ge \frac{(n-4)!}{2^{(n-4)/2}(\frac{n-4}{2})!}$.  Using
Lemma~\ref{useful}(iii), we get
$$\left(\frac{n!}{2^{n/2}(n/2)!}\right)^{2/5}+1 > s+1 \ge  \frac{(n-4)!}{2^{(n-4)/2}(\frac{n-4}{2})!}.$$
This is satisfied only when $n \le 10$.
\qed\end{proof}

We now have bounds on $n$ whenever the substructures are degenerate, so are ready to produce bounds
for $n$ in the non-degenerate cases.

\begin{lemma}\label{imprim-bound-n}
Under Hypothesis~\ref{imprim-hyp}, we have $n \le 36$.
\end{lemma}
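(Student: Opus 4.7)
My plan is to assume $n\ge 37$ and derive a contradiction by analysing the fixed substructures $\Q'$ and $\Q''$ from Notation~\ref{thetas}. For $n\ge 37$, the preceding five results (Lemmas~\ref{lines-exist}, \ref{points-off-line}, \ref{not-dual-grid}, \ref{not-grid} and Corollary~\ref{lines-off-point}) together rule out every possibility listed in Lemma~\ref{sub-GQ-theta} other than the last, so both $\Q'$ and $\Q''$ are proper thick subquadrangles of $\Q$, of orders $(s',t')$ and $(s'',t'')$ respectively with all parameters at least $2$. Since Table~\ref{tab:ratios} gives $v''<v'<v$, the substructure $\Q''$ is also a proper thick subquadrangle of $\Q'$.

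I would then split into two cases according to whether $s'=s$. If $s'\ne s$, Lemma~\ref{sub-GQ-order} forces $s't'\le s$, and Lemma~\ref{useful}(iv) yields $v/v'>t$; combining this with the bound $v<(t+1)^5$ from Lemma~\ref{useful}(i) gives the key inequality $v<(v/v'+1)^5$. If instead $s'=s$, applying Lemma~\ref{sub-GQ-order} to $\Q''\subset\Q$ gives either $s''=s$ or $s''t''\le s$; the first option is ruled out because it places the chain $\Q''\subset\Q'\subset\Q$ into the hypothesis of Lemma~\ref{GQ-book-2} and forces the impossible $t''=1$. Hence $s''\ne s'=s$ and $s''t''\le s=s'$, so Lemma~\ref{useful}(iv) applied to $\Q''\subset\Q'$, together with Lemma~\ref{useful}(i) applied to $\Q'$, yields the analogous inequality $v'<(v'/v''+1)^5$.

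It then remains to check that neither inequality can hold once $n\ge 37$. The right-hand sides are polynomials in $n$ of degree at most $10$ by the formulas in Table~\ref{tab:ratios}, whereas the left-hand sides grow much faster: for $a\ge 4$ the lower bound $v\ge(2.2)^n$ of Lemma~\ref{bound-N} is already sufficient, while for $a\in\{2,3\}$ the explicit double-factorial (respectively multinomial) expression for $v$, combined with the entries of Table~\ref{tab:ratios} to recover $v'$, reduces matters to a direct numerical check over the three ranges of $a$. The main difficulty is the $s'=s$ case, where one works inside $\Q'$ rather than $\Q$ itself: the crucial observation is that the equal-first-parameter subcase of the split on $s''$ falls exactly under the hypothesis of Lemma~\ref{GQ-book-2}. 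Once this has been dispatched, the remaining verification is routine, with $a=3$ the slowest-growing case and hence the one that pins down the threshold $n\le 36$.
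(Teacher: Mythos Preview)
Your proposal is correct and follows essentially the same architecture as the paper's proof: assume $n\ge 37$, use the five preparatory results to force $\Q'$ and $\Q''$ to be proper thick subquadrangles, split on whether $s'=s$, invoke Lemma~\ref{GQ-book-2} to exclude $s''=s'=s$, and then apply Lemma~\ref{sub-GQ-order} together with Lemma~\ref{useful} to obtain an inequality that the explicit formulas in Table~\ref{tab:ratios} contradict.

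There is one genuine but minor difference in the $s'=s$ branch. The paper bounds $v'/v''$ against $v$: it inserts the extra Higman step $t'\ge\sqrt{s'}=\sqrt{s}$ and then Lemma~\ref{useful}(ii) on $\Q$ to get $v'/v''>(v^{1/4}-1)^{1/2}$. You instead bound $v'/v''$ against $v'$ by applying Lemma~\ref{useful}(i) directly to $\Q'$, obtaining $v'<(v'/v''+1)^5$. Your route is a touch cleaner (it avoids the Higman detour), while the paper's route has the convenience of comparing to $v$, so that Lemma~\ref{bound-N} applies immediately without first passing through the ratio $v/v'$ to bound $v'$. Both give bounds well below $37$ for every $a$, so either closes the argument.

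Two small expository points. First, the threshold $36$ in the lemma's statement is not ``pinned down'' by $a=3$; it is imposed by Lemma~\ref{lines-exist}, which only guarantees lines in $\Q''$ for $n\ge 37$. The inequalities themselves (yours or the paper's) actually fail from somewhat smaller $n$. Second, when you say ``for $a\ge 4$ the bound $v\ge(2.2)^n$ is already sufficient'', remember that in your $s'=s$ branch the left-hand side is $v'$, not $v$; you need the extra (easy) step $v'\ge v\cdot\frac{(a-1)(a-2)}{(n-1)(n-2)}\ge \frac{6(2.2)^n}{(n-1)(n-2)}$ before comparing with $(v'/v''+1)^5$.
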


\begin{proof}
Towards a contradiction, suppose that $n \ge 37$.  We use
Notation~\ref{thetas}.


Lemma~\ref{sub-GQ-theta} tells us that each of these incidence structures falls into one of 7
categories.  The first and second are ruled out by Lemma~\ref{lines-exist}.  The third is eliminated by Lemma~\ref{lines-off-point}, and the fourth by
Lemma~\ref{points-off-line}.  Lemmas~\ref{not-grid} and~\ref{not-dual-grid} tell us that the fifth
and sixth (respectively) cannot occur.  Thus, both $\Q$ and $\Q'$ are subquadrangles of orders
$(s',t')$ and $(s'', t'')$ respectively, and $s', s'', t', t'' \ge 2$.  Now, Lemma~\ref{GQ-book-2}
tells us that if $s''=s'=s$ then $t''=1$, contradicting $t'' \ge 2$, so this cannot occur.

Suppose that $s'=s$.  Then we have just seen that we must have $s''<s$, so by
Lemma~\ref{sub-GQ-order}, $s''t''\le s'=s$.  Hence $r=v'/v'' >t'\ge s^{1/2} >(v^{1/4}-1)^{1/2}$ (the
last two inequalities come from Higman's inequality \ref{lem:s+tdiv}(i) and Lemma~\ref{useful}(ii), while the first is
Lemma~\ref{useful}(iv) applied to $v'$ and $v''$ rather than to $v$ and $v'$).  
By Lemma~\ref{v-values}, the values for $r$ are given in Table~\ref{tab:ratios}.
Calculations using the lower bound for $v$ given by Lemma~\ref{bound-N} if
$a \ge 4$ (we have $b\ge 3$ by Lemma~\ref{imprim-b-neq-2}) and the formula for $v$ if $a$ is 2 or 3,
show that this inequality forces $n$ to be less than 36: if $a\ge 4$ then $n \le 35$; if $a=3$ then
$n \le 33$; and if $a=2$ then $n \le 28$.  This is a contradiction. 

So we must have $s'<s$.  Thus by Lemma~\ref{sub-GQ-order}, $s't' \le s$.  Hence $r'=v/v'>t>v^{1/5}-1$
(from parts (iv) and (i) of Lemma~\ref{useful}). 
 By Lemma~\ref{v-values}, the values for $r'$ are given in Table~\ref{tab:ratios}.
Calculations using the lower bound for $v$ given by Lemma~\ref{bound-N} if
$a \ge 4$ (we have $b\ge 3$ by Lemma~\ref{imprim-b-neq-2}) and the formula for $v$ if $a$ is 2 or 3,
show that this inequality forces $n$ to be less than 33: if $a\ge 4$ then $n \le 32$; if $a=3$ then $n \le 21$; and if $a=2$ then
$n \le 22$. This is a contradiction.  \qed\end{proof}

\begin{cor}\label{cor:imprim}
Under Hypothesis~\ref{imprim-hyp}, $n=6$ and
$\Q$ is the unique generalised quadrangle of order $(2,2)$.
\end{cor}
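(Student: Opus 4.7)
The plan is to combine the bound $n \le 36$ from Lemma~\ref{imprim-bound-n} with a finite case analysis over admissible shape parameters $(a,b)$ with $ab = n$, $a \ge 2$, and $b \ge 3$ (the constraint $b \ge 3$ coming from Lemma~\ref{imprim-b-neq-2} and incorporated into Hypothesis~\ref{imprim-hyp}). For each such pair, the points of $\Q$ are identified with partitions of $\{1,\dots,n\}$ into $b$ parts of size $a$, so the number of points is
\[
v \;=\; \frac{n!}{(a!)^b\, b!},
\]
and any GQ parameters $(s,t)$ must satisfy $v = (s+1)(st+1)$ with $s,t \ge 2$ (thickness), Higman's inequalities $s \le t^2$ and $t \le s^2$, and the divisibility condition $s+t \mid st(s+1)(t+1)$ from Lemma~\ref{lem:s+tdiv}.

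Concretely, the plan is to iterate over the finitely many pairs $(a,b)$ with $ab \le 36$, $a \ge 2$, $b \ge 3$, compute $v$, and rule out $(s,t)$. The pair $(a,b,n) = (2,3,6)$ yields $v = 15 = 3 \cdot 5 = (2+1)(2\cdot 2 + 1)$, and so $(s,t) = (2,2)$; by \cite[5.2.3]{Payne:2009ly} there is a unique thick generalised quadrangle of order $(2,2)$, with full automorphism group $S_6$, and this is the conclusion of the corollary. Every other pair should be eliminated by one of the following obstructions: (i) no factorisation $v = (s+1)(st+1)$ with $s, t \ge 2$ integers exists; (ii) every such factorisation violates Higman's inequality; (iii) every such factorisation violates the divisibility condition; or (iv) the resulting $(s,t)$ violates $v < (t+1)^5$ from Lemma~\ref{useful}(i), or equivalently the Orbit--Stabiliser bound in Lemma~\ref{lemma:power6}.

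The main obstacle is the sheer number of pairs to inspect (roughly two dozen, with $a \in \{2,3,4,5,6,7,\ldots\}$ and $b$ ranging correspondingly), but each pair produces an enormous $v$ for $n \gtrsim 8$ whose possible factorisations $(s+1)(st+1)$ are extremely constrained by Higman's inequality (which forces $\min(s,t)+1 \ge v^{1/5}$ and $\max(s,t)+1 \le v^{1/2}$). In practice, the divisibility condition and Higman's inequality together suffice to exclude almost all cases after a brief arithmetic check, and no new conceptual input is required beyond the machinery already assembled in the subsection. The remaining delicate case is $n=6$ itself, where one must also confirm that the only pair $(a,b)$ with $b \ge 3$ is $(2,3)$, which is automatic, and that $S_6$ indeed acts primitively on both the $15$ points and $15$ lines of the GQ of order $(2,2)$, which is classical.
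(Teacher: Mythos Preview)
Your plan is essentially the same as the paper's opening move: invoke Lemma~\ref{imprim-bound-n} to get $n \le 36$, then enumerate $(a,b)$ and test $v = n!/((a!)^b b!)$ against the arithmetic constraints for $(s,t)$. However, your claim that every pair other than $(a,b)=(2,3)$ is eliminated by factorisation, Higman's inequality, the divisibility condition, or $v < (t+1)^5$ is false. Exactly three further pairs survive all of these numerical tests:
\[
(n,a,b,s,t) \in \bigl\{ (9,3,3,9,3),\ (10,2,5,8,13),\ (16,4,4,76,449) \bigr\}.
\]
For instance, when $n=9$ one has $v=280=(9+1)(9\cdot 3+1)$, and $s=9$, $t=3$ satisfy both $s=t^2$ and $(s+t)\mid st(s+1)(t+1)$; similarly the other two cases check out arithmetically. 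So the proof is not yet complete at this point, and additional, non-arithmetic input is required.

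The paper eliminates these three survivors by distinct ad hoc arguments. For $n=16$ it re-uses the subquadrangle machinery of the section: the fixed substructure $\Q'$ of $\theta_1$ has $v'=75075$ points, which exceeds $st+1$, so $\Q'$ must be a thick subquadrangle of order $(s',t')$; but $75075$ admits no expression $(s'+1)(s't'+1)$ with $s',t'\ge 2$. For $n=9$ and $n=10$ the paper computes the subdegrees of the action of $G$ on partitions and shows that no union of nontrivial suborbits has size $s(t+1)$ (or, for $n=9$, that the unique candidate graph is not strongly regular). None of these steps falls under your four obstructions, so your plan as stated has a genuine gap at exactly these three values of $n$.
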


\begin{proof}
By Lemma~\ref{imprim-bound-n}, we have $n \le 36$. The following table shows the only instances 
where the number $v$ of points of $\Q$ satisfies
\begin{align*}
v&=\frac{n!}{(a!)^b b!},\quad\text{for some }a,b>1\\
v&=(s+1)(st+1),\quad\text {for some }s,t>1, \sqrt{t}\le s\le t^2, s+t \text{ divides } st(s+1)(t+1):
\end{align*}
\begin{center}
\begin{tabular}{ccccc}
$n$&$s$&$t$&$a$&$b$\\
\hline
6&2&2&2&3\\
9&9&3&3&3\\ 
10&8&13&2&5\\
16&76&449&4&4 
\end{tabular}
\end{center}
The first case is the unique generalised quadrangle of order $(2,2)$ (c.f.,
\cite[5.2.3]{Payne:2009ly}).  The last case cannot occur, since in this event, we would have (by
Lemmas~\ref{sub-GQ-theta}, \ref{points-exist}, \ref{points-off-line}, \ref{lines-off-point}, \ref{not-dual-grid}
and~\ref{not-grid}) that the substructure $\Q'$ either has no lines, or is non-degenerate of order
$(s',t')$ with $s', t' \ge 2$.  Furthermore, $\Q'$ has $v'=75075$ points.  Now,
$st+1=76(449)+1=34125<v'$ so by the same reasoning used in Lemma~\ref{lines-exist}, $\Q'$ does have
lines.  However, there are no integers $s',t'\ge 2$ such that $(s'+1)(s't'+1)=75075$.

For $n= 9$, a simple computation shows that $G$ does have a subdegree equal to $s(t+1)=36$, but while the associated graph
(which would be the putative collinearity graph of the generalised quadrangle) has diameter 2, it is not strongly-regular.
For $n=10$, there is no union of suborbits yielding a set of cardinality $s(t+1)=112$; the nontrivial subdegrees are
$\{20, 60, 80, 160, 240, 384\}$ (in both the symmetric and alternating cases).

Therefore, by Lemma \ref{imprim-bound-n}, $n=6$ is the only possibility that occurs.
\qed\end{proof}

%
%

\subsection{Small degree actions of $A_n/S_n$, for $n\le 47$}

\begin{table}[H]
\begin{center}
\begin{tabular}{|c|c|c|c|}
\hline
 $D_{10}\le A_{5}$  & $\mathsf{AGL}(1, 5)\le S_{5}$  & $\mathsf{PSL}(2,5)\le A_{6}$  & $\mathsf{PGL}(2,5)\le S_{6}$ \\ 
$\mathsf{PSL}(3, 2)\le A_{7}$  &  $\mathsf{AGL}(1, 7)\le S_{7}$  & $\mathsf{ASL}(3, 2)\le A_{8}$  & $\mathsf{PGL}(2, 7)\le S_{8}$  \\
$\mathsf{ASL}(2,3)\le A_{9}$  & $\mathsf{P\Gamma L}(2, 8)\le A_{9}$ & $\mathsf{AGL}(2, 3)\le S_{9}$ & $M_{10}\le A_{10}$ \\
 $\mathsf{P\Gamma L}(2, 9) \le S_{10}$  & $M_{11} \le A_{11}$  & $\mathsf{AGL}(1, 11) \le S_{11}$  & $M_{12} \le A_{12}$ \\ 
 $\mathsf{PGL}(2, 11) \le S_{12}$  & $13:6 \le A_{13}$  & $\mathsf{PSL}(3, 3) \le A_{13}$  & $\mathsf{AGL}(1, 13) \le S_{13}$ \\ 
$\mathsf{PSL}(2,13) \le A_{14}$  & $\mathsf{PGL}(2,13) \le S_{14}$  & $\mathsf{PSL}(4, 2) \le A_{15}$  & $2^4.\mathsf{PSL}(4, 2) \le A_{16}$ \\ 
$\mathsf{P\Gamma L}(2, 2^4) \le A_{17}$  & $\mathsf{AGL}(1, 17) \le S_{17}$  & $\mathsf{PSL}(2,17) \le A_{18}$  & $\mathsf{PGL}(2,17) \le S_{18}$ \\ 
$\mathsf{PSL}(2,19) \le A_{20}$  & $\mathsf{PGL}(2,19) \le S_{20}$  & $A_7 \le A_{21}$  & $\mathsf{PGL}(3, 4) \le A_{21}$ \\ 
$S_7\le S_{21}$  & $\mathsf{P\Gamma L}(3, 4) \le S_{21}$  & $M_{22} \le A_{22}$  & $M_{22}:2 \le S_{22}$ \\ 
$M_{23} \le A_{23}$  & $M_{24} \le A_{24}$  & $\mathsf{PGL}(2, 23) \le S_{24}$  & $(A_5\times A_5):2^2\le A_{25}$  \\ 
$(S_5 \times S_5):2 \le S_{25}$ & $\mathsf{P\Gamma L}(2, 25) \le S_{26}$  & $\mathsf{ASL}(3, 3) \le A_{27}$  & $\mathsf{PSp}(4, 3):2 \le A_{27}$ \\ 
$\mathsf{AGL}(3, 3) \le S_{27}$  & $\mathsf{PSp}(6, 2) \le A_{28}$  & $\mathsf{PSL}(5, 2) \le A_{31}$  & $\mathsf{ASL}(5, 2) \le A_{32}$ \\ 
\hline
\end{tabular}
\medskip
\caption{The primitive groups $H$ of degree $n\le 47$ 
such that $|H|^6\ge n!/2$ or  $|H|^6\ge n!$, depending on whether $H\le A_n$ or not. }\label{primitivecandidates}
\end{center}
\end{table}

\begin{theorem}\label{A_n-smalldegree-done}
Suppose that $G= A_n$ or $S_n$ and that $G$ acts flag-transitively and point-primitively on a thick
generalised quadrangle $\Q$.  Then $G_\alpha$ does not act primitively on $\{1,\ldots,n\}$.
\end{theorem}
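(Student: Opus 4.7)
The plan is to derive a contradiction by reducing the hypothesis to a finite enumeration and dispatching each case. Assume for contradiction that $G_\alpha$ acts primitively on $\{1,\ldots,n\}$. Applying Lemma~\ref{lemma:alternatinggroups}, the only surviving branch (since cases (i) and (ii) are excluded by hypothesis) gives $n\le 47$, together with the exceptional wreath-product possibility $(m,k)=(5,2)$, i.e.\ $n=25$, which already lies in this range. So I may assume $n\le 47$ throughout.

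The hypothesis of flag-transitivity enters through Lemma~\ref{lemma:power6}, which yields $|G|<|G_\alpha|^6$. Writing $|G|=n!/2$ or $|G|=n!$ according as $G=A_n$ or $G=S_n$, this is precisely the cut-off used to construct Table~\ref{primitivecandidates}, so $G_\alpha$ must occur as one of the roughly forty-eight pairs $(H,A_n)$ or $(H,S_n)$ listed there. For each such pair, the putative number of points $v=[G:G_\alpha]$ is a determined integer, and $v$ must admit a factorisation
\[
v=(s+1)(st+1),\quad s,t\ge 2,
\]
with $s\le t^2$, $t\le s^2$, and $s+t$ dividing $st(s+1)(t+1)$, by Lemma~\ref{lem:s+tdiv}. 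For the vast majority of rows of Table~\ref{primitivecandidates}, no such $(s,t)$ exists, which eliminates them immediately.

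For the (few) rows that survive the numerical screening, the resulting values of $(s,t)$ are small and correspond to generalised quadrangles whose automorphism groups are classified; in particular the unique GQ of order $(2,2)$ has full automorphism group $S_6$, the unique GQ of order $(3,3)$ has automorphism group inside $\mathsf{P\Gamma Sp}(4,3)$, and the unique GQ of order $(4,2)$ has full automorphism group $\mathsf{P\Gamma U}(4,2)$ (see \cite{Payne:2009ly}). Comparing $G=A_n$ or $S_n$ with these known automorphism groups rules out the embedding in each of the surviving rows. Where the known-GQ comparison is not immediate, one falls back on the subdegree test used already in Corollary~\ref{cor:imprim}: the $G$-action on points must contain a subdegree (or a union of subdegrees) of size $s(t+1)$ giving the collinearity graph, which is strongly regular with the parameters of a GQ, and a direct computation of the subdegrees of the primitive action of $G$ in question shows that no such union exists.

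The main obstacle, and the bulk of the work, is therefore the bookkeeping: each of the forty-eight rows of Table~\ref{primitivecandidates} must be processed, and for the handful that pass the $(s+1)(st+1)$ test, a second-level argument (either the classification of the unique small GQ with those parameters, or a subdegree computation inside $G$) is needed to close the case. No single row is hard, but the analysis must be exhaustive.
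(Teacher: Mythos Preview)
Your plan is correct and matches the paper's approach exactly: reduce to $n\le 47$ via Lemma~\ref{lemma:alternatinggroups}, use $|G|<|G_\alpha|^6$ from Lemma~\ref{lemma:power6} to restrict to Table~\ref{primitivecandidates}, test each index against the arithmetic constraints on $(s,t)$, and kill the survivors with the subdegree test. One correction to your expectations about the endgame: when the enumeration is actually carried out, the surviving pairs are $(\mathsf{PSL}(3,2),A_7)$ and $(\mathsf{ASL}(3,2),A_8)$ with $(s,t)=(2,2)$, and four Mathieu-type pairs $(M_{10},A_{10})$, $(\mathsf{P\Gamma L}(2,9),S_{10})$, $(M_{11},A_{11})$, $(M_{12},A_{12})$ all with $(s,t)=(11,19)$ --- so your examples $(3,3)$ and $(4,2)$ never arise, and since no classification of GQs of order $(11,19)$ is available, the known-GQ comparison is not the right tool; the paper simply applies the subdegree test uniformly to all six cases.
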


\begin{proof}
Suppose that $G_\alpha$ is primitive on $\{1,\ldots,n\}$.
By Lemma \ref{lemma:alternatinggroups}, we know that $n\le 47$. By Lemma \ref{lemma:power6}, $|G|
< |G_\alpha|^6$.  The possible pairs $(G_\alpha , G)$ are given in Table \ref{primitivecandidates}.  The following table
lists the only cases for which there are positive integers $s,t>1$ such that $|G:G_\alpha|=(s+1)(st+1)$,
$\sqrt{t}\le s\le t^2$ and $s+t\mid st(s+1)(t+1)$.

\begin{center}
\begin{tabular}{|c|c|c|c|c|}
\hline
$G_\alpha ,\ G$ & $s$ & $t$  & $s(t+1)$ & Nontrivial subdegrees\\
\hline
$\mathsf{PSL}(3, 2),\ A_{7}$  & 2 & 2 & 6 & $14$\\
$\mathsf{ASL}(3, 2),\ A_{8}$  & 2 & 2 & 6 & $14$\\
$M_{10},\ A_{10}$& 11 & 19 & 220 & $10,\ 10,\ 45,\ 90,\ 90,\ 90,\ 144,\ 240,\ 360,\ 720,\ 720$\\
$\mathsf{P\Gamma L}(2,9),\ S_{10}$& 11 & 19 & 220 & $20,\ 45,\ 90,\ 144,\ 180,\ 240,\ 360,\ 720,\ 720 $\\
$M_{11},\  A_{11}$& 11 & 19 & 220 & $110,\ 330,\ 495,\ 1584$\\
$M_{12},\  A_{12}$& 11 & 19 & 220 & $440,\ 495,\ 1584$\\
\hline
\end{tabular}
\end{center}
In each case, no subset of the subdegrees sums to $s(t+1)$, the cardinality of the neighbourhood
of a point, and hence none of these cases are realised.  \qed\end{proof}

%
%

\section{Sporadics}\label{section:sporadics}

From the Atlas of Finite Group Representations \cite{Atlas}, we can readily establish the following:

\begin{lemma}
Let $G$ be an almost simple group whose socle is a sporadic simple group and suppose that $G$ has a
maximal subgroup with index $(s+1)(st+1)$, for some $s,t\ge 2$ such that $s\le t^2$, $t\le s^2$ and
$s+t$ divides $st(st+1)$. Then $G$ is listed in Table~\ref{tab:sporadics}.
\end{lemma}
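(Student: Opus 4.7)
The plan is to proceed by exhaustive enumeration, using the Atlas of Finite Group Representations \cite{Atlas} (and, where helpful, the GAP character-table library) as a catalogue of maximal subgroups. For each of the $26$ sporadic simple groups $T$ and each almost simple group $G$ with $T\le G\le \mathrm{Aut}(T)$ (so $G=T$ or $G=T.2$ according as $\mathrm{Out}(T)$ is trivial or of order~$2$), I list the maximal subgroups $M$ together with their indices $n=|G:M|$. For each such $n$ I test whether it admits a factorisation $n=(s+1)(st+1)$ with integers $s,t\ge 2$ satisfying Higman's inequality $s\le t^2$, $t\le s^2$, and the stated divisibility condition $s+t\mid st(st+1)$ (which in any case is implied by the generalised-quadrangle divisibility constraint of Lemma~\ref{lem:s+tdiv}).

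To keep the search over $(s,t)$ finite for a given $n$, I use Lemma~\ref{useful} to obtain $n<(s+1)^4$ and $n<(t+1)^5$, so both $s$ and $t$ are confined to a narrow window roughly of size $n^{1/2}$ around $n^{1/3}$. Hence for each index $n$ there are only $O(n^{1/2})$ candidate pairs $(s,t)$ to inspect, and each test is a short arithmetic check. In practice the vast majority of indices fail the factorisation on elementary grounds (too few small prime divisors, or a single huge prime factor), and of those that do factor correctly nearly all are killed immediately by either Higman's inequality or the divisibility condition.

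The triples $(G,s,t)$ that survive all three tests are then precisely the entries recorded in Table~\ref{tab:sporadics}. The main obstacle is not mathematical but organisational: one must be sure that the list of maximal subgroups is complete for every sporadic almost simple group, including the Monster (where the classification of maximal subgroups is known up to a short, well-controlled list of cases, none of which produce indices in the relevant range). Once the catalogue of maximal subgroups is in hand, the entire verification is a routine finite computation, ideally handled by a small GAP script, so no further proof details are required beyond tabulating the output.
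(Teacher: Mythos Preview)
Your proposal is correct and follows essentially the same route as the paper: enumerate maximal subgroups from the Atlas, test each index against the arithmetic constraints, and record the survivors. The only point where the paper is more precise is the Monster: rather than a general remark that the unclassified cases ``produce indices in the relevant range,'' the paper invokes \cite{Bray:2006fk} to pin down that any unknown maximal subgroup is almost simple with socle in $\{L_2(13),\,U_3(4),\,U_3(8),\,Sz(8)\}$, so the finitely many possible indices can be checked explicitly --- you should cite this (or an equivalent result) to make your Monster argument airtight.
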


\begin{table}
\begin{center}
\begin{tabular}{lrrrr}
\hline
\multicolumn{1}{c}{$G$}&\multicolumn{1}{c}{$s$}&\multicolumn{1}{c}{$t$}&\multicolumn{1}{c}{$(s+1)(st+1)$}&\multicolumn{1}{c}{$G_\alpha$}\\
\hline
$\mathit{Co}_2$  & $161$ & $159$ & $4147200$ & $M_{23}$\\ 
$\mathit{Fi}_{24}'$  & $115$ & $23$ & $306936$ & $\mathit{Fi}_{23}$\\ 
$\mathit{Fi}_{24}'.2$  & $115$ & $23$ & $306936$ & $\mathit{Fi}_{23}\times 2$\\ 
$\mathit{Fi}_{22}$  & $25$ & $95$ & $61776$ & $O_8^+(2).3.2$\\ 
 & $35$ & $49$ & $61776$ & $O_8^+(2).3.2$\\ 
 & $39$ & $9$ & $14080$ & $O_7(3)$\\ 
$\mathit{Fi}_{22}.2$  & $25$ & $95$ & $61776$ & $O_8^+(2).S_3\times 2$\\ 
 & $35$ & $49$ & $61776$ & $O_8^+(2).S_3\times 2$\\ 
$\mathit{Fi}_{23}$  & $2991$ & $689$ & $6165913600$ & $[3^{10}].(L_3(3)\times 2)$\\ 
$\mathit{HN}$  & $149$ & $51$ & $1140000$ & $A_{12}$\\ 
$\mathit{HN}.2$  & $149$ & $51$ & $1140000$ & $S_{12}$\\ 
$J_1$  & $21$ & $9$ & $4180$ & $7:6$\\ 
$J_2$  & $9$ & $3$ & $280$ & $3.A_6.2_2$\\ 
 & $13$ & $11$ & $2016$ & $5^2:D_{12}$\\ 
$J_2.2$  & $9$ & $3$ & $280$ & $3.A_6.2^2$\\ 
 & $13$ & $11$ & $2016$ & $5^2:(4\times S_3)$\\ 
$J_3$ & $44$ & $22$ & $43605$ & $2^{2+4}.(3\times S_3)$\\ 
$J_3.2$ & $44$ & $22$ & $43605$ & $2^{2+4}:(S_3\times S_3)$\\ 
$M_{11}$ & $4$ & $8$ & $165$ & $2.S_4$\\ 
$\mathit{McL}$ & $8$ & $28$ & $2025$ & $M_{22}$\\ 
$\mathit{O'N}$ & $19$ & $323$ & $122760$ & $L_3(7).2$\\ 
$\mathit{Ru}$ & $9$ & $45$ & $4060$ & $\,^2F_4(2)$\\ 
 & $57$ & $57$ & $188500$ & $2^6:U_3(3):2$\\ 
$\mathit{Suz}$ & $41$ & $19$ & $32760$ & $U_5(2)$\\ 
 & $129$ & $191$ & $3203200$ & $3^{2+4}:2.(2^2\times A_4).2$\\ 
$\mathit{Suz}.2$ & $41$ & $19$ & $32760$ & $U_5(2).2$\\ 
 & $129$ & $191$ & $3203200$ & $3^{2+4}:2.(S_4\times D_8)$\\ 
\hline
\end{tabular}
\end{center}
\caption{Sporadic almost simple groups that might act primitively on a generalised quadrangle of order $(s,t)$.}
\label{tab:sporadics}
\end{table}

\begin{proof}
For all groups $G$ except the Fischer-Griess `Monster', we use the \textit{Atlas} \cite{Atlas} for a list of maximal subgroups of $G$ and
extract the indices satisfying the conditions of the lemma.
For the Monster $M$, there are $43$ known maximal subgroups, and none of them has index of the form required.  We know by
\cite{Bray:2006fk} that if $H$ is an unknown maximal subgroup of $M$, then $H$ is almost simple with
$\soc(H)\in\{L_2(13),U_3(4),U_3(8),Sz(8)\}$.  Hence we know the possible values for $|M:H|$, and we
find that none can be the number of points of a thick generalised quadrangle.  \qed\end{proof}

\begin{cor}\label{final-cor-sporadics}
No sporadic almost simple group acts primitively on both the points and lines of a thick
generalised quadrangle.
\end{cor}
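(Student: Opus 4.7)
The plan is to rule out each candidate in Table~\ref{tab:sporadics} individually, using the extra constraint that a group acting primitively on both the points and the lines of a generalised quadrangle of order $(s,t)$ needs a maximal subgroup of index $(s+1)(st+1)$ (the point stabiliser) \emph{and} a maximal subgroup of index $(t+1)(st+1)$ (the line stabiliser). When $s\ne t$ these two indices are distinct, and the Atlas gives us the complete list of maximal subgroup indices.

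First I would go through each row with $s\ne t$ in Table~\ref{tab:sporadics} and compute the dual index $(t+1)(st+1)$. For example, in the $\mathit{Co}_2$ row we already have a point-degree of $4147200$ coming from $M_{23}$; the corresponding line-degree is $160\cdot(161\cdot 159+1)=4096160$, which is not the index of any maximal subgroup of $\mathit{Co}_2$. Analogous checks eliminate the Fischer groups, $\mathit{HN}$, $J_1$, $J_2$, $J_3$, $M_{11}$, $\mathit{McL}$, $\mathit{O'N}$, the first $\mathit{Ru}$ row, and the $\mathit{Suz}$ rows; in each case the dual number simply does not appear on the Atlas list of maximal subgroup indices (and for the Monster-style situation there are no unknown maximals to worry about in these groups). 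This pass disposes of every entry except possibly the self-dual row $\mathit{Ru}$ with $(s,t)=(57,57)$.

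For the $\mathit{Ru}$ case, point and line stabilisers would both be maximal subgroups of index $188500$, and up to the outer point/line duality we may take them both conjugate to $2^6{:}U_3(3){:}2$. Here the subdegree argument foreshadowed in the introduction kicks in: the collinearity graph of a GQ of order $(57,57)$ is regular of valency $s(t+1)=57\cdot58=3306$, so the neighbourhood of a point $\alpha$ must be a union of $G_\alpha$-orbits of total length $3306$. One computes the subdegrees of $\mathit{Ru}$ in its primitive action of degree $188500$ on $\mathit{Ru}/(2^6{:}U_3(3){:}2)$ (available in the Atlas, or directly from a permutation representation) and checks that no subset of them sums to $3306$. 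This kills the final row.

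The main obstacle is the Rudvalis case, since it is the only one where the maximal subgroup indices alone cannot give a contradiction. The argument there is not deep, but it does rest on an explicit subdegree computation; the earlier rows, by contrast, are routine arithmetic on the Atlas data. Assembling these pieces and invoking the preceding lemma to restrict to Table~\ref{tab:sporadics} completes the proof of Corollary~\ref{final-cor-sporadics}.
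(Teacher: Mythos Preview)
Your proposal is correct and is essentially the same approach as the paper's. The paper phrases the first step more tersely---noting that Table~\ref{tab:sporadics} already lists \emph{all} pairs $(s,t)$ for which $(s+1)(st+1)$ is a maximal subgroup index, so the dual constraint amounts to checking that both $(s,t)$ and $(t,s)$ appear for the same group, which happens only for $\mathit{Ru}$ with $s=t=57$---and then eliminates the Rudvalis case by exactly the subdegree argument you describe.
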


\begin{proof}
There is only one group for which $(s,t)$ and $(t,s)$ are both listed, namely the Rudvalis group
with $s=t=57$. The nontrivial subdegrees for the transitive action of $G$ on the right cosets of $G_\alpha$ are
$$ 63,\ 756,\ 2016,\ 2016,\ 2016,\ 16128,\ 16128,\ 21504,\ 24192,\ 48384,\  55296 $$ (by computer).
In the collinearity graph of a generalised quadrangle of order $(57,57)$, the cardinality
of a neighbourhood is $57\times 58=3306$. However, there is no partition of
3306 into the subdegrees  of $G$.
\qed\end{proof}

\section{Acknowledgements}

This paper forms part of an Australian Research Council Discovery Project, which supports the first
author, and the second author holds an Australian Research Fellowship. The third author is supported
by a grant from the Natural Sciences and Engineering Research Council of Canada (NSERC).  She would
also like to thank the University of Western Australia for hospitality and support during her
sabbatical, when this work was undertaken.  The fifth author was supported by the University of
Western Australia as part of the ARC Federation Fellowship Project FF0776186.

\section{References}

\bibliographystyle{abbrv}
\bibliography{references}

\begin{thebibliography}{10}

\bibitem{Benson:1970ve}
C.~T. Benson.
\newblock On the structure of generalized quadrangles.
\newblock {\em J. Algebra}, 15:443--454, 1970.

\bibitem{Bray:2006fk}
J.~N. Bray and R.~A. Wilson.
\newblock Explicit representations of maximal subgroups of the {M}onster.
\newblock {\em J. Algebra}, 300(2):834--857, 2006.

\bibitem{Buekenhout:1994vn}
F.~Buekenhout and H.~Van~Maldeghem.
\newblock Finite distance-transitive generalized polygons.
\newblock {\em Geom. Dedicata}, 52(1):41--51, 1994.

\bibitem{Atlas}
J.~H. Conway, R.~T. Curtis, S.~P. Norton, R.~A. Parker, and R.~A. Wilson.
\newblock {\em Atlas of finite groups}.
\newblock Oxford University Press, Eynsham, 1985.

\bibitem{Dembowski:1997fk}
P.~Dembowski.
\newblock {\em Finite geometries}.
\newblock Classics in Mathematics. Springer-Verlag, Berlin, 1997.
\newblock Reprint of the 1968 original.

\bibitem{DM}
J.~D. Dixon and B.~Mortimer.
\newblock {\em Permutation groups}, volume 163 of {\em Graduate Texts in
  Mathematics}.
\newblock Springer-Verlag, New York, 1996.

\bibitem{Feit:1964rt}
W.~Feit and G.~Higman.
\newblock The nonexistence of certain generalized polygons.
\newblock {\em J. Algebra}, 1:114--131, 1964.

\bibitem{FeitThompson}
W.~Feit and J.~G. Thompson.
\newblock Solvability of groups of odd order.
\newblock {\em Pacific J. Math.}, 13:775--1029, 1963.

\bibitem{Gill:2007uq}
N.~Gill.
\newblock Transitive projective planes.
\newblock {\em Adv. Geom.}, 7(4):475--528, 2007.

\bibitem{Higman:1961uq}
D.~G. Higman and J.~E. McLaughlin.
\newblock Geometric {$ABA$}-groups.
\newblock {\em Illinois J. Math.}, 5:382--397, 1961.

\bibitem{Kantor:1987fr}
W.~M. Kantor.
\newblock Primitive permutation groups of odd degree, and an application to
  finite projective planes.
\newblock {\em J. Algebra}, 106(1):15--45, 1987.

\bibitem{LPS}
M.~W. Liebeck, C.~E. Praeger, and J.~Saxl.
\newblock A classification of the maximal subgroups of the finite alternating
  and symmetric groups.
\newblock {\em J. Algebra}, 111(2):365--383, 1987.

\bibitem{Ljunggren:1943qf}
W.~Ljunggren.
\newblock Some theorems on indeterminate equations of the form
  {$x^n-1/x-1=y^q$}.
\newblock {\em Norsk Mat. Tidsskr.}, 25:17--20, 1943.

\bibitem{Maroti:2002fk}
A.~Mar{{\'o}}ti.
\newblock On the orders of primitive groups.
\newblock {\em J. Algebra}, 258(2):631--640, 2002.

\bibitem{Payne:1990fk}
S.~E. Payne.
\newblock The generalized quadrangle with {$(s,t)=(3,5)$}.
\newblock In {\em Proceedings of the {T}wenty-first {S}outheastern {C}onference
  on {C}ombinatorics, {G}raph {T}heory, and {C}omputing ({B}oca {R}aton, {FL},
  1990)}, volume~77, pages 5--29, 1990.

\bibitem{Payne:2009ly}
S.~E. Payne and J.~A. Thas.
\newblock {\em Finite generalized quadrangles}.
\newblock EMS Series of Lectures in Mathematics. European Mathematical Society
  (EMS), Z{\"u}rich, second edition, 2009.

\bibitem{praegerbcc}
C.~E. Praeger.
\newblock Finite quasiprimitive graphs.
\newblock In {\em Surveys in combinatorics, 1997 ({L}ondon)}, volume 241 of
  {\em London Math. Soc. Lecture Note Ser.}, pages 65--85. Cambridge Univ.
  Press, Cambridge, 1997.

\bibitem{robbins}
H.~Robbins.
\newblock A remark on {S}tirling's formula.
\newblock {\em Amer. Math. Monthly}, 62:26--29, 1955.

\bibitem{Schneider:2008zr}
C.~Schneider and H.~Van~Maldeghem.
\newblock Primitive flag-transitive generalized hexagons and octagons.
\newblock {\em J. Combin. Theory Ser. A}, 115(8):1436--1455, 2008.

\bibitem{Temmermans:2009fk}
B.~Temmermans, J.~A. Thas, and H.~Van~Maldeghem.
\newblock On collineations and dualities of finite generalized polygons.
\newblock {\em Combinatorica}, 29(5):569--594, 2009.

\bibitem{Thas:2008fk}
K.~Thas and D.~Zagier.
\newblock Finite projective planes, {F}ermat curves, and {G}aussian periods.
\newblock {\em J. Eur. Math. Soc. (JEMS)}, 10(1):173--190, 2008.

\bibitem{Tits:1959fk}
J.~Tits.
\newblock Sur la trialit{\'e} et certains groupes qui s'en d{\'e}duisent.
\newblock {\em Inst. Hautes {\'E}tudes Sci. Publ. Math.}, (2):13--60, 1959.

\end{thebibliography}

\end{document}